\def\@seccntformat#1{%
  \protect\textup{\protect\@secnumfont
    \ifnum\pdfstrcmp{subsection}{#1}=0 \bfseries\fi
    \csname the#1\endcsname
    \protect\@secnumpunct
  }%
}  
\title[Fractional Sobolev-Choquard critical systems]{Fractional Sobolev-Choquard critical systems with Hardy term and weighted singularities}
\author[Assun\c{c}\~{a}o]{Ronaldo B. Assun\c{c}\~{a}o}
\address{Ronaldo B. Assun\c{c}\~{a}o \hfill\break\indent
Departamento de Matem\'{a}tica\,---\,
Universidade Federal de Minas Gerais, 
UFMG}
\email{ronaldo@mat.ufmg.br}
\author[Miyagaki]{Ol\'{\i}mpio H. Miyagaki}
\thanks{Ol\'{\i}mpio H. Miyagaki was supported by Grant 2022/16407-1\,---\,S\~{a}o Paulo Research Foundation (FAPESP) and Grant 303256/2022-2\,---\,CNPq/Brazil.}
\address{Ol\'{\i}mpio H. Miyagaki \hfill\break\indent
Departamento de Matem\'{a}tica\,---\,
Universidade Federal de S\~{a}o Carlos, 
UFSCar} 
\email{ohmiyagaki@gmail.com}
\author[Siqueira]{Rafaella F. S. Siqueira}
\thanks{Rafaella F. S. Siqueira was partially supported by CNPq/Brazil.}
\address{Rafaella F. S. Siqueira 
\hfill\break\indent
Departamento de Matem\'{a}tica\,---\,
Centro Federal de Educa\c{c}\~{a}o Tecnol\'{o}gica, 
CEFET}
\email{rafaella.siqueira@cefetmg.br}
\date{Belo Horizonte, \today}
\keywords{%
Systems of fractional $p$-Laplacian operator,
doubly critical singular problem,
variational methods, 
weighted Sobolev and Morrey spaces,
Caffarelli-Kohn-Nirenberg inequality.}
\subjclass[2010]{%
Primary: %
35B33;
35J92; 
35R11.     
Secondary: %
35A23,
35B38, 
35J20. 
}
\newcommand\ie{i.e.,\xspace}
\newtheorem{theorem}{Theorem}
\newtheorem{proposition}[theorem]{Proposition}
\newtheorem{corollary}[theorem]{Corollary}
\newtheorem{lemma}[theorem]{Lemma}
\theoremstyle{definition}
\theoremstyle{remark}
\newtheorem{claim}{Claim}
\newcommand*{\sen}{%
  \mathop{\kern\z@\operator@font sen}\!{}%
}
\DeclareMathOperator*{\loc}{loc}
\begin{document}

\begin{abstract}
    In this paper, we consider a fractional $p$-Laplacian system of equations in the entire space $\mathbb{R}^{N}$ with doubly critical singular nonlinearities involving a local critical Sobolev  term together with a nonlocal Choquard critical term; the problem also includes a homogeneous singular Hardy term; moreover, all the nonlinearities involve singular critical weights. To prove the main result we use a refinement of Sobolev inequality
that is related to Morrey space because our problem involves doubly critical exponentss and a version of the Caffarelli-Kohn-Nirenberg inequality. 
With the help of these results, we provide sufficient conditions under which a weak nontrivial solution to the problem exists via variational methods.
\end{abstract}

\maketitle
\tableofcontents

\section{Introduction and main results}
In the present work, we consider the following fractional $p$-Laplacian system of equations in the entire space $\mathbb{R}^{N}$ featuring doubly critical nonlinearities, involving a local critical Sobolev term together with a nonlocal Choquard critical term; the problem also includes a homogeneous Hardy term; additionally, all terms have critical singular weights.
More precisely, we deal with the problem
\begin{align}
\label{prob:sistemas}
    \begin{cases}
  (-\Delta)^{s}_{p,\theta} u 
   -\gamma_1 \dfrac{|u|^{p-2}u}{|x|^{sp+ \theta}} = \left[
       I_{\mu} \ast F(\cdot, u) 
       \right](x)f(x,u) +
\dfrac{|u|^{p^*_s(\beta,\theta)-2}u} {|x|^{\beta}} 
     + \dfrac{\eta a}{a+b}\dfrac{|u|^{a-2}u|v|^b}{|x|^{\beta}}\\
     (-\Delta)^{s}_{p,\theta} v 
   -\gamma_2 \dfrac{|v|^{p-2}v}{|x|^{sp+ \theta}}
   = \left[
   I_{\mu} \ast F(\cdot, v) 
   \right](x)f(x,v)+\dfrac{|v|^{p^*_s(\beta,\theta)-2}v}{|x|^{\beta}} 
     + \dfrac{\eta b}{a+b}\dfrac{|u|^{a}|v|^{b-2}v}{|x|^{\beta}}
    \end{cases}
\end{align}
where 
$0 < s  < 1$; 
$0 < \alpha, \,\beta < sp + \theta < N$; 
$0 < \mu < N$;
$2\delta + \mu < N$; $\eta \in \mathbb{R}^+$;
$\gamma_{1}, \gamma_{2} < \gamma_{H}$
with the best fractional Hardy constant $\gamma_{H}$ 
to be defined below (without lost of generality, to simplify the notation we can consider the only parameter $\gamma=\gamma_{1}=\gamma_{2}$);
the Hardy-Sobolev and Stein-Weiss upper critical fractional exponents (this latter also called Hardy-Littlewood-Sobolev upper critical exponent) are
respectively defined by
\begin{alignat*}{2}
p^*_s(\beta,\theta)
& = \frac{p(N-\beta)}{N-sp-\theta}
& \qquad\textup{and}\qquad
p^\sharp_s(\delta,\theta,\mu)
& = \frac{p(N-\delta-\mu/2)}{N-sp-\theta};
\end{alignat*}
to simplify the notation, we write  $p^*_s(\beta,\theta)=p^*_s$ and $p^\sharp_s(\delta,\theta,\mu)=p^\sharp_s$, 
additionally $a+b=p_s^*(\beta,\theta)$.
Moreover,
$I_{\mu}(x) =|x|^{-\mu}$ is the Riesz potential of order $\mu$; 
the functions 
$f, 
 F \colon \mathbb{R}^{N} \times \mathbb{R} \to \mathbb{R}$ are respectively defined by
\begin{alignat}{2}
\label{def:ff}
f(x,t)=f_{\delta,\theta,\mu}(x,t)
& = \frac{|t|^{p^{\sharp}_{s}-2}t}%
    {|x|^{\delta}}
& \qquad\textup{and}\qquad
F(x,t)=F_{\delta,\theta,\mu}(x,t)
& = \frac{|t|^{p^{\sharp}_{s}}}%
         {|x|^{\delta}},
\end{alignat}
that is, $F_{\delta,\theta,\mu}(x,t)=p^{\sharp}_{s}
\int_{0}^{|t|}
f_{\delta,\theta,\mu}(x,\tau)\dd{\tau}$;
and the term with convolution integral, 
\begin{align*}
[I_{\mu}\ast F_{\delta,\theta,\mu}(\cdot,u)](x)
& \coloneqq \int_{\mathbb{R}^{N}} \dfrac{|u(y)|^{p^\sharp_s}}{|x-y|^{\mu}|y|^{\delta}}\dd{y},
\end{align*}
is known as Choquard type nonlinearity.

Let us now introduce the spaces of functions that are meaningful to our considerations. Throughout this work, 
we denote the norm of the weighted Lebesgue space
$L^{p}(\mathbb{R}^{N},|x|^{-\lambda})$ by
\begin{align*}
\|u\|_{L^{p}(\mathbb{R}^{N};|x|^{-\lambda})}
& \coloneqq
 \Bigl(
\int_{\mathbb{R}^{N}} \dfrac{|u|^{p}}{|x|^{\lambda}}\dd{x} \Bigr)^{\frac{1}{p}}
\end{align*}
for any 
$0\leqslant \lambda < N$ and $1 \leqslant p < +\infty$. 

We say that a Lebesgue measurable function 
$u \colon \mathbb{R}^{N} \to \mathbb{R}$ 
belongs to the weighted Morrey space 
$L_{M}^{p,\gamma+\lambda}(\mathbb{R}^{N},|x|^{-\lambda})$ 
if 
\begin{align*}
\|u\|_{L_{M}^{p,\gamma+\lambda}(\mathbb{R}^{n},|x|^{-\lambda})}
& \coloneqq
\sup_{x\in\mathbb{R}^{N},\: R\in \mathbb{R}_{+}}
\Bigl\{
\Bigl(
R^{\gamma+\lambda-N}
\int_{B_{R}(x)} \dfrac{|u|^{p}}{|x|^{\lambda}}\dd{x}
\Bigr)^{\frac{1}{p}} \Bigr\} 
< +\infty,
\end{align*}
where $1 \leqslant p < +\infty$;
$\gamma, \, \lambda \in \mathbb{R}_{+}$,
and
$0 < \gamma+\lambda < N$.

Our concerns involve the homogeneous fractional Sobolev-Slobodeckij space $\dot{W}^{s,p}_{\theta}(\mathbb{R}^N)$ 
defined as the completion of the space 
$C_{0}^{\infty}(\mathbb{R}^{N})$
with respect to the Gagliardo seminorm given by
\begin{align*}
u \mapsto
[u]_{\dot{W}^{s,p}_{\theta}(\mathbb{R}^N)}
& \coloneqq
\Bigl(
 \iint_{\mathbb{R}^{2N}} 
\frac{|u(x)-u(y)|^{p}}%
{|x|^{\theta _1}|x-y|^{N+sp}|y|^{\theta _2}}
\dd x \dd y
\Bigr)^{\frac{1}{p}},
\end{align*}
\ie
$\dot{W}^{s,p}_{\theta}(\mathbb{R}^N) = 
\overline{C_{0}^{\infty}(\mathbb{R}^{N})}^{[\,\cdot\,]} $.
We can equip the homogeneous fractional Sobolev space 
$\dot{W}^{s,p}_{\theta}(\mathbb{R}^N)$ 
with the norm 
\begin{align*}
\|u \|_{\dot{W}_{\theta}^{s,p} (\mathbb{R}^N)} 
& \coloneqq
\Bigl(
 \iint_{\mathbb{R}^{2N}} 
\dfrac{|u(x)-u(y)|^{p}}%
{|x|^{\theta _1}|x-y|^{N+sp}|y|^{\theta _2}}
\dd x \dd y
- \gamma
\int_{\mathbb{R}^N}  
\dfrac{|u|^{p}}{|x|^{sp+\theta}}
\dd x
\Bigr)^{\frac{1}{p}}\\
& \coloneqq \Big(
[u]_{\dot{W}_{\theta}^{s,p} (\mathbb{R}^n)}^{p}
-\gamma \|u \|_{L^p (\mathbb{R}^N; |x|^{-sp-\theta})}^{p}
\Bigr)^{\frac{1}{p}}.
\end{align*} 
Here, we assume that $\gamma < \gamma_{H}$, where the
best fractional Hardy constant is defined by
 \begin{align*}
\gamma_{H}
& \coloneqq 
\inf_{\substack{u \in \dot{W}^{s,p}_{\theta}(\mathbb{R}^n) \\ u \neq 0}}
\dfrac{[u]_{\dot{W}^{s,p}_{\theta}(\mathbb{R}^n)}^{p}}%
{\|u\|_{L^{p}(\mathbb{R}^{n};|x|^{-sp-\theta})}^{p}}.
\end{align*}
This turns the space 
$\dot{W}_{\theta}^{s,p} (\mathbb{R}^N)$
into a Banach space; moreover, this space is uniformly convex; in particular, it is reflexive and separable.

For simplicity, hereafter we denote the Cartesian product space of two Banach spaces $W=\dot{W}^{s,p}_{\theta}(\mathbb{R}^N)
\times \dot{W}^{s,p}_{\theta}(\mathbb{R}^N)$, 
endowed with the norm
\begin{align*}
\|(u,v)\|_{W} 
& \coloneqq
\Big( \|u \|_{\dot{W}_{\theta}^{s,p} (\mathbb{R}^N)}^{p} 
+ \|v \|_{\dot{W}_{\theta}^{s,p} (\mathbb{R}^N)}^{p} \Big)^{1/p}.
\end{align*}

Intuitively, problem~\eqref{prob:sistemas} is 
understood as showing the existence of a pair 
$(u,v) \in W$ such that
\begin{align*}
    \begin{cases}
  (-\Delta)^{s}_{p,\theta} u 
   -\gamma \dfrac{|u|^{p-2}u}{|x|^{sp+ \theta}}
   = \Bigl(\displaystyle\int_{\mathbb{R}^N}\frac{|u|^{p^{\sharp}_s}}{|x-y|^{\mu}|y|^{\delta}}\dd y\Bigr)\frac{|u|^{p^{\sharp}_s-2}u}{|x|^{\delta}} +
\dfrac{|u|^{p^*_s -2}u} {|x|^{\beta}} 
     + \dfrac{\eta a}{a+b}\dfrac{|u|^{a-2}u|v|^b}{|x|^{\beta}}\\
     (-\Delta)^{s}_{p,\theta} v 
   -\gamma \dfrac{|v|^{p-2}v}{|x|^{sp+ \theta}} =\Bigl(\displaystyle\int_{\mathbb{R}^N}\frac{|v|^{p^{\sharp}_s}}{|x-y|^{\mu}|y|^{\delta}}\dd y\Bigr)\frac{|v|^{p^{\sharp}_s-2}v}{|x|^{\delta}}+\dfrac{|v|^{p^*_s-2}v}{|x|^{\beta}} 
     + \dfrac{\eta b}{a+b}\dfrac{|u|^{a}|v|^{b-2}v}{|x|^{\beta}}
    \end{cases}
\end{align*}
where the fractional $p$-Laplacian operator is defined 
for $\theta=\theta_{1}+\theta_{2}$, $x\in\mathbb{R}^{N}$, 
and any function $u \in C_{0}^{\infty}(\mathbb{R}^{N})$, as
\begin{align}
\label{plaplacianofracionario}
(-\Delta)_{p,\theta}^{s}u(x)
&\coloneqq
\textup{p.v.\xspace}
\int_{\mathbb{R}^{N}} 
\dfrac{|u(x)-u(y)|^{p-2}(u(x)-u(y))}{|x|^{\theta_{1}}
|x-y|^{N+sp}|y|^{\theta_{2}}}\dd{y},
\end{align} 
and p.v.\xspace\ is the
Cauchy's principal value. This operator is the prototype of nonlinear nonlocal elliptic operator and can also be defined on smooth functions by 
\begin{align*}
    (-\Delta)_{p,\theta}^{s}u(x)
&\coloneqq
2\lim_{\varepsilon\to 0} 
\int_{\mathbb{R}^{N}\backslash B_{\varepsilon}(x)} 
\dfrac{|u(x)-u(y)|^{p-2}(u(x)-u(y))}{|x|^{\theta_{1}}
|x-y|^{N+sp}|y|^{\theta_{2}}}\dd{y}.
\end{align*}
This definition is consistent, up to a normalization constant $C=C(N,s,\theta)$, with the usual definition of the linear fractional Laplacian operator $(-\Delta)^{s}$ for $p=2$ and $\theta=0$.

Our main goal in this work is to show that 
problem~\eqref{prob:sistemas} admits at least a weak solution, by which term we mean a function 
$(u,v) \in W$ such that
\begin{align*}
   \lefteqn{\iint_{\mathbb{R}^{2N}} \frac{|u(x)-u(y)|^{p-2}(u(x)-u(y))(\phi_1(x)-\phi_1(y))}{|x|^{\theta_1}|x-y|^{N+sp}|y|^{\theta_2}}\dd x \dd y} \\
    & \lefteqn{+ \iint_{\mathbb{R}^{2N}} \frac{|v(x)-v(y)|^{p-2}(v(x)-v(y))(\phi_2(x)-\phi_2(y))}{|x|^{\theta_1}|x-y|^{N+sp}|y|^{\theta_2}}\dd x \dd y } \\
    \lefteqn{-\gamma_1 \int_{\mathbb{R}^N} \frac{|u|^{p-2}u\phi_1}{|x|^{sp+\theta}}\dd x -
    \gamma_2 \int_{\mathbb{R}^N} \frac{|v|^{p-2}v\phi_2}{|x|^{sp+\theta}}\dd x} \\
    & \quad = \iint_{\mathbb{R}^{2N}} \frac{|u(x)|^{p_s^{\sharp}-2}|u(y)|^{p_s^{\sharp}}u(x)\phi_1(x)}{|x|^{\delta}|x-y|^{\mu}|y|^{\delta}} \dd x \dd y \\
    & \qquad + \iint_{\mathbb{R}^{2N}} \frac{|v(x)|^{p_s^{\sharp}-2}|v(y)|^{p_s^{\sharp}}v(x)\phi_2(x)}{|x|^{\delta}|x-y|^{\mu}|y|^{\delta}} \dd x \dd y \\
    & \qquad +  \int_{\mathbb{R}^N}\frac{|u|^{p_s^{*}(\beta, \theta)-2}u\phi_1(x)}{|x|^{\beta}} \dd x + \int_{\mathbb{R}^N}\frac{|v|^{p_s^{*}(\beta, \theta)-2}v\phi_2(x)}{|x|^{\beta}} \dd x \\
    & \qquad +  \int_{\mathbb{R}^N} \frac{\eta a |u|^{a-2}u\phi_1|v|^{b}}{|x|^{\beta}} \dd x +  \int_{\mathbb{R}^N} \frac{\eta b|u|^a|v|^{b-2}v\phi_2}{|x|^{\beta}} \dd x 
\end{align*}
for any pair  of test functions $(\phi_1, \phi_2) \in W$. 
Now we define the energy functional 
$I \colon W \to \mathbb{R}$ by
\begin{align}
    \label{funcionaldeenergia}
    I(u,v) & 
    = \frac{1}{p} \bigl[\iint_{\mathbb{R}^{2N}} \frac{|u(x)-u(y)|^p}{|x|^{\theta_1}|x-y|^{N+sp}|y|^{\theta_2}} \dd x \dd y + \iint_{\mathbb{R}^{2N}} \frac{|v(x)-v(y)|^p}{|x|^{\theta_1|}x-y|^{N+sp}|y|^{\theta_2}} \dd x \dd y \bigr] \nonumber\\
    &\quad - \frac{\gamma_1}{p} \int_{\mathbb{R}^N} \frac{|u|^{p}}{|x|^{sp+\theta}}\dd x - \frac{\gamma_2}{p} \int_{\mathbb{R}^N} \frac{|v|^{p}}{|x|^{sp+\theta}}\dd x \nonumber \\
    & \quad - \frac{1}{2p_s^{\sharp}(\delta, \theta, \mu)} \bigl[\iint_{\mathbb{R}^{2N}}\frac{|u(x)|^{p_s^{\sharp}}|u(y)|^{p_s^{\sharp}}}{|x|^{\delta}|x-y|^{\mu}|y|^{\delta}}\dd x \dd y +  \iint_{\mathbb{R}^{2N}}\frac{|v(x)|^{p_s^{\sharp}}|v(y)|^{p_s^{\sharp}}}{|x|^{\delta}|x-y|^{\mu}|y|^{\delta}}\dd x \dd y \bigr] \nonumber\\
    & \quad - \frac{1}{p^*_s}\bigl[\int_{\mathbb{R}^N}\frac{|u|^{p^*_s}}{|x|^{\beta}} \dd x + \int_{\mathbb{R}^N}\frac{|v|^{p^*_s}}{|x|^{\beta}} \dd x\bigr] - \frac{1}{p^*_s} \int_{\mathbb{R}^N} \frac{\eta |u|^a|v|^b}{|x|^{\beta}} \dd x.
\end{align}

For the parameters in the previously specified intervals, the energy functional $I$ is well defined and is continuously differentiable, 
\ie $I \in C^{1}(\dot{W}^{s,p}_{\theta}(\mathbb{R}^N);\mathbb{R})$; moreover, a nontrivial critical point of the energy functional $I$ is a nontrivial weak solution to problem~\eqref{prob:sistemas}.

\begin{theorem}
\label{teo:sistemas}
Problem~\eqref{prob:sistemas} has at least a nontrivial weak solution provided that 
$0 < s < 1$;
$0 < \alpha, \, \beta < sp + \theta < N$; 
$0 < \mu < N$; $a+b=p^*_s(\beta,\theta)$; $\eta \in \mathbb{R}^+$ and $\gamma_1,\gamma_2 < \gamma _H$.
\end{theorem}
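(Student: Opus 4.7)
The strategy is a variational one based on the mountain pass theorem applied to the energy functional $I$ defined in \eqref{funcionaldeenergia}. Since all nonlinear terms are homogeneous of degree strictly greater than $p$ (the degrees are $2p^{\sharp}_{s}$ for the Choquard term and $p^{*}_{s}(\beta,\theta)$ for the local critical and coupling terms, both exceeding $p$ thanks to $0<sp+\theta<N$ and $0<\mu<N$), and since the assumption $\gamma_{1},\gamma_{2}<\gamma_{H}$ guarantees via the fractional Hardy inequality that the quadratic-type part of $I$ is equivalent to $\|(u,v)\|_{W}^{p}$, a standard argument yields the mountain pass geometry: $I(0,0)=0$, there exist $\rho,\alpha_{0}>0$ with $I(u,v)\geqslant \alpha_{0}$ for $\|(u,v)\|_{W}=\rho$, and there is $(e_{1},e_{2})\in W$ with $\|(e_{1},e_{2})\|_{W}>\rho$ and $I(e_{1},e_{2})<0$ (by testing $I(tu,tv)$ as $t\to+\infty$). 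One then obtains a Palais--Smale sequence $\{(u_{n},v_{n})\}\subset W$ at the minimax level
\begin{align*}
c \;\coloneqq\; \inf_{\Gamma}\max_{t\in[0,1]} I(\Gamma(t)),
\qquad
\Gamma \;=\; \bigl\{\gamma\in C([0,1];W)\colon \gamma(0)=0,\; I(\gamma(1))<0\bigr\}.
\end{align*}

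Next I would establish boundedness of $\{(u_{n},v_{n})\}$ in $W$ by the classical combination $I(u_{n},v_{n})-\frac{1}{2p^{\sharp}_{s}}\langle I'(u_{n},v_{n}),(u_{n},v_{n})\rangle$ (after verifying that $2p^{\sharp}_{s}\leqslant p^{*}_{s}$ so all critical terms have favourable signs in the combination; otherwise I combine using $\frac{1}{p^{*}_{s}}$ instead, whichever is smaller). Boundedness yields a weak limit $(u,v)\in W$ along a subsequence, and by the Brezis--Lieb type decompositions adapted to the weighted setting together with the Hardy--Littlewood--Sobolev and Caffarelli--Kohn--Nirenberg inequalities recalled in the paper, $(u,v)$ is a weak solution of \eqref{prob:sistemas}. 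The issue, as always with doubly critical problems, is that $(u,v)$ may be trivial because of concentration of mass.

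To prevent the trivial limit, I would identify a compactness threshold $c^{*}>0$, expressed as the minimum among the mountain pass energies of the limiting problems (system with only the Sobolev--coupling term, system with only the Choquard term, and semitrivial configurations) which are themselves expressed through the best constants of the weighted fractional Sobolev, Stein--Weiss and Caffarelli--Kohn--Nirenberg inequalities together with the Morrey--refined Sobolev inequality mentioned in the abstract. Standard concentration-compactness for the fractional $p$-Laplacian with Hardy potential, in the spirit of Lions and its fractional extensions, would imply that if $c<c^{*}$ then the PS sequence is relatively compact and hence $(u,v)\neq(0,0)$ is a nontrivial critical point of $I$, yielding the conclusion.

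The decisive step, and the one I expect to be the principal obstacle, is the strict inequality $c<c^{*}$. Following the standard route I would build an admissible path using rescaled extremal functions $U_{\varepsilon}$ for the weighted fractional Sobolev inequality, plugged into both coordinates as $(t\,\tau_{1}\,U_{\varepsilon},t\,\tau_{2}\,U_{\varepsilon})$ with $(\tau_{1},\tau_{2})$ optimally chosen to exploit the coupling $\frac{\eta}{p^{*}_{s}}|u|^{a}|v|^{b}/|x|^{\beta}$ (this is where $a+b=p^{*}_{s}$ and $\eta>0$ play a crucial role in lowering the energy below $c^{*}$). The delicate asymptotic estimates of the Hardy, Choquard, and coupling terms along $U_{\varepsilon}$ as $\varepsilon\to 0$, using the Morrey--refined Sobolev inequality to handle the nonlocal Choquard contribution, should produce the required gap. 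The coupling constant $\eta\in\mathbb{R}^{+}$ guarantees that the vectorial competitor beats every semitrivial one, while the Choquard term, being of lower homogeneity than $p^{*}_{s}$ if $2p^{\sharp}_{s}<p^{*}_{s}$ (respectively higher if not), contributes a strictly negative correction when integrated against $U_{\varepsilon}$, which pushes $c$ strictly below $c^{*}$. With this estimate in hand, compactness is restored and the mountain pass critical point is a nontrivial weak solution of~\eqref{prob:sistemas}.
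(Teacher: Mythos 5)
Your overall blueprint (mountain pass geometry, Palais--Smale sequence, boundedness via the combination $I-\frac{1}{2p^{\sharp}_{s}}\langle I',\cdot\rangle$ or $I-\frac{1}{p^{*}_{s}}\langle I',\cdot\rangle$ depending on which exponent is larger, weak limit is a critical point, and the obstruction being a compactness threshold $c^{*}$) matches the paper up to that point. The decisive difference is in how the strict inequality $c<c^{*}$ is obtained, and here you take a genuinely different route that also happens to run into the difficulty the paper explicitly set out to avoid.

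You propose to build the mountain-pass path out of rescaled extremal ``bubbles'' $U_{\varepsilon}$ for the weighted fractional Sobolev inequality, plugged in with coupling weights $(\tau_{1},\tau_{2})$, and to deduce $c<c^{*}$ from delicate asymptotic expansions of the Hardy, Choquard, and coupling terms as $\varepsilon\to 0$. For $p\neq 2$ the extremals of the fractional $p$-Sobolev inequality are not known explicitly (the paper emphasizes this: only a conjectured profile and asymptotic decay estimates from Brasco--Mosconi--Squassina are available), so the Brezis--Nirenberg style expansion you sketch would be very hard to carry out rigorously and is precisely what the authors designed their argument to bypass. What the paper does instead is first prove, in Proposition~\ref{minimizadores:sistemas}, that the best constants $S^{\sharp}$ and $S^{*}$ are actually \emph{attained} in $W$. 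This is achieved by a Lions-type concentration argument run in weighted Morrey spaces: the refined Caffarelli--Kohn--Nirenberg inequality pins a nontrivial amount of Morrey mass, dilation invariance is used to recentre and rescale the minimizing sequence, and a local compactness lemma identifies a nontrivial weak limit that saturates the infimum. Once minimizers $(u_{1},v_{1})$ for $S^{\sharp}$ and $(u_{2},v_{2})$ for $S^{*}$ are in hand, the estimate $c<c^{*}$ in Lemma~\ref{lema2passodamontanha:sistemas} is almost immediate: inserting, say, $(tu_{1},tv_{1})$ into $I$, the extra critical term $\frac{t^{p^{*}_{s}}}{p^{*}_{s}}Q^{*}(u_{1},v_{1})>0$ that is dropped in the comparison function $f_{1}(t)$ forces a \emph{strict} gap below $\sup_{t}f_{1}(t)=\bigl(\frac{1}{p}-\frac{1}{2p^{\sharp}_{s}}\bigr)S^{\sharp\,2p^{\sharp}_{s}/(2p^{\sharp}_{s}-p)}$, with no asymptotic analysis at all. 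Your claim that the coupling constant $\eta>0$ is what ``lowers the energy below $c^{*}$'' is also not how the paper uses it; $\eta>0$ merely makes $Q^{*}$ a legitimate coupled quantity, while the strict gap comes from the presence of the \emph{other} critical term evaluated on an actual minimizer.

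A further gap: you omit the attainment of the best constants entirely, but it is needed not only for the $c<c^{*}$ estimate but also structurally for the final convergence step in the proof of Theorem~\ref{teo:sistemas}. There the paper does not obtain relative compactness of the Palais--Smale sequence itself; rather it shows $d_{1},d_{2}>0$ for the limits of $Q^{*}$ and $Q^{\sharp}$, then rescales the sequence again via the Morrey mechanism so that the rescaled sequence (still a Palais--Smale sequence, by dilation invariance of the norms involved) has a genuinely nontrivial weak limit. Your phrase ``the PS sequence is relatively compact and hence $(u,v)\neq(0,0)$'' overstates what is proved and skips the rescaling that actually produces the nontrivial limit. So, while your proposal is plausible as a generic plan, it misses the paper's key technical device (attainment of $S^{\sharp},S^{*}$ through weighted Morrey concentration), replaces it with a bubble expansion that is not available in the $p\neq 2$ fractional setting, and misdescribes the mechanism by which nontriviality of the limit is recovered.
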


In this work we also consider the following variants of problem~\eqref{prob:sistemas}, namely one problem with a Hardy potential and double Sobolev type nonlinearities,
\begin{align}
\label{prob:sistemas02}
    \begin{cases}
  (-\Delta)^{s}_{p,\theta} u 
   -\gamma_1 \dfrac{|u|^{p-2}u}{|x|^{sp+ \theta}} 
   = \displaystyle\sum_{k=1}^{k=2} \dfrac{|u|^{p^*_s-2}u} {|x|^{\beta}} 
     + \dfrac{\eta a}{a+b}\dfrac{|u|^{a-2}u|v|^b}{|x|^{\beta_k}}\\
     (-\Delta)^{s}_{p,\theta} v 
   -\gamma_2 \dfrac{|v|^{p-2}v}{|x|^{sp+ \theta}} 
   = \displaystyle\sum_{k=1}^{k=2}\dfrac{|v|^{p^*_s-2}v}{|x|^{\beta_k}} 
     + \dfrac{\eta b}{a+b}\dfrac{|u|^{a}|v|^{b-2}v}{|x|^{\beta_k}}
    \end{cases}
\end{align}
and another one with a Hardy potential and double Choquard type nonlinearities,
\begin{align}
\label{prob:sistemas03}
    \begin{cases}
  (-\Delta)^{s}_{p,\theta} u 
   -\gamma_1 \dfrac{|u|^{p-2}u}{|x|^{sp+ \theta}} 
   = \displaystyle\sum_{k=1}^{k=2}\left[
       I_{\mu_k} \ast F_{\delta,\theta,\mu_k}(\cdot, u) 
       \right](x)f_{\delta,\theta,\mu_k}(x,u) 
     + \dfrac{\eta a}{a+b}\dfrac{|u|^{a-2}u|v|^b}{|x|^{\beta_k}}\\
     (-\Delta)^{s}_{p,\theta} v 
   -\gamma_2 \dfrac{|v|^{p-2}v}{|x|^{sp+ \theta}} 
   = \displaystyle\sum_{k=1}^{k=2}\left[
       I_{\mu_k} \ast F_{\delta,\theta,\mu_k}(\cdot, v) 
       \right](x)f_{\delta,\theta,\mu_k}(x,v) 
     + \dfrac{\eta b}{a+b}\dfrac{|u|^{a}|v|^{b-2}v}{|x|^{\beta_k}}
    \end{cases}
\end{align}

The notion of weak solution to problems~\eqref{prob:sistemas02}  
and~\eqref{prob:sistemas03} 
can be defined in the same way as that for problem~\eqref{prob:sistemas}, \ie
we multiply the differential equations by a pair of test functions and use a kind of integration by parts. Then we recognize these expressions as the derivatives of an energy functional which, under the appropriate hypotheses on the parameters, is continuously differentiable. This means that weak solutions to these problems are critical points of the appropriate energy functional. By adaptting the method used in the proof of Theorem~\ref{teo:sistemas} we deduce the following result.
\begin{theorem}
\label {teo:sistemas02}
Problems~\eqref{prob:sistemas02} and~\eqref{prob:sistemas03} have at least a nontrivial weak solution
under similar assumptions as in Theorem~\ref{teo:sistemas}, \ie
$0 < s < 1$;
$0 < \alpha_k, \, \beta_k < sp + \theta < N$; 
$0 < \mu_k < N$; $a+b=p^*_s(\beta_k,\theta)$; $\eta \in \mathbb{R}^+$ and $\gamma_1,\gamma_2 < \gamma _H$ for $k \in \{1,2\}$.
\end{theorem}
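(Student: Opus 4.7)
The plan is to follow the scheme used for Theorem~\ref{teo:sistemas} with suitable adjustments. For problem~\eqref{prob:sistemas02}, I would define the energy functional $I_{2}\colon W\to\mathbb{R}$ by replacing the nonlocal Choquard contribution in~\eqref{funcionaldeenergia} with the finite sum of Hardy-Sobolev-type contributions $\sum_{k=1}^{2}\tfrac{1}{p^{*}_{s}(\beta_{k},\theta)}\int_{\mathbb{R}^{N}}|u|^{p^{*}_{s}(\beta_{k},\theta)}/|x|^{\beta_{k}}\dd x$, and summing the coupling terms indexed by $\beta_{k}$ as well. For problem~\eqref{prob:sistemas03}, the corresponding functional $I_{3}$ is obtained by keeping two Choquard terms indexed by $\mu_{k}$, together with the two coupling integrals. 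Well-posedness of both functionals in $C^{1}(W,\mathbb{R})$ follows from the fractional Hardy inequality (controlling the Hardy term by $[\,\cdot\,]_{\dot{W}^{s,p}_{\theta}}^{p}$ through $\gamma<\gamma_{H}$) together with the Caffarelli-Kohn-Nirenberg inequality for the Sobolev-type terms and the Stein-Weiss/Hardy-Littlewood-Sobolev inequality for the Choquard-type terms, exactly as in the setup preceding Theorem~\ref{teo:sistemas}.

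Next I would verify the mountain pass geometry for $I_{2}$ and $I_{3}$. Each added term is of higher than $p$-order in $(u,v)$ with the correct sign, so the local minimum at the origin and the existence of a point where the functional is negative are established exactly as for $I$: for $\|(u,v)\|_{W}$ small, the dominant $\tfrac{1}{p}\|\cdot\|_{W}^{p}$ behavior controls everything; along any ray $t(u_{0},v_{0})$ with $(u_{0},v_{0})\neq 0$ the leading $t^{2p^{\sharp}_{s}}$ or $t^{p^{*}_{s}}$ term forces $I_{j}(tu_{0},tv_{0})\to-\infty$. Palais-Smale sequences are then seen to be bounded by the usual combination $I_{j}(u_{n},v_{n})-\tfrac{1}{\theta^{\ast}}\langle I_{j}'(u_{n},v_{n}),(u_{n},v_{n})\rangle$ with $\theta^{\ast}$ the smallest nonlinear exponent appearing (equal to $p^{*}_{s}$ for $I_{2}$ and to $2p^{\sharp}_{s}$ for $I_{3}$), together with $\gamma_{1},\gamma_{2}<\gamma_{H}$.

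The main obstacle is, as in the original theorem, the lack of compactness caused by the critical terms, now compounded by the presence of two summation indices. The key step is a local Palais-Smale condition below a critical threshold $c^{*}$ defined as a minimum, over $k\in\{1,2\}$, of the analogous energy levels built from the best constants of the Caffarelli-Kohn-Nirenberg inequality (for $I_{2}$) or the Stein-Weiss inequality (for $I_{3}$). The subadditivity argument used in the proof of Theorem~\ref{teo:sistemas} adapts to a finite sum of critical terms by classifying any energy loss along a minimizing sequence according to which index $k$ is responsible for concentration, and ruling out each alternative separately. The Morrey-refined Sobolev inequality employed in Theorem~\ref{teo:sistemas} remains applicable, ensuring that a subsequence vanishing in the weighted Morrey seminorm forces strong convergence below the threshold.

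Finally, I would construct a suitable minimax path whose maximum level lies strictly below $c^{*}$. For each variant the test pair is built from extremals (or near-extremals in the Morrey-refined sense) of the most restrictive best-constant inequality among the $k$-indexed ones, suitably rescaled, with the two components chosen proportional so as to profit from the coupling term $\eta|u|^{a}|v|^{b}/|x|^{\beta_{k}}$; the positivity of $\eta$ yields the strictly-below estimate. The mountain pass theorem then produces a nontrivial critical pair $(u,v)\in W$, which by the same integration-by-parts identification used for~\eqref{prob:sistemas} is a weak solution to~\eqref{prob:sistemas02} or~\eqref{prob:sistemas03}. I expect the bookkeeping with two indices $k$ to be the most tedious point, but no new phenomenon arises beyond what is already handled in Theorem~\ref{teo:sistemas}.
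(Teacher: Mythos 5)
Your proposal is correct and follows essentially the same route as the paper: the authors likewise state that the proof of Theorem~\ref{teo:sistemas02} repeats the steps of Theorem~\ref{teo:sistemas}, with the only substantive change being the redefinition of the threshold $c^{*}$ as a minimum over $k\in\{1,2\}$ of the corresponding mountain-pass levels built from the best Sobolev constants (for~\eqref{prob:sistemas02}) or Stein-Weiss/Choquard constants (for~\eqref{prob:sistemas03}). Your account of the energy functionals, the mountain-pass geometry, the boundedness of Palais-Smale sequences, and the use of the Morrey-refined Caffarelli-Kohn-Nirenberg inequality to recover compactness below $c^{*}$ matches the paper's intended (though abbreviated) argument.
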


\subsubsection*{Notation.}
For $\rho \in \mathbb{R}_{+}$, we define 
$B_{\rho}(x)\coloneqq
\{y\in\mathbb{R}^{N} \colon |x-y|<\rho\}$, the open ball centered at $x$ with radius $\rho$. The constant $\omega_{N}$ denotes the volume of the unit ball in $\mathbb{R}^{N}$.
The arrows $\to$ and $\rightharpoonup$ denote the strong convergence and the weak convergence, respectively.
Given the functions $f,g \colon \mathbb{R}^{N}\to\mathbb{R}$, we recall that $f=O(g)$ if there is a constant $C\in \mathbb{R}_{+}$ such that 
$|f(x)|\leqslant C|g(x)|$ for all
$x \in \mathbb{R}^{N}$;
and $f=o(g)$ as $x\to x_{0}$ if 
$\lim_{x\to x_{0}} |f(x)|/|g(x)|=0$.
The pair $r$ and $r'$ denote H\"{o}lder conjugate exponents, \ie 
$1/r + 1/r'=1$ or $r+r'=r r'$.
The positive and negative parts of a function $\phi$ are denoted by $\phi_{\pm}\coloneqq \max\{\pm\phi,0\}$. Throughout this paper, we will use the following notations: $tz \coloneqq t(u, v) = (tu, tv)$ for all
$(u, v) \in W$ and $t \in \mathbb{R}$; $(u, v)$ is said to be nonnegative in $\mathbb{R}^N$ if $u \geqslant 0$ and $v \geqslant 0$ in $\mathbb{R}^N$; $(u, v)$ is
said to be positive in $\mathbb{R}^N$ if $u > 0$ and $v > 0$ in $\mathbb{R}^N$. Finally, $C\in\mathbb{R}_{+}$ denotes a universal constant that may change from line to line; when it is relevant, we will add subscripts to specify the dependence of certain parameters.

\section{Historical background}

\subsubsection*{The fractional Laplacian}
There are many equivalent definitions of the fractional Laplacian. In our case, on the Euclidean space $\mathbb{R}^{N}$ of dimension $N \geqslant 1$, for $\theta=\theta_{1}+\theta_{2}$ and the above specified intervals for the parameters, we define the nonlocal elliptic $p$-Laplacian operator with the help of the Cauchy's principal value integral as in~\eqref{plaplacianofracionario}.

For problems with two nonlinearities involving the Laplacian operator, see Filipucci, Pucci \& Robert~\cite{filippucci2009p}. For similar problems involving the fractional Laplacian, see 
Servadei \& Valdinoci~\cite{servadei2012mountain},
Ghoussoub \& Shakerian~\cite{MR3366777}, 
Chen \& Squassina~\cite{chen2016critical}
Chen~\cite{MR3762809,chen2019existence},
Assun{\c{c}}{\~a}o, Silva \& Miyagaki~\cite{assunccao2020fractional}.
For a survey paper on the subject of fractional Sobolev spaces, see Di Nezza, Palatucci \& Valdinoci~\cite{MR2944369}; see also
Molica Bisci, R\u{a}dulescu \& Servadei~\cite{MR3445279}.

\subsubsection*{The Choquard equation}
On the Euclidean space $\mathbb{R}^{N}$ of dimension $N \geqslant 1$ and for $x\in \mathbb{R}^{N}$, the equation
$-\Delta u + V(x) u = (I_{\mu} \ast |u|^{q}) |u|^{q-2}u$
was introduced by Choquard in the case $N=3$ and $q=2$ to model a certain approximation to Hartree-Fock theory of one-component plasma and to describe a electron trapped in its own hole. Also in this situation it finds physical significance in the work by Fr\"{o}lich and Pekar on the description of the quantum mechanics of a polaron at rest. When $V(x) \equiv 1$, the groundstate solutions exist if $2^{\flat}\coloneqq 2(N-\mu/2))/N < q < 2(N-\mu/2)/(N-2s) \coloneqq 2^{\sharp}$ due to the mountain
pass lemma or the method of the Nehari manifold, 
while there are no nontrivial solution if $q = 2^{\flat}$ or if $q = 2^{\sharp}$ as
a consequence of the Pohozaev identity.

In general, the associated Schr\"{o}dinger-type evolution equation
$i \partial_{t} \psi 
= \Delta \psi
+ \big( I_{\mu} \ast |\psi|^{2} \big) \psi$
is a model for large systems of atoms with an attractive interaction that is weaker and has a longer range than that of the nonlinear Schr\"{o}dinger equation. 
Standing wave solutions of this equation
are solutions to the Choquard equation. 
For more information on the various results related to the non-fractional
Choquard-type equations and their variants see Moroz \& Van Schaftingen~\cite{MR3625092} and 
Mukherjee \& Sreenadh~\cite{MR3720946}.

\subsubsection*{The Morrey spaces}
The study of Morrey spaces is motivated by many reasons.
Initially, these
spaces were introduced by Morrey in order to understand the regularity of solutions to elliptic partial
differential equations. 
Regularity theorems, which allow one to conclude higher regularity of a function that is a solution of a differential equation together with a lower regularity of that function, play a central role in the theory of partial differential equations. 
One example of this kind of regularity theorem is a version of the Sobolev embedding theorem which states that 
$W^{j+m,p}(\Omega) \subset C^{j,\lambda}(\overline{\Omega})$ for $0 < \lambda \leqslant m - N/p$, where $j \in \mathbb{N}$ and $\Omega \subset \mathbb{R}^{N}$ is a Lipschitz domain.

Morrey spaces can complement the boundedness properties of operators that Lebesgue spaces can not handle. In line with this, many authors study the boundedness of various
integral operators on Morrey spaces. The theory of Morrey spaces may come in useful when the Sobolev embedding theorem is not readily available. For more information on Morrey spaces, see Gantumur~\cite{gantumur} and Sawano~\cite{MR4038542}.

\subsubsection*{Systems of fractional elliptic equations}
The subject of two or more fractional elliptic equations have been widely studied in recent years.
We devote this section on briefly glimpsing the results that have already been proved in the context of existence, non-existence, uniqueness and multiplicity of solutions to systems of fractional elliptic equations. 

Liu \& Wang~\cite{liu2010ground} gave a sufficient condition on large coupling coefficients for the existence of a nontrivial ground state solution in a system of nonlinear Schrödinger equation; they also considered bound state solutions. 
Chen \& Deng~\cite{chen2016nehari} investigated the existence of two nontrivial solutions to the fractional $p$-Laplacian system involving concave-convex nonlinearities via the Nehari method. 
Chen~\cite{chen2016infinitely} obtained the existence of infinitely many nonnegative solutions for a class of the quasilinear Schrödinger system in $\mathbb{R}^N$ in the Laplacian setting and investigate the multiplicity of solutions for a $p$-Kirchhoff
system driven by a nonlocal integro-differential operator with zero Dirichlet boundary data. Xiang, Zhang \& R\u{a}dulescu~\cite{xiang2016multiplicity} studied the multiplicity of solutions for a $p$--Kirchhoff system driven by a nonlocal integro-differential operator with zero Dirichlet boundary data. 
Chen \& Squassina~\cite{chen2016critical} used Nehari manifold techniques to obtain the existence of multiple solutions to a fractional $p$--Laplacian system involving critical concave-convex nonlinearities. Fiscella, Pucci \& Saldi~\cite{fiscella2017existence}, using several variational methods, dealt with the existence of nontrivial nonnegative solutions of
Schrödinger–Hardy systems driven by two possibly different fractional $p$--Laplacian
operators. The main features of this paper is the presence of the Hardy term and the fact that the nonlinearities do not necessarily satisfy the Ambrosetti–Rabinowitz condition. Wang, Zhang \& Zhang~\cite{wang2017fractional} are interested in a fractional Laplacian system in the whole space $\mathbb{R}^N$, which involves critical Sobolev-type nonlinearities and critical Hardy-Sobolev-type nonlinearities. 
Yang~\cite{yang2021doubly}  considered the existence of nontrivial weak solutions to a doubly critical system involving fractional Laplacian in $\mathbb{R}^N$ with subcritical weight. More recently,
Lu \& Shen~\cite{MR4129343} studied a critical fractional $p$-Laplacian system with homogeneous nonlinearity; they used a concentration compactness principle associated with fractional $p$-Laplacian system for the fractional order Sobolev spaces in bounded domains, which is significantly more difficult to prove than in the case of a single fractional $p$-Laplacian equation and is of independent interest.

Most of the existing results have been developed for systems with two equations.
For a general system, Lin \& Wei~\cite{lin2005ground,MR2358296} studied a system with several coupled nonlinear Schr\"{o}dinger equations in the whole space up to three dimensions which has some applications in nonlinear optics. The existence of ground state solutions may depend on the coupling constants that model the interaction between the components of the system. If a constant is positive, the interaction is attractive; otherwise, it is repulsive. When all the constants are positive and some associated matrix is positively definite, they proved the existence of a radially symmetric ground state solution; however, if all the constants are negative, our if one of them is negative and the matrix is positively definite, there is no ground state solution. They also obtained the existence of a bound state solution which is non-radially symmetric in the three dimensional case.

\subsubsection*{Our contribution to the problem and some of its difficulties}
The present work is motivated by Assunção, Miyagaki \& Siqueira~\cite{assunccao2023fractional}, Wang, Zhang \& Zhang~\cite{wang2017fractional} and Yang~\cite{yang2021doubly}. 
Our existence result can be regarded as an extension and improvement of the corresponding existence results in these works. More precisely, we will extend the result in~\cite{assunccao2023fractional} to a system of coupled equations in $\mathbb{R}^N$ with the general fractional $p$-Laplacian
with $p > 1$ and $\theta = \theta_1 + \theta_2$ not necessarily zero. Moreover, we use a refinement of Sobolev inequality that is related to Morrey space because our problem involves doubly critical exponents. As one can expect, the nonlocality of the fractional $p$-Laplacian makes it more difficult to study. In our case, one of the main difficulties when dealing with this problem is the lack of compactness of Sobolev embedding theorem for the critical exponent. Therefore, we have to develop a precise analysis of the level of the Palais-Smale sequences obtained with the application of the mountain pass theorem and study their behavior concerning strong convergence of one of its scaled subsequences.

\section{Existence of solutions for auxiliary minimization problems}
We begin this section by introducing two important and sharp Rayleigh-Ritz constants. The first one is related to the Gagliardo seminorm, the Hardy term, and a convolution involving the upper Stein-Weiss exponent,
\begin{align}
\label{Ssharp:sistemas}
S^\sharp
&\coloneqq
\inf_{(u,v)\in W\setminus\{0\}}
\dfrac{\|(u,v)\|_W^{p}}{(Q^{\sharp}(u,v))^{\frac{p}{2p_{s}^{\sharp}(\delta,\theta,\mu)}}},
\end{align}
where the quadratic form $Q^{\sharp} \colon W \to \mathbb{R}$ is given by
\begin{align}
\label{qsharp:sistemas}
Q^{\sharp}(u,v)
& \coloneqq
\iint_{\mathbb{R}^{2N}}
\dfrac{|u(x)|^{p_{s}^{\sharp}(\delta,\theta,\mu)}
       |u(y)|^{p_{s}^{\sharp}(\delta,\theta,\mu)}+|v(x)|^{p_{s}^{\sharp}(\delta,\theta,\mu)}
       |v(y)|^{p_{s}^{\sharp}(\delta,\theta,\mu)}}%
       {|x|^{\delta}
        |x-y|^{\mu}
        |y|^{\delta}}
        \dd{x}\dd{y}.
\end{align}

The second one is related to the Gagliardo seminorm, ther Hardy term, and the norm in the critical weighted Lebesgue space, that is, the Sobolev constant,
\begin{align}
\label{Sast:sistemas}
S^{\ast}
&\coloneqq
\inf_{(u,v)\in W\setminus\{0\}}
\dfrac{\|(u,v)\|_W^p}{(Q^{*}(u,v))^{\frac{p}{p^*_s(\beta,\theta)}}},
\end{align}
where the quadratic form $Q^{*}\colon W \to \mathbb{R}$ is given by
\begin{align}
\label{qestrela:sistemas}
Q^{*}(u,v)
&\coloneqq \int_{\mathbb{R}^N}\frac{|u(x)|^{p^*_s(\beta,\theta)}+|v(x)|^{p^*_s(\beta,\theta)}+\eta |u(x)|^a|v(x)|^b}{|x|^{\beta}} \dd{x}   
\end{align}
For general $p\neq 2$, the explicit formula for the extremal functions for the $p$-fractional Sobolev inequality is not known yet, though it is conjectured that it is of the form
\begin{align*}
    U(x)=\dfrac{C}{(1+|x|^{\frac{p}{p-1}})^{\frac{N-sp}{p}}}
\end{align*} 
up to translation and scaling. However, there is a result about the asymptotic behavior of $U$, as seen in Brasco, Mosconi \& Squassina~\cite{brasco2016optimal} and Mosconi, Perera, Squassina \& Yang~\cite{MR3530213}.

One of the first major difficulties that we encounter is the lack of an explicit formula for a minimizer of the quantity $S^{\ast}$. There is a conjecture about the minimizers which states that they have the form 
\begin{align*}
U(x) & = \dfrac{c}{[1+(|x-x_{0}|/\varepsilon))^{\frac{p}{p-1}}]^{\frac{N-sp}{p}}}
\end{align*}
where $C \neq \mathbb{R}^{N}$, 
$x_{0} \in \mathbb{R}^{N}$, and
$\varepsilon \in \mathbb{R}_{+}$.
This conjecture was proved by Lieb~\cite{MR717827} in the case $p=2$; however, for $p \neq 2$ it is not even known if these functions are minimizers. To overcome this difficulty, we will work with some asymptotic estimates for minimizers recently obtained by Brasco, Mosconi \& Squassina~\cite{MR3461371};
see also Mosconi, Perera, Squassina \& Yang~\cite{MR3530213}.

In the next lemma we present some embeddings of Morrey spaces into weighted Lebesgue spaces; the proof is straightforward.
\begin{lemma}
\label{lemma:imersoes}
The following fundamental properties are true.
\begin{enumerate}
    \item $L^{p\rho}(\mathbb{R}^N, |y|^{-\rho \lambda}) \hookrightarrow L^{p,\gamma +\lambda}(\mathbb{R}^N, |y|^{-\lambda})$ for $\rho = \frac{N}{\gamma +\lambda}>1$.

    \item For any $p \in (1, + \infty)$, we have $L^{p, \gamma + \lambda}(\mathbb{R}^N, |y|^{- \lambda}) \hookrightarrow L^{1,\frac{\gamma}{p} +\frac{\lambda}{p}}(\mathbb{R}^N, |y|^{-\frac{\lambda}{p}})$. 

    \item For $1 \leqslant p < +\infty$ and $\gamma + \lambda = N$, we have 
    $ L_{M}^{p,N}(\mathbb{R}^{N}, |y|^{-\lambda}) =L^p(\mathbb{R}^N, |y|^{-\lambda})$,
    i.e.,\xspace   $L_{M}^{p,N}(\mathbb{R}^{N}, |y|^{-\lambda})$ and $L^p(\mathbb{R}^N, |y|^{-\lambda})$ are continuously embedded in each other. 
    \item If if we assume that $s \in (0,1)$ and $0< \alpha < sp+\theta < N $, for $1 \leqslant q <  p^*_s(\alpha,\theta)$ and
    $r= \frac{\alpha}{p^*_s(\alpha,\theta)}$, it holds
    \begin{align}
    \label{imersoes}
        \dot{W}^{s,p}_{\theta}(\mathbb{R}^N) \hookrightarrow L^{p^*_s(\alpha,\theta)}(\mathbb{R}^N, |y|^{-\alpha}) \hookrightarrow L_{M}^{q, \frac{(N-sp-\theta)q}{p}+qr}(\mathbb{R}^N, |y|^{-pr})
    \end{align}
    and the norms in these spaces share the same dilation invariance.

    \item For any $q \in [1, p^*_s(0,\theta)), \dot{W}^{s,p}_{\theta}(\mathbb{R}^N) \hookrightarrow L^{p^*_s(0,\theta)}(\mathbb{R}^N) \hookrightarrow L^{q, \frac{(N-sp-\theta)q}{p}}(\mathbb{R}^N).$
\end{enumerate}

\end{lemma}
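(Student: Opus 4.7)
My plan rests on two ingredients: H\"{o}lder's inequality applied to integrals over balls $B_R(x)$, and the weighted fractional Caffarelli--Kohn--Nirenberg (Sobolev) embedding of $\dot{W}^{s,p}_{\theta}(\mathbb{R}^N)$ into the weighted Lebesgue space. In every case, the strategy is to arrange that the volume factor $|B_R|^{1/r'}$ (together with any pure-power weight factors) produced by H\"{o}lder matches exactly the scaling power $R^{\gamma+\lambda-N}$ appearing in the Morrey norm, so that a universal constant emerges that is independent of $R$ and of the centre $x$.

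For items (1)--(3) I would argue directly. In (1), I would write $|u|^p/|y|^{\lambda} = (|u|^{p\rho}/|y|^{\rho\lambda})^{1/\rho}\cdot 1$ and apply H\"{o}lder with exponents $\rho$ and $\rho' = \rho/(\rho-1)$; the choice $\rho = N/(\gamma+\lambda)$ forces $|B_R|^{1/\rho'} = \omega_N^{1/\rho'} R^{N-\gamma-\lambda}$, which cancels the Morrey prefactor. In (2), I would split $|u|/|y|^{\lambda/p} = (|u|^p/|y|^{\lambda})^{1/p}\cdot 1$ and apply H\"{o}lder with conjugate exponents $p$ and $p'$; the factor $|B_R|^{1/p'}$ combined with the source Morrey prefactor produces the target one raised to the power $1/p$. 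For (3), the hypothesis $\gamma+\lambda=N$ makes the Morrey prefactor equal to $1$, so the supremum as $R\to+\infty$ recovers the global weighted $L^p$-norm and the two spaces coincide with equivalent norms.

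The main content is item (4). The first embedding $\dot{W}^{s,p}_{\theta}(\mathbb{R}^N) \hookrightarrow L^{p^*_s(\alpha,\theta)}(\mathbb{R}^N, |y|^{-\alpha})$ I would invoke as the weighted fractional Sobolev (Caffarelli--Kohn--Nirenberg) inequality, assumed as known input. For the second embedding I would use $r = \alpha/p^*_s(\alpha,\theta)$ to factor the integrand and apply H\"{o}lder with exponents $A = p^*_s(\alpha,\theta)/q > 1$ and $A' = A/(A-1)$:
\begin{align*}
\int_{B_R(x)} \dfrac{|u|^q}{|y|^{pr}} \dd y
& = \int_{B_R(x)} \Bigl(\dfrac{|u|^{p^*_s(\alpha,\theta)}}{|y|^{\alpha}}\Bigr)^{q/p^*_s(\alpha,\theta)} |y|^{(q-p)r} \dd y \\
& \leqslant \Bigl(\int_{B_R(x)} \dfrac{|u|^{p^*_s(\alpha,\theta)}}{|y|^{\alpha}} \dd y\Bigr)^{q/p^*_s(\alpha,\theta)} \Bigl(\int_{B_R(x)} |y|^{(q-p)r A'} \dd y\Bigr)^{1/A'}.
\end{align*}
The second integral is a pure power of $R$ (up to an extra factor depending on $\operatorname{dist}(0,B_R(x))$ when $0 \notin B_R(x)$); combining its $R$-power with the Morrey prefactor $R^{\gamma+\lambda-N}$ and using $p^*_s(\alpha,\theta) = p(N-\alpha)/(N-sp-\theta)$, the total power of $R$ will cancel, yielding the desired uniform Morrey bound. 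I would then verify the dilation invariance through the family $u_\varepsilon(x) = \varepsilon^{(sp+\theta-N)/p}u(x/\varepsilon)$, under which the Gagliardo seminorm, the weighted $L^{p^*_s(\alpha,\theta)}$-norm and the Morrey norm all transform by the same factor $\varepsilon^0=1$. Item (5) is then immediate from (4) by taking $\alpha=0$, hence $r=0$, together with the unweighted fractional Sobolev embedding.

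The hardest part will be the integrability and scaling of the pure-power weight $|y|^{(q-p)rA'}$ on $B_R(x)$ when $q<p$: the exponent is then negative, so I would separate the cases $0 \in B_R(x)$ (where integrability requires $(q-p)rA' > -N$, to be checked from the standing assumption $0 < \alpha < sp+\theta < N$) and $0 \notin B_R(x)$ (where the weight is bounded by $\operatorname{dist}(0,B_R(x))^{(q-p)rA'}$), and in each subcase verify that the net power of $R$ exactly matches the Morrey scaling uniformly in $x$.
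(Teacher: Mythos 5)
The paper offers no proof of this lemma---it is declared ``straightforward''---so I can only assess correctness. Your H\"older arguments for items (1)--(3) and (5) are the intended elementary proofs and are correct: in (1) the choice $\rho=N/(\gamma+\lambda)$ makes $|B_R|^{1/\rho'}\sim R^{N-\gamma-\lambda}$ cancel the Morrey prefactor exactly; (2) is the same computation with conjugate exponents $p$ and $p'$; (3) is immediate since $\gamma+\lambda=N$ makes the prefactor identically $1$; and (5) balances because $N/p^*_s(0,\theta)=(N-sp-\theta)/p$.

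For item (4), however, the cancellation of the $R$-power that you assert does not hold, and you should run the arithmetic rather than claim it. Write $M=\tfrac{(N-sp-\theta)q}{p}+qr$ for the second Morrey index and $A'=p^*_s(\alpha,\theta)/(p^*_s(\alpha,\theta)-q)$. Since $p^*_s(\alpha,\theta)=\tfrac{p(N-\alpha)}{N-sp-\theta}$ gives $\tfrac{(N-sp-\theta)q}{p}-\tfrac{Nq}{p^*_s(\alpha,\theta)}=-qr$, one finds $M-N+\tfrac{N}{A'}=0$; but the residual weight $|y|^{(q-p)r}$ in your factorization contributes a further $R^{(q-p)r}$ to the estimate (the auxiliary integral satisfies $\int_{B_R(x)}|y|^{(q-p)rA'}\,\mathrm{d}y\lesssim R^{N+(q-p)rA'}$), so the net power of $R$ is $R^{(q-p)r}$, which is unbounded over $R\in\mathbb{R}_{+}$ whenever $q\neq p$. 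The same mismatch shows up in the dilation check: $\|u_\varepsilon\|_{L^{p^*_s(\alpha,\theta)}(\mathbb{R}^N,|y|^{-\alpha})}$ is invariant under $u_\varepsilon(x)=\varepsilon^{(sp+\theta-N)/p}u(x/\varepsilon)$, whereas the Morrey norm with weight $|y|^{-pr}$ and index $M$ picks up the factor $\varepsilon^{(p-q)r/q}$, so the claimed embedding and the claimed common dilation invariance both fail when $q\neq p$. The root cause is a typo in the stated lemma: the target weight should read $|y|^{-qr}$, not $|y|^{-pr}$ (the paper only ever invokes this with $q=p$, where the two coincide). With that correction the weight is absorbed exactly, $\tfrac{|u|^q}{|y|^{qr}}=\bigl(\tfrac{|u|^{p^*_s(\alpha,\theta)}}{|y|^{\alpha}}\bigr)^{q/p^*_s(\alpha,\theta)}$, so H\"older against the constant function gives $R^{M-N}|B_R|^{1/A'}\|u\|^q_{L^{p^*_s(\alpha,\theta)}(\mathbb{R}^N,|y|^{-\alpha})}$ with $M-N+N/A'=0$; there is then no separate power--weight integral, no case split on whether $0\in B_R(x)$, and the dilation invariance is genuinely verified. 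You should either prove the corrected statement or, at minimum, flag that your argument establishes item (4) only for $q=p$.
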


For more properties of Lebesgue spaces, integral inequalities and boundedness properties of the operators in generalized Morrey spaces, see Sawano~\cite{MR4038542}.

Next, we use the following versions of the fractional Hardy-Sobolev and Caffarelli-Kohn-Nirenberg inequalities; see Nguyen \& Squassina~\cite[Theorem~1.1]{MR3771839}; see also Abdellaoui \& Bentifour~\cite{MR3626031}.

\begin{lemma}
\label{lemma:2.2chineses:bis}
Let $N \geqslant 1, p \in (1,+\infty), s \in (0,1), 0\leqslant \alpha \leqslant sp+\theta<N, \theta, \theta_1,\theta_2,\beta \in \mathbb{R}$ be such that $\theta = \theta _1 + \theta _2$. If $1/p^*_s(\alpha,\theta) - \alpha / Np^*_s(\alpha,\theta)>0$, then there exists a positive constant $C(N,\alpha,\theta)$ such that
\begin{align}
    \label{des:2.2chineses:bis}
    \left(\int_{\mathbb{R}^N}\frac{|u|^{p^*_s(\alpha,\theta)}}{|x|^{\alpha}}\dd x\right)^{\frac{p}{p^*_s(\alpha,\theta)}} \leqslant C(N,\alpha,\theta) \iint_{\mathbb{R}^{2N}}      \frac{|u(x)-u(y)|^p}{|x|^{\theta _1}|x-y|^{N+sp}|y|^{\theta _2}} \dd x \dd y
\end{align}
for all $u \in \Dot{W}^{s,p}_{\theta}(\mathbb{R}^N)$.
\end{lemma}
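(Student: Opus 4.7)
The plan is to prove this weighted fractional Hardy--Sobolev--type inequality by combining two endpoint estimates and interpolating. First, I would observe that under the standing hypotheses the technical condition
\begin{equation*}
\frac{1}{p^*_s(\alpha,\theta)}-\frac{\alpha}{Np^*_s(\alpha,\theta)}=\frac{N-\alpha}{Np^*_s(\alpha,\theta)}>0
\end{equation*}
reduces to $\alpha<N$, which follows automatically from $\alpha\leqslant sp+\theta<N$. By density of $C_{0}^{\infty}(\mathbb{R}^{N})$ in $\dot{W}^{s,p}_{\theta}(\mathbb{R}^{N})$, it suffices to establish the estimate for $u\in C_{0}^{\infty}(\mathbb{R}^{N})$, which avoids any technical issue with the principal value or the Gagliardo integral.

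The core strategy is interpolation between two endpoint cases. At $\alpha=0$, the inequality reduces to the weighted fractional Sobolev embedding $\dot{W}^{s,p}_{\theta}(\mathbb{R}^{N})\hookrightarrow L^{p^*_s(0,\theta)}(\mathbb{R}^{N})$, which can be obtained via the Gagliardo representation combined with the classical fractional Sobolev inequality (or, when $p=2$, through the Riesz potential representation and the Stein--Weiss inequality). At the opposite endpoint $\alpha=sp+\theta$, the critical exponent collapses to $p^*_s(sp+\theta,\theta)=p$ and the estimate becomes the weighted fractional Hardy inequality, which is essentially the definition of the constant $\gamma_{H}$ recorded in the introduction. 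For intermediate values $\alpha\in(0,sp+\theta)$, I would apply H\"older's inequality to the pointwise factorization
\begin{equation*}
\frac{|u|^{p^*_s(\alpha,\theta)}}{|x|^{\alpha}}
=\bigl(|u|^{p^*_s(0,\theta)}\bigr)^{t}\left(\frac{|u|^{p}}{|x|^{sp+\theta}}\right)^{1-t}
\end{equation*}
with the unique choice $t=(sp+\theta-\alpha)/(sp+\theta)\in(0,1)$ that balances simultaneously the power of $u$ and the power of $|x|$, and then invoke the two endpoint estimates. A direct computation shows that $tp^*_s(0,\theta)+(1-t)p=p^*_s(\alpha,\theta)$, so that taking the $p/p^*_s(\alpha,\theta)$--th power of both sides yields exactly the target constant times $[u]_{\dot W^{s,p}_\theta}^{p}$.

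The main obstacle I foresee is handling the asymmetric two--parameter weight $|x|^{\theta_{1}}|y|^{\theta_{2}}$ inside the Gagliardo seminorm when $\theta_{1}\neq\theta_{2}$: the integrand loses its symmetry in $(x,y)$, so symmetric rearrangement techniques (such as Riesz rearrangement or the Polya--Szeg\H{o} principle) do not apply directly. To overcome this, I would follow the dyadic splitting strategy of Nguyen--Squassina: decompose the domain of integration into annuli $\{2^{j}\leqslant|x|<2^{j+1}\}$, use the scaling invariance $u_{\lambda}(x)=\lambda^{(N-sp-\theta)/p}u(\lambda x)$, which fixes both sides of the inequality simultaneously, to normalize each dyadic contribution to an unweighted estimate at unit scale, and then reassemble via a summation argument with geometrically decaying constants. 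An alternative route would be to adapt the weighted Stein--Weiss inequality together with an $A_{p}$--weight analysis as in Abdellaoui--Bentifour. Either path, combined with careful tracking of the constants through the interpolation and scaling, yields the sharp dependence $C=C(N,\alpha,\theta)$ independent of $u$.
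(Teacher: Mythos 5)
The paper does not prove this lemma at all; it cites it directly from Nguyen--Squassina and Abdellaoui--Bentifour, so there is no ``paper's own proof'' to compare against, and your attempt is already more detailed than what the paper offers. On its own merits, your interpolation argument is mathematically sound: with $t=(sp+\theta-\alpha)/(sp+\theta)\in(0,1)$, the pointwise factorization
\begin{equation*}
\frac{|u|^{p^*_s(\alpha,\theta)}}{|x|^{\alpha}}
=\bigl(|u|^{p^*_s(0,\theta)}\bigr)^{t}
\Bigl(\frac{|u|^{p}}{|x|^{sp+\theta}}\Bigr)^{1-t}
\end{equation*}
balances both the weight and the power of $u$ (a direct check gives $(1-t)(sp+\theta)=\alpha$ and $tp^*_s(0,\theta)+(1-t)p=p^*_s(\alpha,\theta)$), so H\"older with exponents $1/t$ and $1/(1-t)$ reduces the general case to the two endpoints $\alpha=0$ (weighted Sobolev) and $\alpha=sp+\theta$ (weighted Hardy, since $p^*_s(sp+\theta,\theta)=p$). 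Your observation that the hypothesis simplifies to $\alpha<N$ is also correct. The one caveat worth underlining is that the reduction shifts, but does not eliminate, the real work: both endpoint inequalities must be established for the \emph{weighted} Gagliardo seminorm with the asymmetric kernel $|x|^{\theta_1}|x-y|^{N+sp}|y|^{\theta_2}$, and these are exactly the content of the cited references; the classical (unweighted) fractional Sobolev inequality and the mere definition of $\gamma_H$ do not by themselves give you those endpoints unless you already know $\gamma_H>0$, which is itself a theorem at the same level of difficulty. So your proposal is a clean and correct \emph{reduction} to two endpoint theorems, and as such it is a legitimate, even pedagogically useful, way to organize the argument, provided the endpoints are attributed to the literature as you indicate.
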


We note that the norm  $\| \cdot \|$ is comparable with the Gagliardo seminorm $[\,\cdot\,]_{\Dot{W}^{s,p}_{\theta}(\mathbb{R}^N)}$ as stated in the next result.
\begin{corollary}
Under the hypotheses of Lemma~\ref{lemma:2.2chineses:bis},
if $\gamma < \gamma _H$ then
\begin{align}
    \label{des:2.4chineses:bis}
    \begin{split}
       {}&\Bigl(1-\frac{\gamma _+}{\gamma _H}\Bigr) \iint_{\mathbb{R}^{2N}}      \frac{|u(x)-u(y)|^{p}}{|x|^{\theta _1}|x-y|^{N+sp}|y|^{\theta _2}}\dd x \dd y \\
       &\quad \leqslant \|u\|^p \leqslant \Bigl(1+\frac{\gamma_-}{\gamma_H}\Bigr)\iint_{\mathbb{R}^{2N}}      \frac{|u(x)-u(y)|^{p}}{|x|^{\theta _1}|x-y|^{N+sp}|y|^{\theta _2}}\dd x \dd y,
    \end{split}
\end{align}
where $\gamma_{\pm} = \max \{\pm \gamma,0\}$.
\end{corollary}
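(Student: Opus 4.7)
\smallskip

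The plan is to derive both inequalities directly from the definition of $\gamma_H$, using only the identity
\begin{align*}
\|u\|^{p} = [u]_{\dot{W}^{s,p}_{\theta}(\mathbb{R}^{N})}^{p} - \gamma\int_{\mathbb{R}^{N}}\frac{|u|^{p}}{|x|^{sp+\theta}}\dd x
\end{align*}
together with a short case distinction on the sign of $\gamma$. The key analytic input is the variational characterization of $\gamma_H$, which immediately yields the sharp Hardy bound
\begin{align*}
\int_{\mathbb{R}^{N}}\frac{|u|^{p}}{|x|^{sp+\theta}}\dd x \;\leqslant\; \frac{1}{\gamma_H}\,[u]_{\dot{W}^{s,p}_{\theta}(\mathbb{R}^{N})}^{p} \qquad \text{for every } u \in \dot{W}^{s,p}_{\theta}(\mathbb{R}^{N}).
\end{align*}
Nothing deeper is used; in particular Lemma~\ref{lemma:2.2chineses:bis} is invoked only implicitly, to guarantee that the Hardy quotient is well defined on $\dot{W}^{s,p}_{\theta}(\mathbb{R}^{N})$ and that $\gamma_H > 0$, so that the assumption $\gamma < \gamma_H$ is meaningful.

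First I would treat the case $\gamma \geqslant 0$, so that $\gamma_+ = \gamma$ and $\gamma_- = 0$. The subtracted term is nonnegative, giving at once $\|u\|^{p} \leqslant [u]_{\dot{W}^{s,p}_{\theta}(\mathbb{R}^{N})}^{p} = \bigl(1+\gamma_-/\gamma_H\bigr)[u]_{\dot{W}^{s,p}_{\theta}(\mathbb{R}^{N})}^{p}$, which is the right inequality. Plugging the Hardy bound into the identity above from below yields
\begin{align*}
\|u\|^{p} \;\geqslant\; [u]_{\dot{W}^{s,p}_{\theta}(\mathbb{R}^{N})}^{p} - \frac{\gamma}{\gamma_H}\,[u]_{\dot{W}^{s,p}_{\theta}(\mathbb{R}^{N})}^{p} = \Bigl(1-\frac{\gamma_+}{\gamma_H}\Bigr)[u]_{\dot{W}^{s,p}_{\theta}(\mathbb{R}^{N})}^{p},
\end{align*}
which is the left inequality and is strictly positive thanks to $\gamma < \gamma_H$.

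The case $\gamma < 0$ is symmetric: now $\gamma_+ = 0$ and $\gamma_- = -\gamma > 0$, the subtracted term becomes additive, and the same two bounds exchange roles. The lower estimate reduces to $\|u\|^{p}\geqslant [u]_{\dot{W}^{s,p}_{\theta}(\mathbb{R}^{N})}^{p} = (1-\gamma_+/\gamma_H)[u]_{\dot{W}^{s,p}_{\theta}(\mathbb{R}^{N})}^{p}$, while applying the Hardy bound from above gives $\|u\|^{p} \leqslant (1+|\gamma|/\gamma_H)[u]_{\dot{W}^{s,p}_{\theta}(\mathbb{R}^{N})}^{p} = (1+\gamma_-/\gamma_H)[u]_{\dot{W}^{s,p}_{\theta}(\mathbb{R}^{N})}^{p}$. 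Combining the two cases yields~\eqref{des:2.4chineses:bis}.

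There is no real obstacle in this argument; the only thing worth being careful about is that the bounds are simultaneously sharp and that the constant in front of the Gagliardo seminorm in the lower inequality is strictly positive, which is exactly what the assumption $\gamma < \gamma_H$ secures. In particular, the two-sided estimate shows that $\|\cdot\|$ and $[\,\cdot\,]_{\dot{W}^{s,p}_{\theta}(\mathbb{R}^{N})}$ are equivalent norms on $\dot{W}^{s,p}_{\theta}(\mathbb{R}^{N})$ whenever $\gamma < \gamma_H$, which is how the corollary is used later in the paper.
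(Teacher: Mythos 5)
Your proof is correct and is exactly the argument the paper implicitly has in mind; the corollary is stated without an explicit proof precisely because it follows by unwinding the definition of $\|u\|^p$ and the variational characterization of $\gamma_H$, together with the sign split $\gamma = \gamma_+ - \gamma_-$, as you do.
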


We can deduce another useful inequality.
\begin{corollary}
Under the hypotheses of Lemma~\ref{lemma:2.2chineses:bis} we have
\begin{align}
\label{limitacaopsharp}
    \iint_{\mathbb{R}^{2N}} \frac{|u(x)|^{p^{\sharp}_{s}(\delta,\theta,\mu)}
    |u(y)|^{p^{\sharp}_{s}(\delta,\theta,\mu)}}{|x|^{\delta}|x-y|^{\mu}|y|^{\delta}}\dd x\dd y
    & \leqslant 
    C(N,\delta,\mu)
    \|u\|^{2 p^{\sharp}_{s}(\delta,\theta,\mu)}_{\Dot{W}^{s,p}_{\theta}(\mathbb{R}^N)}.
\end{align}
\end{corollary}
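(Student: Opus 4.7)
The plan is to combine the Stein-Weiss weighted Hardy-Littlewood-Sobolev inequality with the Caffarelli-Kohn-Nirenberg inequality from Lemma~\ref{lemma:2.2chineses:bis} and the equivalence \eqref{des:2.4chineses:bis}. First, I would invoke the Stein-Weiss inequality in its symmetric form: for $F,G \geqslant 0$ and parameters $\delta,\delta,\mu$ satisfying $2\delta+\mu\leqslant N$, there exists a constant $C(N,\delta,\mu)$ such that
\[
\iint_{\mathbb{R}^{2N}} \frac{F(x)G(y)}{|x|^{\delta}|x-y|^{\mu}|y|^{\delta}}\dd{x}\dd{y} \leqslant C(N,\delta,\mu)\,\|F\|_{L^{r}(\mathbb{R}^{N})}\|G\|_{L^{r}(\mathbb{R}^{N})},
\]
where $r>1$ is determined by the scaling relation $2/r + (2\delta+\mu)/N = 2$, i.e.\ $r = N/(N-\delta-\mu/2)$. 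The admissibility conditions $r > 1$ and $\delta < N/r' = \delta + \mu/2$ reduce to $\mu > 0$ and $2\delta+\mu<N$, both guaranteed by the standing hypotheses.

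Applying this estimate with $F(x)=G(x)=|u(x)|^{p_{s}^{\sharp}(\delta,\theta,\mu)}$, and performing the key algebraic check
\[
r\cdot p_{s}^{\sharp}(\delta,\theta,\mu) = \frac{N}{N-\delta-\mu/2}\cdot\frac{p(N-\delta-\mu/2)}{N-sp-\theta} = \frac{Np}{N-sp-\theta} = p_{s}^{*}(0,\theta),
\]
I would obtain
\[
\iint_{\mathbb{R}^{2N}} \frac{|u(x)|^{p_{s}^{\sharp}}|u(y)|^{p_{s}^{\sharp}}}{|x|^{\delta}|x-y|^{\mu}|y|^{\delta}}\dd{x}\dd{y} \leqslant C(N,\delta,\mu)\,\|u\|_{L^{p_{s}^{*}(0,\theta)}(\mathbb{R}^{N})}^{2p_{s}^{\sharp}},
\]
so that the weighted double integral is reduced to a purely unweighted critical Lebesgue norm of $u$. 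Next, Lemma~\ref{lemma:2.2chineses:bis} with $\alpha=0$ yields $\|u\|_{L^{p_{s}^{*}(0,\theta)}(\mathbb{R}^{N})}^{p}\leqslant C(N,\theta)\,[u]_{\dot{W}^{s,p}_{\theta}(\mathbb{R}^{N})}^{p}$, and raising to the power $2p_{s}^{\sharp}/p$ gives a bound on the right-hand side in terms of $[u]^{2p_{s}^{\sharp}}$. Finally, \eqref{des:2.4chineses:bis} provides $[u]_{\dot{W}^{s,p}_{\theta}}^{p}\leqslant (1-\gamma_{+}/\gamma_{H})^{-1}\|u\|_{\dot{W}^{s,p}_{\theta}}^{p}$, and assembling the three inequalities produces~\eqref{limitacaopsharp}.

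The main obstacle is essentially bookkeeping: verifying that the Stein-Weiss exponent $r$ matches the Sobolev critical exponent $p_{s}^{*}(0,\theta)$ through the identity above, and checking that every admissibility condition for Stein-Weiss follows from the standing hypotheses $0<\mu<N$, $2\delta+\mu<N$, and $0<\delta<sp+\theta<N$. Note that one could equivalently route through the embedding in Lemma~\ref{lemma:imersoes}(4), since the Morrey-type norms and the critical Lebesgue norm share the same dilation invariance, but the direct Caffarelli-Kohn-Nirenberg route is shorter. No deeper tool beyond the two already recorded inequalities and the classical Stein-Weiss bound is required.
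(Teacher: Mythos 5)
The paper states this corollary without supplying a proof, and your argument is correct and is the expected route: the symmetric Stein-Weiss inequality with $r = N/(N-\delta-\mu/2)$ and $F=G=|u|^{p_s^\sharp}$, the exponent identity $r\,p_s^\sharp = p_s^*(0,\theta)$, the $\alpha=0$ case of Lemma~\ref{lemma:2.2chineses:bis}, and the norm comparison~\eqref{des:2.4chineses:bis}. The admissibility checks you perform (namely $r>1$, $\delta < N/r' = \delta + \mu/2$, and $2\delta+\mu\leqslant N$) all follow from the standing hypotheses $0<\mu<N$ and $2\delta+\mu<N$, so the chain closes; only the label $C(N,\delta,\mu)$ slightly understates the constant's dependence on $s,p,\theta,\gamma$, but the paper's own statement carries the same shorthand.
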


Based on the embeddings~\eqref{imersoes} 
we establish an improved weighted fractional Caffarelli-Kohn-Nirenberg inequality whose proof can be found in~\cite{assunccao2023fractional}; see also~\cite{MR3216834}.

\begin{lemma}
(Fractional Caffarelli-Kohn-Nirenberg inequality)
\label{prop:1.3chineses:bis}
    Let $s\in (0,1)$ and $0< \beta < sp +\theta< N$. Then there exists $C = C(N, s, \beta)>0$ such that for any $\zeta \in (\Bar{\zeta}, 1)$ and for any $q\in [1, p^*_s)$, for all $u \in \Dot{W}^{s,p}_{\theta}(\mathbb{R}^N)$ it holds
    \begin{align}
    \label{des:1.10chineses:bis}   \biggl(\int_{\mathbb{R}^N}\frac{|u(y)|^{p^*_s}}{|y|^{\beta}}\dd y\bigg)^{\frac{1}{p^*_s}} \leqslant \|u\|_{\Dot{W}^{s,p}_{\theta}}^{\zeta} \|u\|^{1-\zeta}_{L_M^{q,\frac{N-sp-\theta}{p}q+qr}(\mathbb{R}^N, |y|^{-qr})}
    \end{align}
    where $\Bar{\zeta} = \max 
    \{p/p^*_s, (p^*_s(0,\theta)-1)/p^*_s\}>0$ and $r=\beta/p^*_s$.
    \end{lemma}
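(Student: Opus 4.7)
The plan is to establish this improved Caffarelli--Kohn--Nirenberg inequality by combining the embedding chain in Lemma~\ref{lemma:imersoes}(4) with a level-set interpolation argument in the spirit of Gérard--Meyer--Rivière, adapted here to the weighted fractional setting; this is the same scheme used by the authors in~\cite{assunccao2023fractional}, and my proof follows that blueprint.

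First, by the homogeneity of the Gagliardo seminorm, the weighted $L^{p_s^*}$ norm, and the weighted Morrey norm under the scaling $u \mapsto \tau u$, I would reduce to the normalized situation $\|u\|_{\dot W^{s,p}_\theta}=1$ and $\|u\|_{L_M^{q,(N-sp-\theta)q/p+qr}(\mathbb{R}^N,|y|^{-qr})}=1$, and rephrase the target as a uniform bound on $(\int_{\mathbb{R}^N}|u|^{p_s^*}/|y|^\beta\dd y)^{1/p_s^*}$. Next, I would split the weighted integral dyadically along the level sets
\begin{equation*}
A_k := \{y\in\mathbb{R}^N : 2^{k-1}<|u(y)|\leqslant 2^k\},\qquad k\in\mathbb{Z},
\end{equation*}
and set $\mu(E)=\int_E |y|^{-\beta}\dd y$, so that $\int_{\mathbb{R}^N}|u|^{p_s^*}/|y|^\beta\dd y \leqslant \sum_k 2^{k p_s^*}\mu(A_k)$.

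The heart of the argument is to estimate $\mu(A_k)$ in two complementary ways. On one hand, Chebyshev's inequality combined with the Hardy--Sobolev embedding in Lemma~\ref{lemma:2.2chineses:bis} and the continuous embedding $\dot W^{s,p}_\theta\hookrightarrow L^{p_s^*}(\mathbb{R}^N,|y|^{-\beta})$ from Lemma~\ref{lemma:imersoes}(4) give a \emph{small-scale bound}
\begin{equation*}
\mu(A_k)\leqslant C\, 2^{-k p_s^*}\|u\|_{\dot W^{s,p}_\theta}^{p_s^*},
\end{equation*}
which is efficient when $2^k$ is large. On the other hand, using the second embedding in~\eqref{imersoes} together with a localized Chebyshev argument on a suitably chosen cover of $A_k$ by balls on which the Morrey averages are controlled, I would obtain a \emph{large-scale bound}
\begin{equation*}
\mu(A_k)\leqslant C\, 2^{-kq}\|u\|_{L_M^{q,(N-sp-\theta)q/p+qr}(\mathbb{R}^N,|y|^{-qr})}^{q}\cdot(\text{scale factor}),
\end{equation*}
which is efficient when $2^k$ is small. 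I would then split the dyadic sum at an index $k_0$, apply the first bound for $k>k_0$ and the second for $k\leqslant k_0$, and optimize the cut-off $k_0$ to produce the product form $\|u\|_{\dot W^{s,p}_\theta}^{\zeta p_s^*}\|u\|_{M}^{(1-\zeta)p_s^*}$.

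The main obstacle, as I see it, is the matching step: one must align the weights $|y|^{-\beta}$, $|y|^{-qr}$ and $|y|^{-pr}$ that appear along the embedding chain and verify that the resulting Marcinkiewicz-type bound on $\mu(A_k)$ via the Morrey norm carries the correct scale-invariant power. This is precisely where the identity $r=\beta/p_s^*$ enters and where the threshold $\bar\zeta=\max\{p/p_s^*,(p_s^*(0,\theta)-1)/p_s^*\}$ is forced: the first term $p/p_s^*$ arises from the Sobolev-side decay $2^{-kp_s^*}$, while the second comes from the summability of the geometric series built from the Morrey-side decay $2^{-kq}$ when $q\to 1$. Once the dyadic sum converges for $\zeta\in(\bar\zeta,1)$, the inequality~\eqref{des:1.10chineses:bis} follows by taking $p_s^*$-th roots and undoing the normalization.
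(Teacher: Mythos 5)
Your proposed level-set decomposition, as stated, has a structural gap that is not a matter of detail but sinks the convergence of the dyadic sum. The Sobolev-side Chebyshev estimate you derive,
\begin{equation*}
\mu(A_k)\leqslant C\,2^{-kp^*_s}\|u\|_{\dot W^{s,p}_\theta}^{p^*_s},
\end{equation*}
yields only $2^{kp^*_s}\mu(A_k)\leqslant C\|u\|_{\dot W^{s,p}_\theta}^{p^*_s}$ with \emph{no decay in $k$}. Hence the partial sum $\sum_{k>k_0}2^{kp^*_s}\mu(A_k)$ that you intend to control with this bound is a sum of infinitely many terms each bounded below by a fixed positive constant (whenever $u\not\equiv 0$), so the split at a cutoff $k_0$ cannot close. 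In the Gérard--Meyer--Rivière scheme you invoke, the level-set decomposition is \emph{paired with} a simultaneous frequency (Littlewood--Paley) decomposition: for a threshold of order $2^k$ one writes $u=u_{\mathrm{low}}+u_{\mathrm{high}}$ with the low-frequency part bounded in $L^\infty$ by a Besov-/Morrey-type norm and the high-frequency part small in $L^{p}$; this is what produces a genuine geometric decay of $2^{kp^*_s}\mu(A_k)$ once the cutoff frequency is tuned to $k$. Your proposal has only the level sets, so the essential mechanism is missing. Similarly, your Morrey-side estimate $\mu(A_k)\lesssim 2^{-kq}\|u\|_M^q\cdot(\text{scale factor})$ cannot be obtained as you describe: the weighted Morrey norm bounds averages $R^{\gamma+\lambda-N}\int_{B_R(x)}|u|^q|y|^{-\lambda}\dd y$ over \emph{single} balls but does not propagate to a bound on $\mu(A_k)=\int_{A_k}|y|^{-\beta}\dd y$ because (i) the weight exponents $\lambda=qr$ (or $pr$) and $\beta=p^*_s r$ do not match when $q<p^*_s$, and (ii) summing the ball-by-ball bounds over a cover of $A_k$ introduces uncontrolled overlap and radius factors exactly where you leave a placeholder ``$(\text{scale factor})$''.

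The paper itself does not give a self-contained argument here; it cites the authors' earlier work~\cite{assunccao2023fractional} and Palatucci--Pisante~\cite{MR3216834}, where the improved Sobolev inequality is proved by a Littlewood--Paley/Besov interpolation, and the Morrey norm enters only through the embedding chain recorded in item~(4) of Lemma~\ref{lemma:imersoes}. Your sketch is therefore not the ``same blueprint''; it is a different and, in its present form, incorrect route. To salvage the level-set idea you would need to interleave it with a frequency (or dilation) decomposition of $u$, pick the cutoff frequency for each $k$ so that the low-frequency tail is below $2^{k-1}$ in sup-norm using the Morrey/Besov bound, and estimate the remaining high-frequency piece in $L^p$ against the Gagliardo seminorm; only then does the sum $\sum_k 2^{kp^*_s}\mu(A_k)$ converge and produce the exponent $\zeta$ with the threshold $\bar\zeta$ of the statement.
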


Now we state a result about local convergence; see~\cite[Lemma $2.3$]{Li_2020}.

\begin{lemma}
\label{lemma:2.3chineses}
Let $s \in (0,1)$ and $0<r<s+ \frac{\theta}{p}<\frac{N}{p}$. If $\{u_k\}$ is a bounded sequence in $\Dot{W}^{s,p}_{\theta}(\mathbb{R}^N)$ and $u_k \rightharpoonup u$ in $\Dot{W}^{s,p}_{\theta}(\mathbb{R}^N)$, then as $k\to+\infty$,
\begin{align*}
    \frac{u_k}{|x|^{r+\frac{\theta r}{sp}}} \to \frac{u}{|x|^{r+\frac{\theta r}{sp}}} \; \textup{in $L^p_{\loc}(\mathbb{R}^N)$}.
\end{align*}
\end{lemma}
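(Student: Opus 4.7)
The plan is to reduce the problem to the difference $w_{k}=u_{k}-u$, which satisfies $w_{k}\rightharpoonup 0$ in $\dot{W}^{s,p}_{\theta}(\mathbb{R}^N)$ and is uniformly bounded there, say $[w_{k}]_{\dot{W}^{s,p}_{\theta}}\leqslant M$. Writing $\sigma:=r+\theta r/(sp)$, so that $p\sigma=r(sp+\theta)/s$, the claim becomes that for every compact set $K\subset\mathbb{R}^{N}$,
\[
\int_{K}\frac{|w_{k}(x)|^{p}}{|x|^{p\sigma}}\dd{x}\longrightarrow 0\qquad (k\to\infty).
\]
For each $\rho>0$ I would split $K=(K\cap B_{\rho})\cup(K\setminus B_{\rho})$ and treat the two pieces by disjoint tools: a compact embedding off the origin, and a weighted Hardy-type bound near the origin.

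On the exterior piece $K\setminus B_{\rho}$, the factors $|x|^{-\theta_{1}}$ and $|y|^{-\theta_{2}}$ appearing in the Gagliardo double integral are uniformly bounded, so the restriction of $\dot{W}^{s,p}_{\theta}$ to any tubular neighbourhood of this set is comparable with the unweighted $W^{s,p}$. The standard fractional Rellich--Kondrachov compactness then yields, up to a subsequence, $w_{k}\to 0$ in $L^{p}(K\setminus B_{\rho})$, and since $|x|^{-p\sigma}\leqslant \rho^{-p\sigma}$ on this set,
\[
\int_{K\setminus B_{\rho}}\frac{|w_{k}|^{p}}{|x|^{p\sigma}}\dd{x}\leqslant \rho^{-p\sigma}\|w_{k}\|_{L^{p}(K\setminus B_{\rho})}^{p}\longrightarrow 0.
\]

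For the near-origin piece $K\cap B_{\rho}$ the key input is the fractional Hardy inequality furnished by Lemma~\ref{lemma:2.2chineses:bis} together with the corollary~\eqref{des:2.4chineses:bis}, which gives $\int|w|^{p}/|x|^{sp+\theta}\dd{x}\leqslant C\,[w]_{\dot{W}^{s,p}_{\theta}}^{p}$. In the regime $p\sigma\leqslant sp+\theta$ I would simply factor
\[
\frac{|w_{k}|^{p}}{|x|^{p\sigma}}=\frac{|w_{k}|^{p}}{|x|^{sp+\theta}}\cdot|x|^{sp+\theta-p\sigma},
\]
bound the second factor by $\rho^{sp+\theta-p\sigma}$ on $B_{\rho}$, and obtain the uniform estimate $C\rho^{sp+\theta-p\sigma}M^{p}$. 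In the borderline range (possible because the hypothesis $r<s+\theta/p$ admits $r\geqslant s$) this elementary factorization fails, and I would instead invoke the weighted Caffarelli--Kohn--Nirenberg estimate of Lemma~\ref{prop:1.3chineses:bis} together with Hölder's inequality, using an intermediate exponent $\alpha\in(0,sp+\theta]$ tuned so that the resulting auxiliary weight is locally integrable and contributes a strictly positive power of $\rho$; this yields a uniform bound of the form $C\rho^{\kappa}M^{p}$ with $\kappa>0$.

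Assembling the two pieces is then routine: given $\varepsilon>0$, first fix $\rho$ small enough that the near-origin contribution is below $\varepsilon/2$ uniformly in $k$, and then, for this $\rho$, take $k$ large so that the exterior contribution drops below $\varepsilon/2$. A standard subsequence argument then upgrades the convergence along subsequences to the full sequence. The most delicate step is the uniform estimate near the origin in the borderline regime $r\in[s,s+\theta/p)$: there the Hardy factorization breaks down because the extra weight $|x|^{sp+\theta-p\sigma}$ becomes singular at $0$, and the art is to balance the CKN exponent $\alpha$ against the singularity $|x|^{-p\sigma}$ so that a strictly positive power of $\rho$ genuinely survives, which is precisely where the threshold $s+\theta/p$ in the hypothesis enters.
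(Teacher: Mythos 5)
The paper does not actually prove this lemma; it only cites it from an external reference, so there is no in-house argument to compare against, and the proposal has to stand on its own. Your skeleton --- split $K$ into $K\setminus B_\rho$ (compact embedding away from the singularity) and $K\cap B_\rho$ (a uniform-in-$k$ bound that is small in $\rho$), and reduce to $w_k=u_k-u\rightharpoonup 0$ --- is the right one, and both the Rellich--Kondrachov step on the exterior piece and the Hardy factorization in the regime $p\sigma\leqslant sp+\theta$ are correct.

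The gap is in the "borderline regime". You claim that for $r\in[s,s+\theta/p)$, where $p\sigma=r(sp+\theta)/s>sp+\theta$, one can still extract a positive power of $\rho$ from the CKN inequality plus H\"older by tuning an intermediate exponent $\alpha$. This cannot work. Carrying the H\"older step out explicitly with $q=p^*_s(\alpha,\theta)/p$ and $q'=(N-\alpha)/(sp+\theta-\alpha)$, the auxiliary factor is $\bigl(\int_{B_\rho}|x|^{-t}\dd x\bigr)^{1/q'}$ with
\[
t=\frac{p\sigma(N-\alpha)-\alpha(N-sp-\theta)}{sp+\theta-\alpha},
\]
and the requirement $t<N$ reduces, after clearing denominators, to $p\sigma<sp+\theta$ --- independently of $\alpha\in(0,sp+\theta)$. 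So the CKN/H\"older route reproduces exactly the same threshold $r<s$ as the elementary Hardy factorization; nothing is gained. Worse, for $p\sigma>sp+\theta$ the local integral $\int_{B_\rho}|w|^p/|x|^{p\sigma}\dd x$ need not even be finite for a general $w\in\dot W^{s,p}_\theta$ (take $w\sim|x|^\gamma$ near the origin with $\gamma$ just above the Hardy-critical exponent $(sp+\theta-N)/p$: then $w$ lies in the space but $w/|x|^\sigma\notin L^p_{\loc}$ once $\sigma>(sp+\theta)/p$). So the statement with the weight written as $|x|^{r+\theta r/(sp)}$ cannot hold on the full stated range $r<s+\theta/p$ when $\theta>0$. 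Note, however, that the paper actually \emph{applies} the lemma (in Proposition~\ref{minimizadores:sistemas}) with the weight $|x|^{r}$, not $|x|^{r+\theta r/(sp)}$; with that weight the hypothesis $r<s+\theta/p$ is exactly $pr<sp+\theta$, your Hardy factorization gives the interior bound $C\rho^{sp+\theta-pr}M^p$, and the proof closes cleanly. You should flag the likely mis-transcription in the exponent and carry out the argument with weight $|x|^r$; the CKN/H\"older digression should be dropped, since it neither extends the range nor is needed.
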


\begin{proposition}
\label{minimizadores:sistemas}
    For $s \in (0,1)$ the best constants $S^{\sharp}$ and $S^*$ verify  the following items.
    \begin{enumerate}[wide]
        \item If $0<\alpha<sp+\theta<N, \mu \in (0,N)$ and $\gamma < \gamma _H$, then $S^{\sharp}$ is attained in ${W}$;
        \item If $0<\beta<sp+\theta<N$ and $\gamma < \gamma _H$, then $S^*$ is attained in ${W}$.
    \end{enumerate}
\end{proposition}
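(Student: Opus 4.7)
My plan is to reduce both system minimization problems to their scalar counterparts on a single copy of $\dot{W}^{s,p}_{\theta}(\mathbb{R}^{N})$, use the structural form of the functionals to test with pairs of the type $(\alpha U,\beta U)$, and match the resulting upper bound with a matching lower bound obtained from subadditivity (for $S^{\sharp}$) and a weighted Hölder inequality (for $S^{*}$). Introduce the scalar best constants
\begin{equation*}
\Lambda^{*}\coloneqq\inf_{u\in\dot{W}^{s,p}_{\theta}\setminus\{0\}}\frac{\|u\|^{p}}{\bigl(\int_{\mathbb{R}^N}|u|^{p_s^{*}}/|x|^{\beta}\dd x\bigr)^{p/p_s^{*}}},\qquad \Lambda^{\sharp}\coloneqq\inf_{u\in\dot{W}^{s,p}_{\theta}\setminus\{0\}}\frac{\|u\|^{p}}{\bigl(\iint_{\mathbb{R}^{2N}}\tfrac{|u(x)|^{p_s^{\sharp}}|u(y)|^{p_s^{\sharp}}}{|x|^{\delta}|x-y|^{\mu}|y|^{\delta}}\dd{x}\dd{y}\bigr)^{p/(2p_s^{\sharp})}}.
\end{equation*}
Assuming for the moment that each scalar infimum is attained by some nontrivial $U\in\dot{W}^{s,p}_{\theta}$, I will produce minimizers of the system problems explicitly.

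For item (1), set $A=\iint|u(x)|^{p_s^{\sharp}}|u(y)|^{p_s^{\sharp}}/(|x|^{\delta}|x-y|^{\mu}|y|^{\delta})\dd x\dd y$ and $B$ analogously for $v$. Because $p<2p_s^{\sharp}$, the map $\xi\mapsto\xi^{p/(2p_s^{\sharp})}$ is subadditive on $[0,\infty)$, so for any $(u,v)\in W\setminus\{0\}$
\begin{equation*}
\frac{\|(u,v)\|_{W}^{p}}{Q^{\sharp}(u,v)^{p/(2p_s^{\sharp})}}=\frac{\|u\|^{p}+\|v\|^{p}}{(A+B)^{p/(2p_s^{\sharp})}}\geqslant\frac{\Lambda^{\sharp}\bigl(A^{p/(2p_s^{\sharp})}+B^{p/(2p_s^{\sharp})}\bigr)}{(A+B)^{p/(2p_s^{\sharp})}}\geqslant\Lambda^{\sharp},
\end{equation*}
and the test pair $(U,0)\in W\setminus\{0\}$ saturates both inequalities. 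Hence $S^{\sharp}=\Lambda^{\sharp}$ is attained.

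For item (2), set $A=\int|u|^{p_s^{*}}/|x|^{\beta}\dd x$ and $B=\int|v|^{p_s^{*}}/|x|^{\beta}\dd x$. Since $a+b=p_s^{*}$, the weighted Hölder inequality with conjugate exponents $p_s^{*}/a$ and $p_s^{*}/b$ yields $\int|u|^{a}|v|^{b}/|x|^{\beta}\dd x\leqslant A^{a/p_s^{*}}B^{b/p_s^{*}}$. Parametrizing $t=(B/A)^{1/p_s^{*}}$ and scaling out the common factor $A$ gives
\begin{equation*}
\frac{\|(u,v)\|_{W}^{p}}{Q^{*}(u,v)^{p/p_s^{*}}}\geqslant\Lambda^{*}\,h(t),\qquad h(t)\coloneqq\frac{1+t^{p}}{(1+t^{p_s^{*}}+\eta t^{b})^{p/p_s^{*}}}.
\end{equation*}
The function $h$ is continuous on $[0,\infty)$ with $h(0)=\lim_{t\to\infty}h(t)=1$, so it attains its infimum at some $t_{0}\in[0,\infty)$. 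Choosing the pair $(U,t_{0}U)$ with $U$ a scalar extremal of $\Lambda^{*}$ simultaneously saturates the weighted Hölder inequality (since $v$ is a scalar multiple of $u$) and the scalar bound, so $S^{*}=h(t_{0})\Lambda^{*}$ is attained by $(U,t_{0}U)\in W\setminus\{0\}$.

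The genuine difficulty is the attainment of the scalar constants $\Lambda^{*}$ and $\Lambda^{\sharp}$, where the critical exponents cause the standard Sobolev embedding to fail to be compact. For each, the template is the following: pick a minimizing sequence $\{u_{n}\}$ with $\|u_{n}\|^{p}=1$, and use the dilation invariance $u_{n}^{\lambda}(x)=\lambda^{(N-sp-\theta)/p}u_{n}(\lambda x)$—which preserves both numerator and denominator—to normalize the sequence so that the Morrey seminorm on the right-hand side of the improved Caffarelli–Kohn–Nirenberg inequality (Lemma~\ref{prop:1.3chineses:bis}) stays bounded below. Extract a weak limit $u_{n}\rightharpoonup u_{0}$ in $\dot{W}^{s,p}_{\theta}$; Lemma~\ref{lemma:2.3chineses} supplies strong local convergence of the Hardy-weighted terms, while the embedding chain in Lemma~\ref{lemma:imersoes} rules out vanishing (as vanishing would kill the Morrey norm and contradict Lemma~\ref{prop:1.3chineses:bis}). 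A Brezis–Lieb splitting of the critical functional, together with the weak lower semicontinuity of the Gagliardo seminorm and strict monotonicity of the quotient under loss of mass, then forces the residual term to vanish and identifies $u_{0}\neq 0$ as the sought extremal. This compactness step, driven by the Morrey-space refinement, is the main obstacle; everything else is algebraic.
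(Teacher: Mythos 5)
Your algebraic reduction is clean, correct, and genuinely different from the paper's route. The paper applies a Lévy-type concentration/dilation argument directly to minimizing sequences $\{(u_k,v_k)\}\subset W$: normalize $Q^\sharp$ (resp. $Q^*$) to one, rescale by a single $\lambda_k>0$ chosen so that a Morrey seminorm is nearly captured on a unit ball centered at a bounded point, extract a weak limit with \emph{both} components nonzero via Lemma~\ref{lemma:2.3chineses}, and close with Br\'ezis--Lieb plus strict subadditivity of $\xi\mapsto\xi^{q}$ ($0<q<1$). You instead factor the problem through the scalar Rayleigh quotients $\Lambda^\sharp$, $\Lambda^*$. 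For $S^\sharp$ the identity $\|(u,v)\|_W^p=\|u\|^p+\|v\|^p$ and $Q^\sharp(u,v)=A+B$ with no cross term make the subadditivity bound $A^q+B^q\geqslant(A+B)^q$ give $S^\sharp\geqslant\Lambda^\sharp$, and the semitrivial test pair $(U,0)$ gives equality. For $S^*$ the coupled term requires the weighted H\"older step and the one-variable reduction to $h(t)$; since $h$ is continuous on $[0,\infty)$ with $h(0)=\lim_{t\to\infty}h(t)=1$, it attains its infimum at some $t_0$, and $(U,t_0U)$ saturates both H\"older and the scalar bound simultaneously. All of that algebra checks out (I verified the equality cases and the limits on $h$).

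Two points worth making. First, what each approach buys: your reduction isolates the compactness step to a single scalar problem and produces the explicit identities $S^\sharp=\Lambda^\sharp$ and $S^*=\Lambda^*\min_t h(t)$; it also sidesteps a delicate point in the paper, namely that a \emph{single} dilation parameter $\lambda_k$ is chosen from the $u_k$-component and then used for $v_k$ as well, which requires justification if the two components concentrate at different scales. On the other hand, the paper's direct argument yields a minimizer with $\tilde u\not\equiv 0$ \emph{and} $\tilde v\not\equiv 0$, whereas your $S^\sharp$-minimizer $(U,0)$ is semitrivial; the Proposition as stated only asks for attainment in $W\setminus\{0\}$, so this is admissible, but it is a weaker conclusion than the paper actually establishes. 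Second, the substantive analysis has not been avoided: the scalar attainment of $\Lambda^\sharp$ and $\Lambda^*$ is precisely the content you defer, and your three-sentence template for it is the paper's own concentration argument restated. The sketch also has minor imprecisions: the Morrey seminorm is scale-invariant, so the dilation does not ``normalize'' it from below; rather the lower bound comes from Lemma~\ref{prop:1.3chineses:bis} together with $\|u_n\|=1$ and the normalization of the critical integral, and the dilation is used to anchor the near-supremizing ball at unit radius and bounded center. That detail would need to be spelled out (or a citation to the scalar result in the prior work supplied) for the proof to be complete.
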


\begin{proof}
    \begin{enumerate}[wide]
        \item If $0< \alpha <sp+\theta<N$ and $\gamma < \gamma _H$, let $\{(u_k,v_k)\}_{k\in\mathbb{N}} \subset {W}$ be a minimizing sequence of $S^{\sharp}$ such that
        \begin{align}
        \label{q:igual11}
            Q^{\sharp}(u_k, v_k) = 1, \qquad \|(u_k,v_k)\|^p \to S^{\sharp}
        \end{align}
        as $k\to +\infty$.     Recall that $r=\frac{\alpha}{p^*_s(\alpha,\theta)}$. 
\begin{claim}
\label{limitacaomorey:sistemas}
We have $ C_1  \leqslant \|u_k\|_{L_{M}^{q,\frac{N-sp-\theta}{p}q+qr}(\mathbb{R}^N, |y|^{-pr})} \leqslant C_2$.
\end{claim}        
\begin{proof}
            We just use the
 embeddings~\eqref{imersoes} and the Caffarelli-Kohn-Nirenberg's inequality~\eqref{des:1.10chineses:bis}. For more details, see~\cite[Eq.~(4.2), p.~23]{assunccao2023fractional}        \end{proof}

For any $k \in \mathbb{N}$ large enough, we may find $\lambda _k >0$ and $x_k \in \mathbb{R}^N$ such that
\begin{align}
    \lambda _k ^{-sp-\theta +pr} \int_{B_{\lambda _{k} (x_k)}} \frac{|u_k(y)|^p}{|y|^{pr}}\dd y > \|u_k\|^p_{L_{M}^{p,N-sp-\theta+pr}(\mathbb{R}^N, |y|^{-pr})} -\frac{C}{2k} \geqslant C >0
\end{align}
for constants $C \in\mathbb{R}_{+}$.

Our goal is to pass to the limit as $k \to +\infty$ in the minimizing sequence. To do this, we adapt the Levy's concentration principle; more precisely, we create another sequence $\{(\tilde{u}_k,\tilde{v}_k)\}_{k\in\mathbb{N}} \subset {W}$
that will help us to control the radius and the centers of these balls. Let 
\begin{align*}
\Tilde{u}_k (x) =\lambda _{k} ^{\frac{N-sp-\theta}{p}}u_k(\lambda _{k} x) \quad \text{and} \quad \Tilde{v}_k (x) =\lambda _{k} ^{\frac{N-sp-\theta}{p}}v_k(\lambda _{k} x)
\end{align*} 
be the appropriate scaling for the class of problems that we consider and define $\Tilde{x}_{k} \coloneqq x/\lambda _{k}$ and $\Tilde{x}_{k} \coloneqq x/\lambda _{k}$. Then, using the change of variables $y=\lambda_{k}x$ with $\dd{y}=\lambda_{k}^{N}\dd{x}$ we have,
\begin{align}    \label{des:4.1chineses:sistemas}
    \int_{B_1\left(\frac{x_k}{\lambda _{k}}\right)} \frac{|\Tilde{u}_k(x)|^p}{|x|^{pr}}\dd x  \geqslant C>0 \quad \text{and} \quad    \int_{B_1\left(\frac{x_k}{\lambda _{k}}\right)} \frac{|\Tilde{v}_k(x)|^p}{|x|^{pr}}\dd x  \geqslant C>0.
\end{align}

Now we claim that $S^{\sharp}$ is invariant under the previously defined dilation.

In fact, $ Q^{\sharp}(\Tilde{u}_k,\Tilde{v}_k) = 1$. To show this property, we use the change of variables $\Bar{x}=\lambda _{k} x$ and $\Bar{y}=\lambda _{k} y$, we have 
\begin{align*}
Q^{\sharp}(\Tilde{u}_k,\Tilde{v}_k)
& = 
\iint_{\mathbb{R}^{2N}}\frac{|u_k(x)|^{p^\sharp_s}|u_k(y)|^{p^\sharp_s}}{|x|^{\delta }|x-y|^{\mu}|y|^{\delta}} \dd x \dd y
+ \iint_{\mathbb{R}^{2N}}\frac{ |v_k(y)|^{p_{s}^{\sharp}}
       |v_k(y)|^{p_{s}^{\sharp}}}{|x|^{\delta }|x-y|^{\mu}|y|^{\delta}} \dd x \dd y\\ 
& = \iint_{\mathbb{R}^{2N}}
\lambda _{ks} ^{\frac{N-sp-\theta}{p}2p^{\sharp}_s}\dfrac{|u_{k}(\lambda_{k} x)|^{p_{s}^{\sharp}}
       |u_k(\lambda_{k,1} y)|^{p_{s}^{\sharp}}}%
       {|x|^{\delta}
        |x-y|^{\mu}
        |y|^{\delta}}
\dd{x}\dd{y} \\
& \quad + \iint_{\mathbb{R}^{2N}}
\lambda _{k} ^{\frac{N-sp-\theta}{p}2p^{\sharp}_s}\dfrac{|v_{k}(\lambda_{k} x)|^{p_{s}^{\sharp}}
       |v_k(\lambda_{k} y)|^{p_{s}^{\sharp}}}%
       {|x|^{\delta}
        |x-y|^{\mu}
        |y|^{\delta}}
\dd{x}\dd{y} \\
& = 
\iint_{\mathbb{R}^{2N}}\frac{|u_k(\Bar{x})|^{p^\sharp_s}|u_k(\Bar{y})|^{p^\sharp_s}}{|\Bar{x}|^{\delta }|\Bar{x}-\Bar{y}|^{\mu}|\Bar{y}|^{\delta}} \dd \Bar{x} \dd \Bar{y}
+ \iint_{\mathbb{R}^{2N}}\frac{ |v_k(\Bar{x})|^{p_{s}^{\sharp}}
       |v_k(\Bar{y})|^{p_{s}^{\sharp}}}{|\Bar{x}|^{\delta }|\Bar{x}-\Bar{y}|^{\mu}|\Bar{y}|^{\delta}} \dd \Bar{x} \dd \Bar{y}\\ 
& =  Q^{\sharp}(u_k,v_k) = 1,
\end{align*}

Furthermore, $\|(\Tilde{u}_k,\Tilde{v}_k)\|^p\to S^{\sharp} $. In fact, we know that $\{(\Tilde{u}_k,\Tilde{v}_k)\}_{k \in \mathbb{N}}$ is a minimizing sequence for $S^{\sharp}$.
Using the same change of variables $\Bar{x}=\lambda _{k} x$ and $\Bar{y}=\lambda _{k} y$, we obtain
\begin{align*}
    \|(\Tilde{u}_k,\Tilde{v}_k)\|^p
    & = \iint_{\mathbb{R}^{2N}} \frac{|u_k(x)-u_k(y)|^p}{|x|^{\theta_1}|x-y|^{N+sp}|y|^{\theta_2}} \dd x \dd y + \iint_{\mathbb{R}^{2N}} \frac{|v_k(x)-v_k(y)|^p}{|x|^{\theta_1}|x-y|^{N+sp}|y|^{\theta_2}} \dd x \dd y \\
    & \quad- \gamma_1 \int_{\mathbb{R}^N} \frac{|u_k|^{p}}{|\Bar{x}|^{sp+\theta}}\dd x - \gamma_2 \int_{\mathbb{R}^N} \frac{|v_k|^{p}}{|\Bar{x}|^{sp+\theta}}\dd \Bar{x} \\
    & = \iint_{\mathbb{R}^{2N}} \lambda_{k,1}^{N-sp-\theta}\frac{|u_k(\lambda_{k}x)-u_k(\lambda_{k}y)|^p}{|x|^{\theta_1}|x-y|^{N+sp}|y|^{\theta_2}} \dd x \dd y \\ 
    & \quad +\iint_{\mathbb{R}^{2N}} \lambda_{k}^{N-sp-\theta}\frac{|v_k(\lambda_{k}x)-v_k(\lambda_{k}y)|^p}{|x|^{\theta_1}|x-y|^{N+sp}|y|^{\theta_2}} \dd x \dd y  \\
    & \quad- \gamma_1 \int_{\mathbb{R}^N} \lambda_{k}^{N-sp-\theta}\frac{|u_k|^{p}}{|x|^{sp+\theta}}\dd x - \gamma_2 \int_{\mathbb{R}^N}\lambda_{k,2}^{N-sp-\theta} \frac{|v_k|^{p}}{|x|^{sp+\theta}}\dd x \\ 
    & = \iint_{\mathbb{R}^{2N}} \frac{|u_k(\Bar{x})-u_k(\Bar{y})|^p}{|\Bar{x}|^{\theta_1}|\Bar{x}-\Bar{y}|^{N+sp}|\Bar{y}|^{\theta_2}} \dd \Bar{x} \dd \Bar{y} + \iint_{\mathbb{R}^{2N}} \frac{|v_k(\Bar{x})-v_k(\Bar{y})|^p}{|\Bar{x}|^{\theta_1}|\Bar{x}-\Bar{y}|^{N+sp}|\Bar{y}|^{\theta_2}} \dd \Bar{x} \dd \Bar{y} \\
    & \quad- \gamma_1 \int_{\mathbb{R}^N} \frac{|u_k|^{p}}{|\Bar{x}|^{sp+\theta}}\dd x - \gamma_2 \int_{\mathbb{R}^N} \frac{|v_k|^{p}}{|\Bar{x}|^{sp+\theta}}\dd \Bar{x} \\
       & = \|(u_k,v_k)\|^p.
\end{align*}
And since $\|(u_k,v_k)\|^p\to S^{\sharp}$ as $k\to+\infty$, we deduce that
$\|(\Tilde{u}_k,\Tilde{v}_k)\|^p\to S^{\sharp}$ as $k\to+\infty$.

In this way, the sequence
$\{(\Tilde{u}_k,\Tilde{v}_k)\}_{k\in\mathbb{N}} \subset W$ is also a minimizing sequence for $S^{\sharp}$ such that
we have
\begin{align}
    \label{4.1achineses:sistemas}
    Q^{\sharp}(\Tilde{u}_k,\Tilde{v}_k) = 1, \qquad \|(\Tilde{u}_k,\Tilde{v}_k)\|^p \to S^{\sharp}.
\end{align}

Consider $\{\Tilde{x}_k\}=\{\frac{x_k}{\lambda_{k}}\}$, from inequality~\eqref{des:4.1chineses:sistemas} together with Hölder's inequality 
\begin{align*}
    0< C &\leqslant \int_{B_1(\Tilde{x}_k)} \frac{|\Tilde{u}_k(x)|^p}{|x|^{pr}} \dd x \\
   & \leqslant \left(\int_{B_1(\Tilde{x}_k)} 1 \dd x\right)^{1-\frac{p}{p^*_s(\alpha,\theta)}} \left(\int_{B_1(\Tilde{x}_k)} \left(\frac{|\Tilde{u}_k(x)|^p}{|x|^{pr}}\right)^{\frac{p^*_s(\alpha,\theta)}{p}}\dd x \right)^{\frac{p}{p^*_s(\alpha,\theta)}} \\ 
    & \leqslant C \left(\int_{B_1(\Tilde{x}_k)} \frac{|\Tilde{u}_k(x)|^{p
    ^*_s(\alpha,\theta)}}{|x|^{\alpha}}\dd x\right)^{\frac{p}{p^*_s(\alpha,\theta)}}.
\end{align*}
Therefore, 
\begin{align}
    \label{des:4.2chineses:sistemas}
    \left(\int_{B_1(\Tilde{x}_k)} \frac{|\Tilde{u}_k(x)|^{p
    ^*_s(\alpha, \theta)}}{|x|^{\alpha}}\dd x\right)^{\frac{p}{p^*_s(\alpha, \theta)}} \geqslant C>0.
\end{align}

We claim that the sequence $\{\Tilde{x}_k\}\subset \mathbb{R}^{N}$ of the centers of the balls is bounded. We argue by contradiction and suppose that $|\Tilde{x}_k| \to + \infty$ as $k \to +\infty$; then for any $x \in B_1(\Tilde{x}_k)$,  we have $ |x| \geqslant |\Tilde{x}_k|-1$ for $k \in \mathbb{N}$ large enough. By Hölder's inequality, we obtain
\begin{align*}
    \int_{B_1(\Tilde{x}_k)} \frac{|\Tilde{u}_k(x)|^{p^*_s(\alpha, \theta)}}{|x|^{\alpha}} \dd x & \leqslant  \frac{1}{(|\Tilde{x}_k|-1)^{\alpha}}\int_{B_1(\Tilde{x}_k)}|\Tilde{u}_k(x)|^{p^*_s(\alpha, \theta)}\dd x \\
     \qquad & \leqslant \frac{C}{(|\Tilde{x}_k|-1)^{\alpha}}\left(\int_{B_1(\Tilde{x}_k)} |\Tilde{u}_k(x)|^{p^*_s(0,\theta)}\dd x\right)^{\frac{p^*_s(\alpha,\theta)}{p^*_s(0,\theta)}}\\
     \qquad & \leqslant  \frac{C}{(|\Tilde{x}_k|-1)^{\alpha}}\|\Tilde{u}_k(x)\|_{L^{p^*_s(0,\theta)}}^{p^*_s(\alpha,\theta)}\\
     & \leqslant  \frac{C}{(|\Tilde{x}_k|-1)^{\alpha}}\|u_k(x)\|_{\Dot{W}^{s,p}_{\theta}(\mathbb{R}^N)}^{p^*_s(\alpha, \theta)} \\
      & \leqslant \frac{C}{(|\Tilde{x}_k|-1)^{\alpha}} \to 0 \qquad(k\to+\infty)
\end{align*}
where we used the boundedness of the minimizing sequence $\{\Tilde{u}_k\}_{k\in\mathbb{N}}\subset \Dot{W}^{s,p}_{\theta}(\mathbb{R}^N) $. 
This is a contradiction with inequality~\eqref{des:4.2chineses:sistemas} and this implies that the sequence $\{\Tilde{x}_k\}\subset \mathbb{R}^{N}$ is bounded. 

From inequality~\eqref{des:4.1chineses:sistemas} and the boundedness of
the sequence $\{\Tilde{x}_k\}\subset \mathbb{R}^{N}$ of the centers of the balls, we may find $R>0$ such that $B_{R}(0)$ contains all balls of center $\Tilde{x}_k$ and radius $1$; moreover, with
\begin{align}
    \label{des:4.3chin}
    \int_{B_R(0)} \frac{|\Tilde{u}_k(x)|^p}{|x|^{pr}}\dd x \geqslant C_1>0.
\end{align}

Similar computations can also be done with respect to the sequence of functions $\{\tilde{v}_{k}\}_{k\in\mathbb{N}} \subset \Dot{W}^{s,p}_{\theta}(\mathbb{R}^N)$. Therefore, we obtain $\|(\Tilde{u}_k,\Tilde{v}_k)\|=\|(u_k,v_k)\|\leqslant C$ for $k \in \mathbb{N}$ large enough and there exists a  function pair $(\Tilde{u},\Tilde{v}) \in W$ such that 
\begin{align}
    \label{4.4chineses:sistemas}
    (\Tilde{u}_k,\Tilde{v}_k) \rightharpoonup (\Tilde{u},\Tilde{v})& \quad \textup{ weakly in $\Dot{W}^{s,p}_{\theta} (\mathbb{R}^N) \times \Dot{W}^{s,p}_{\theta} (\mathbb{R}^N)$}\\
    (\Tilde{u}_k,\Tilde{v}_k) \to (\Tilde{u},\Tilde{v}) & \quad \textup{a.e. on $\mathbb{R}^N \times \mathbb{R}^N$},
\end{align}
as $k \to +\infty$, up to subsequences. According to Lemma~\ref{lemma:2.3chineses}, we have 
\begin{align*}
     \Bigl(\frac{\Tilde{u}_k}{|x|^{r}},\frac{\Tilde{v}_k}{|x|^{r}}\Bigr) \to \Bigl(\frac{\Tilde{u}}{|x|^{r}},\frac{\Tilde{v}}{|x|^{r}}\Bigr) \; \textup{in $L^p_{\loc}(\mathbb{R}^N)\times L^p_{\loc}(\mathbb{R}^N)$};
\end{align*}
hence, 
\begin{align*}
    \int_{B_R(0)} \frac{|\Tilde{u}(x)|^p}{|x|^{pr}}\dd x \geqslant C_1>0,
\end{align*}
and we deduce that $\Tilde{u}\not\equiv 0$. Similarly, we can get $\Tilde{v}\not\equiv 0$.

We can verify in the same way as we did in~\cite[Lemma $2.6$, p.~$16$]{assunccao2023fractional} that
\begin{align}
\label{limiteqsus3}
    1 = Q^{\sharp}(\Tilde{u}_k,\Tilde{v}_k) = Q^{\sharp}(\Tilde{u}_k-\Tilde{u},\Tilde{v}_k-\Tilde{v})+ Q^{\sharp}(\Tilde{u},\Tilde{v})+ o(1).
\end{align}

By a Br\'{e}zis-Lieb's lemma, we obtain 
\begin{align}
\label{BL1}
    \begin{split}
    &\int_{\mathbb{R}^N} \Bigl(I_{\mu} \ast \frac{\left|\Tilde{u}_k - \Tilde{u}\right|}{|y|^{\delta}}^{p^{\sharp}_s}\Bigr) \frac{\left|\Tilde{u}_k - \Tilde{u}\right|}{|x|^{\delta}}^{p^{\sharp}_s}\dd x + \int_{\mathbb{R}^N} \Bigl(I_{\mu} \ast \frac{\left|\Tilde{u}\right|}{|y|^{\delta}}^{p^{\sharp}_s}\Bigr) \frac{\left|\Tilde{u}\right|}{|x|^{\delta}}^{p^{\sharp}_s} \dd x \\ 
     &\quad = \int_{\mathbb{R}^N} \Bigl(I_{\mu} \ast \frac{\left|\Tilde{u}_k\right|}{|y|^{\delta}}^{p^{\sharp}_s}\Bigr) \frac{\left|\Tilde{u}_k\right|}{|x|^{\delta}}^{p^{\sharp}_s} \dd x + o(1),
     \end{split}
\end{align}
and
\begin{align}
\label{BL2}
    \begin{split}
    & \int_{\mathbb{R}^N} \Bigl(I_{\mu} \ast \frac{\left|\Tilde{v}_k - \Tilde{v}\right|}{|y|^{\delta}}^{p^{\sharp}_s}\Bigr) \frac{\left|\Tilde{v}_k - \Tilde{v}\right|}{|x|^{\delta}}^{p^{\sharp}_s}\dd x + \int_{\mathbb{R}^N} \Bigl(I_{\mu} \ast \frac{\left|\Tilde{v}\right|}{|y|^{\delta}}^{p^{\sharp}_s}\Bigr) \frac{\left|\Tilde{v}\right|}{|x|^{\delta}}^{p^{\sharp}_s} \dd x \\
    & \quad = \int_{\mathbb{R}^N} \Bigl(I_{\mu} \ast \frac{\left|\Tilde{v}_k\right|}{|y|^{\delta}}^{p^{\sharp}_s}\Bigr) \frac{\left|\Tilde{v}_k\right|}{|x|^{\delta}}^{p^{\sharp}_s} \dd x + o(1),
    \end{split}
\end{align}

Therefore, by definition~\eqref{Ssharp:sistemas}, by weak convergence $(\Tilde{u}_k,\Tilde{v}_k) \rightharpoonup (\Tilde{u},\Tilde{v})$ in $W$ together with the Brézis–Lieb lemma and by
the estimate~\eqref{limiteqsus3}, we have
\begin{align*}
    S^{\sharp} & = \lim\limits_{k \to \infty}\|(\Tilde{u}_k,\Tilde{v}_k)\|^p \\
    & = \|(\Tilde{u},\Tilde{v})\|^p +\lim\limits_{k \to \infty}\|(\Tilde{u}_k-\Tilde{u},\Tilde{v}_k-\Tilde{v})\|^p\\
    & \geqslant  S^{\sharp} (Q^{\sharp}(\Tilde{u},\Tilde{v}))^{\frac{p}{p^{\sharp}_{s}}}+  S^{\sharp}\left(\lim\limits_{k \to \infty}Q^{\sharp}(\Tilde{u}_k-\Tilde{u},\Tilde{v}_k-\Tilde{v})\right)^{\frac{p}{p^{\sharp}_{s}}}\\
    & \geqslant S^{\sharp} \bigr(Q^{\sharp}(\Tilde{u},\Tilde{v})+ \lim\limits_{k \to \infty}Q^{\sharp}(\Tilde{u}_k-\Tilde{u},\Tilde{v}_k-\Tilde{v})\bigl)^{\frac{p}{p^{\sharp}}}\\
    & =  S^{\sharp},
\end{align*}
where in the last but one passage above we used the inequality 
\begin{align}
\label{desigualdad}
    (a+b)^{q} \leqslant a^{q}+b^{q},
\end{align}
valid for all $a, b  \in \mathbb{R}_{+}^{*}$ and $0<q<1$. So we have equality in all passages, that is, 
\begin{align}
    \label{4.4achin}
    Q^{\sharp}(\Tilde{u},\Tilde{v})=1, \qquad \lim\limits_{k\to \infty}Q^{\sharp}(\Tilde{u}_k-\Tilde{u},\Tilde{v}_k-\Tilde{v})=0,
\end{align}
since $\Tilde{u}, \Tilde{v} \not\equiv 0$. It turns out that, since
\begin{align*}
    S^{\sharp} = \|(\Tilde{u},\Tilde{v})\|^p+\lim\limits_{k \to \infty}\|(\Tilde{u}_k-\Tilde{u},\Tilde{v}_k-\Tilde{v})\|^p,
\end{align*}
then
\begin{align*}
    S^{\sharp} =\|(\Tilde{u},\Tilde{v})\|^p \qquad \textup{and} \qquad \lim\limits_{k \to \infty}\|(\Tilde{u}_k-\Tilde{u},\Tilde{v}_k-\Tilde{v})\|^p=0.
\end{align*}

Finally, by inequality
\begin{align*}
&\iint_{\mathbb{R}^{2N}} 
\dfrac{||\Tilde{u}(x)|-|\Tilde{u}(y)||^{p}}{|x|^{\theta _1}|x-y|^{N+sp}|y|^{\theta _2}}
\dd x \dd y +  \iint_{\mathbb{R}^{2N}} 
\dfrac{||\Tilde{v}(x)|-|\Tilde{v}(y)||^{p}}{|x|^{\theta _1}|x-y|^{N+sp}|y|^{\theta _2}}
\dd x \dd y \\
& \qquad \leqslant \iint_{\mathbb{R}^{2N}} 
\dfrac{|\Tilde{u}(x)-\Tilde{u}(y)|^{p}}{|x|^{\theta _1}|x-y|^{N+sp}|y|^{\theta _2}}
\dd x \dd y + \iint_{\mathbb{R}^{2N}} 
\dfrac{|\Tilde{v}(x)-\Tilde{v}(y)|^{p}}{|x|^{\theta _1}|x-y|^{N+sp}|y|^{\theta _2}}
\dd x \dd y.
\end{align*} 
we deduce that $(|\Tilde{u}|, |\Tilde{v}|) \in W$ is also a minimizer for $S^{\sharp}$; so we can assume that $\Tilde{u}\geqslant 0, \Tilde{v}\geqslant 0$. Thus, $S^{\sharp}$ is achieved by a non-negative function in the case $0<\alpha<sp+\theta$ and $\gamma < \gamma _H$.

\bigskip

\item For $0< \beta <sp+\theta<N$ and $\gamma < \gamma _H$, let $\{(u_k,v_k)\}_{k\in\mathbb{N}}\subset W$ be a minimizing sequence for $S^*$ such that
        \begin{align}
            Q^{*}(u_k, v_k) = 1, \qquad \|(u_k,v_k)\|^p \to S^*
        \end{align}
as $k \to +\infty$.
         
Now we claim that $S^*$ is invariant under the previously defined dilation.
Let 
\begin{align*}
\Tilde{u}_k (x) 
&=\lambda _{k,1} ^{\frac{N-sp-\theta}{p}}u_k(\lambda _{k,1} x),
&\qquad
\Tilde{v}_k (x) 
&=\lambda _{k,2} ^{\frac{N-sp-\theta}{p}}v_k(\lambda _{k,2} x)
\end{align*}
and $\Tilde{x}_k =\frac{x_k}{\lambda _{k,i}}$ for $i \in \{1,2\}$ as in the previous case. 
In this way, the sequence $\{(\Tilde{u}_k, \Tilde{v}_k)\}_{k\in\mathbb{N}} \subset W$ is also a minimizing sequence for $S^*$ such that
we have
\begin{align*}
      Q^{*}(\Tilde{u}_k, \Tilde{v}_k) = 1, \qquad \|(\Tilde{u}_k,\Tilde{v}_k)\|^p \to S^*
\end{align*}

We have already shown that
$\| (\Tilde{u}_{k}, \Tilde{v}_k) \| = \| (u_{k}, v_k) \|$ for every $k \in \mathbb{N}$.
Hence,  $\|(\Tilde{u}_k, \Tilde{v}_k)\|^p\to S^*$.

We claim that the sequence $\{\Tilde{x}_k\} \subset \mathbb{R}^{N}$ is bounded and the proof follows the same steps already presented. 
 From this boundedness and inequality~\eqref{des:4.1chineses:sistemas}, we may find $R>0$ such that $B_{R}(0)$ contains all the unitary balls $B_{1}(\Tilde{x}_{k})$ centered in $\Tilde{x}_k$ and
\begin{align}
    \label{des:4.3achin}
    \int_{B_R(0)} \frac{|v_k(x)|^p}{|x|^{pr}}\dd x \geqslant C_1>0.
\end{align}
Since $\|v_k\|=\|u_k\|\leqslant C$, there exists a $v \in \Dot{W}^{s,p}_{\theta}(\mathbb{R}^N)$ such that 
\begin{align}
    \label{4.4bchin}
    v_k \rightharpoonup v\quad \textup{in $\Dot{W}^{s,p}_{\theta}(\mathbb{R}^N)$},\qquad v_k \to v \; \textup{a.e. \quad on $\mathbb{R}^N$},
\end{align}
as $k \to +\infty$, up to subsequences. According to Lemma~\ref{lemma:2.3chineses}, we have 
\begin{align*}
    \frac{v_k}{|x|^r} \to \frac{v}{|x|^r} \qquad \textup{in $L^p_{\loc}(\mathbb{R}^N)$},
\end{align*}
as $k \to +\infty$, where $r=\frac{\beta}{p^*_s}$. Therefore, 
\begin{align*}
    \int_{B_R(0)} \frac{|v(x)|^p}{|x|^{pr}}\dd x \geqslant C_1>0,
\end{align*}
and we deduce that $v\not\equiv 0$. 

We may verify by a variant of Brézis–Lieb Lemma~\cite[Lemma $2.2$, p. $13$]{assunccao2023fractional} that, if $q=p^*_s(\beta,\theta)$ and $\delta = \beta$, then
\begin{align*}
    1 = \int_{\mathbb{R}^N} \frac{|v_k|^{p^*_s}}{|x|^{\beta}}\dd x = \int_{\mathbb{R}^N} \frac{|v_k-v|^{p^*_s}}{|x|^{\beta}}\dd x +\int_{\mathbb{R}^N} \frac{|v|^{p^*_s}}{|x|^{\beta}}\dd x+ o(1).
\end{align*}
By definition~\eqref{Sast:sistemas} and by weak convergence $(\Tilde{u}_k,\Tilde{v}_k) \rightharpoonup (\Tilde{u},\Tilde{v})$ in $W$, we deduce that
\begin{align*}
    S^* & = \lim\limits_{k \to \infty}\|(\Tilde{u}_k,\Tilde{v}_k)\|^p \\
    & = \|(\Tilde{u},\Tilde{v})\|^p +\lim\limits_{k \to \infty}\|(\Tilde{u}_k-\Tilde{u},\Tilde{v}_k-\Tilde{v})\|^p\\
    & \geqslant  S^*(Q^*(\Tilde{u},\Tilde{v}))^{\frac{p}{p^*_s}} +  S^*\left(\lim\limits_{k \to \infty}Q^*(\Tilde{u}_k-\Tilde{u},\Tilde{v}_k-\Tilde{v})\right)^{\frac{p}{p^*_s}}\\
    & \geqslant S^* \bigr(Q^*(\Tilde{u},\Tilde{v})+ \lim\limits_{k \to \infty}Q^*(\Tilde{u}_k-\Tilde{u},\Tilde{v}_k-\Tilde{v})\bigl)^{\frac{p}{p^*_s}}\\
    & =  S^*
\end{align*}
where we used the inequality~\eqref{desigualdad}.
So we have equality in all
passages, that is, 
\begin{align}
    \label{4.4achineses1}
    Q^*(\Tilde{u},\Tilde{v})=1, \qquad \lim\limits_{k\to \infty}Q^*(\Tilde{u}_k-\Tilde{u},\Tilde{v}_k-\Tilde{v})=0,
\end{align}
since $\Tilde{u}, \Tilde{v} \not\equiv 0$. It turns out that, since
$    S^* = \|(\Tilde{u},\Tilde{v})\|^p+\lim\limits_{k \to \infty}\|(\Tilde{u}_k-\Tilde{u},\Tilde{v}_k-\Tilde{v})\|^p$,
then
$    S^* =\|(\Tilde{u},\Tilde{v})\|^p$ and $\lim\limits_{k \to \infty}\|(\Tilde{u}_k-\Tilde{u},\Tilde{v}_k-\Tilde{v})\|^p=0.$
 
         As in the previous case, we deduce that $|(\Tilde{u},\Tilde{v})| \in W$ is also a minimizer for $S^*$ is achieved by a non-negative function in the case $0<\beta<sp+\theta$ and $\gamma < \gamma _H$.

\end{enumerate}

\end{proof}

\section{Existence of Palais-Smale sequence}

We shall now use the minimizers of $S^{\sharp}$ and $S^*$ obtained in Proposition~\ref{minimizadores:sistemas} to prove the existence of a nontrivial weak solution for equation~\eqref{prob:sistemas}. Recall the definition~\eqref{funcionaldeenergia} of the energy functional associated to roblem~\eqref{prob:sistemas}. The fractional Sobolev and fractional Hardy-Sobolev inequalities imply that $I \in C^1(W, \mathbb{R})$ and that
\begin{align*}
    \lefteqn{\langle I'(u,v),(\phi_1,\phi_2)\rangle} \\
    & =\iint_{\mathbb{R}^{2N}}  \frac{|u(x)-u(y)|^{p-2}(u(x)-u(y))(\phi_1 (x)-\phi_1 (y))}{|x|^{\theta _1}|x-y|^{N+sp}|y|^{\theta _2}}\dd x \dd y \\
    & \quad +\iint_{\mathbb{R}^{2N}}  \frac{|v(x)-v(y)|^{p-2}(v(x)-v(y))(\phi_2 (x)-\phi_2 (y))}{|x|^{\theta _1}|x-y|^{N+sp}|y|^{\theta _2}}\dd x \dd y\\
   & \quad - \gamma_1 \int_{\mathbb{R}^N} \frac{|u|^{p-2}u\phi_1}{|x|^{sp+\theta}} \dd x - \gamma_2 \int_{\mathbb{R}^N} \frac{|v|^{p-2}v\phi_2}{|x|^{sp+\theta}} \dd x
   \\ & \quad -\iint_{\mathbb{R}^{2N}}\frac{|u(x)|^{p^\sharp_s-2}|u(y)|^{p^\sharp_s}u(x)\phi_1(x)}{|x|^{\delta }|x-y|^{\mu}|y|^{\delta }}\dd x\dd y -\iint_{\mathbb{R}^{2N}}\frac{|v(x)|^{p^\sharp_s-2}|v(y)|^{p^\sharp_s}v(x)\phi_2(x)}{|x|^{\delta }|x-y|^{\mu}|y|^{\delta }}\dd x\dd y \\
   & \quad -\int_{\mathbb{R}^N} \frac{|u|^{p^*_s-2}u(x)\phi_1 }{|x|^{\beta}}\dd x -\int_{\mathbb{R}^N} \frac{|v|^{p^*_s{}-2}v(x)\phi_2 }{|x|^{\beta}}\dd x\\
    & \quad + \int_{\mathbb{R}^N}\frac{\eta a |u|^{a-2}u\phi_1|v|^b}{|x|^{\beta}}\dd x + \int_{\mathbb{R}^N}\frac{\eta b |u|^{a}|v|^{b-2}v\phi_2}{|x|^{\beta}}\dd x.
\end{align*}
Note that a nontrivial critical point of $I$ is a nontrivial weak solution to equation~\eqref{prob:sistemas}.

Recall that a Palais-Smale sequence for the energy functional $I$ at the level $c \in \mathbb{R}$ is a sequence 
$\{(u_{k},v_k)\}_{k\in\mathbb{N}} \subset W$ such that 
    \begin{align}
        \label{lim:5.2chin}
        \lim\limits_{k \to +\infty} I(u_k,v_k) = c \quad \textup{and} \quad \lim\limits_{k \to +\infty} I'(u_k,v_k)=0 \quad \textup{strongly in $W'$}.
    \end{align}
This sequence is referred to as a $(PS)_{c}$ sequence.

Now we state a result that ensures the existence of a Palais-Smale sequence for the energy functional.
\begin{proposition}
    \label{prop:5.2chineses:sistemas}
    Let $ s \in (0,1), 0<\alpha,\beta<sp+\theta<N, \mu \in(0,N)$ and $\gamma < \gamma _H$. Consider the functional $I \colon W \to \mathbb{R}$ defined in~\eqref{funcionaldeenergia} on the Banach space $W$. Then there exists a $(PS)_{c}$ sequence $\{(u_k,v_k)\}\subset W$  for $I$ at some level $c \in (0, c^*)$, 
where
\begin{align}
    \label{def:cestrela:sistemas}
    c^* 
    & \coloneqq \min \biggl\{
\Bigl(\frac{1}{p}-\frac{1}{2p^{\sharp}_{s}(\delta,\theta,\mu)}\Bigr)
    S^{\sharp\frac{2p^{\sharp}_{s}}{2p^{\sharp}_{s}(\delta,\theta,\mu)-p}}, \Bigl(\frac{1}{p}-\frac{1}{p^{\ast}_{s}(\beta,\theta)}\Bigr)
    S^{*\frac{p^{\ast}_{s}(\beta,\theta)}{p^{\ast}_{s}(\beta,\theta)-p}}\biggr\}.
\end{align}

\end{proposition}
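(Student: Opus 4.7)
The strategy is to verify that $I$ enjoys the mountain pass geometry, apply the mountain pass theorem (without the $(PS)$ condition) to obtain a Palais--Smale sequence at the minimax level, and then bound this level strictly below $c^{*}$ by probing along rays through the extremals of $S^{\sharp}$ and $S^{*}$ furnished by Proposition~\ref{minimizadores:sistemas}.

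First I would verify the geometry. Clearly $I(0,0)=0$. Combining the Caffarelli--Kohn--Nirenberg inequality~\eqref{des:1.10chineses:bis}, the Hardy--Littlewood--Sobolev--type bound~\eqref{limitacaopsharp} on the Choquard terms, and the definition~\eqref{Sast:sistemas} of $S^{*}$, one obtains
\begin{align*}
I(u,v)\geqslant \frac{1}{p}\|(u,v)\|_{W}^{p}-C_{1}\|(u,v)\|_{W}^{2p_{s}^{\sharp}}-C_{2}\|(u,v)\|_{W}^{p_{s}^{*}},
\end{align*}
so that since $p<\min\{p_{s}^{*},2p_{s}^{\sharp}\}$ there exist $\rho,\alpha>0$ with $I\geqslant\alpha$ on $\{\|(u,v)\|_{W}=\rho\}$. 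For any nontrivial $(\phi_{1},\phi_{2})\in W$, the expansion
\begin{align*}
I(t\phi_{1},t\phi_{2})=\frac{t^{p}}{p}\|(\phi_{1},\phi_{2})\|_{W}^{p}-\frac{t^{2p_{s}^{\sharp}}}{2p_{s}^{\sharp}}Q^{\sharp}(\phi_{1},\phi_{2})-\frac{t^{p_{s}^{*}}}{p_{s}^{*}}Q^{*}(\phi_{1},\phi_{2})\longrightarrow -\infty
\end{align*}
as $t\to+\infty$ produces points where $I<0$. The Ambrosetti--Rabinowitz mountain pass theorem (equivalently, Ekeland's variational principle applied to the minimax class) then yields a sequence $\{(u_{k},v_{k})\}\subset W$ with $I(u_{k},v_{k})\to c$ and $I'(u_{k},v_{k})\to 0$ in $W'$, where
\begin{align*}
c=\inf_{\gamma\in\Gamma}\max_{t\in[0,1]}I(\gamma(t))\geqslant\alpha>0.
\end{align*}

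The heart of the proof is the strict upper bound $c<c^{*}$. Take a minimizer $(U_{\sharp},V_{\sharp})\in W$ of $S^{\sharp}$, normalized so that $Q^{\sharp}(U_{\sharp},V_{\sharp})=1$ and $\|(U_{\sharp},V_{\sharp})\|_{W}^{p}=S^{\sharp}$. Nonnegativity and nontriviality of the extremal imply $Q^{*}(U_{\sharp},V_{\sharp})>0$, so this strictly positive term may be dropped to yield
\begin{align*}
\max_{t\geqslant 0}I(tU_{\sharp},tV_{\sharp})<\max_{t\geqslant 0}\Bigl[\frac{t^{p}}{p}S^{\sharp}-\frac{t^{2p_{s}^{\sharp}}}{2p_{s}^{\sharp}}\Bigr]=\Bigl(\frac{1}{p}-\frac{1}{2p_{s}^{\sharp}}\Bigr)(S^{\sharp})^{\frac{2p_{s}^{\sharp}}{2p_{s}^{\sharp}-p}}.
\end{align*}
The symmetric argument applied to a minimizer $(U_{*},V_{*})$ of $S^{*}$ gives
\begin{align*}
\max_{t\geqslant 0}I(tU_{*},tV_{*})<\Bigl(\frac{1}{p}-\frac{1}{p_{s}^{*}}\Bigr)(S^{*})^{\frac{p_{s}^{*}}{p_{s}^{*}-p}}.
\end{align*}
Restricting each ray to $[0,T]$ with $T$ large enough for $I$ to become negative at the endpoint produces a competitor in $\Gamma$; hence $c$ is strictly bounded by each of the two quantities in~\eqref{def:cestrela:sistemas}, and taking the minimum yields $c<c^{*}$.

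The principal obstacle is the genuinely doubly critical structure: neither the fractional Sobolev constant nor the Stein--Weiss constant alone dominates the functional, so one must bring both extremals into play. The strict inequality $c<c^{*}$ hinges on the observation that along the ray through a minimizer of one of the two best constants, the contribution of the \emph{other} critical nonlinearity is strictly positive, so that this term may be strictly discarded in the auxiliary maximization. This strict gap is precisely what will be needed later to recover compactness of the $(PS)_{c}$ sequence via a concentration--compactness analysis, ruling out energy concentration at either critical scale.
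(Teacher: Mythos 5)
Your proposal is correct and follows essentially the same route as the paper: verify the mountain-pass geometry via the Hardy--Sobolev and Choquard bounds, apply Ambrosetti--Rabinowitz to obtain a $(PS)_c$ sequence, and then bound the minimax level strictly below $c^*$ by testing along the rays through the normalized extremals of $S^\sharp$ and $S^*$, dropping the remaining (strictly positive) critical term. The only cosmetic difference is that the paper phrases the strict inequality as a contradiction with the uniqueness of the maximizer of the auxiliary function $f_1$ (resp.\ $g_1$) and makes the choice of the endpoint $e$ explicit by a case distinction on which of the two thresholds is smaller, whereas you observe directly that discarding a strictly positive subtracted term yields a strict bound.
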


To prove Proposition~\ref{prop:5.2chineses:sistemas} we need the following version of the mountain pass theorem by Ambrosetti \& Rabinowitz~\cite{ambrosetti}. 

\begin{lemma}(Montain Pass Lemma) 
\label{passodamontanha:sistemas}
Let $(W, \| \cdot \|)$ be a Banach space and let $I \in C^1(W,\mathbb{R})$ a functional such that the following conditions are satisfied:
\begin{itemize}[wide]
    \item [$(1)$] $I(0,0)=0$;
    \item [$(2)$] There exist $\rho >0$ and $ r>0$ such that $I(u,v)\geqslant \rho$ for all $u,v \in \Dot{W}^{s,p}_{\theta}(\mathbb{R}^N)$ with $\|(u,v)\|=r;$
    \item [$(3)$] There exist $e \in W$ with $\|e\|>r$ such that $I(e)<0$; define
    \begin{align*}
        c \coloneqq \inf\limits_{g \in \Gamma} \sup\limits_{t \in [0,1]}I(g(t)),
    \end{align*}
    where
    \begin{align*}
        \Gamma \coloneqq \left\{g \in C^0([0,1],\Dot{W}^{s,p}_{\theta}(\mathbb{R}^N)) \colon g(0)=0, g(
        e)<0\right\}.
    \end{align*}
    \end{itemize}
    Then $c\geqslant \rho >0$, and there exists a $(PS)_{c}$ sequence $\{(u_k,v_k)\}\subset W$ for $I$ at level $c $, i.e.,
    \begin{align*}
        \lim\limits_{k \to +\infty} I(u_k,v_k) = c \quad \textup{and} \quad \lim\limits_{k \to +\infty} I'(u_k,v_k)=0 \quad \textup{strongly in $W'$}.
    \end{align*}
\end{lemma}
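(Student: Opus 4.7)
The plan decomposes naturally into establishing the lower bound $c\geqslant\rho>0$ and then extracting a Palais--Smale sequence at that level; the first is elementary, while the second follows the classical Ambrosetti--Rabinowitz strategy via a pseudogradient flow and an indirect (contradiction) argument.

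\textbf{Lower bound.} Fix any $g\in\Gamma$. Since $g(0)=0$ and $\|g(1)\|=\|e\|>r$, continuity of the real-valued map $\tau\mapsto\|g(\tau)\|$ together with the intermediate value theorem furnishes some $\tau_{0}\in(0,1)$ with $\|g(\tau_{0})\|=r$. Hypothesis~(2) then forces $I(g(\tau_{0}))\geqslant\rho$, so $\sup_{\tau\in[0,1]}I(g(\tau))\geqslant\rho$, and infimizing over $g\in\Gamma$ gives $c\geqslant\rho>0$.

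\textbf{Extraction of the PS sequence.} I would argue by contradiction. Suppose no $(PS)_{c}$ sequence exists: then there are $\epsilon_{0}>0$ and $\delta>0$ such that $\|I'(z)\|_{W'}\geqslant\delta$ for every $z\in W$ with $|I(z)-c|\leqslant 2\epsilon_{0}$. Since $c>0$ and $I(e)<0$, I may choose $\epsilon\in(0,\epsilon_{0})$ so small that $c-2\epsilon>\max\{0,I(e)\}$. On the strip $\{|I-c|<2\epsilon\}$ I would construct a locally Lipschitz pseudogradient vector field $V$ for $I$ with $\|V(z)\|\leqslant 1$ and $\langle I'(z),V(z)\rangle\geqslant\delta/2$ (using that $W$ is separable and reflexive, as noted after the definition of $\dot W^{s,p}_\theta(\mathbb{R}^N)$), extend it to all of $W$ by a suitable cutoff, and integrate to obtain a continuous flow $\eta:[0,1]\times W\to W$ that is the identity outside the deformation strip and that sends the sublevel set $\{I\leqslant c+\epsilon\}$ into $\{I\leqslant c-\epsilon\}$.

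\textbf{Contradiction and conclusion.} By the definition of $c$, pick $g\in\Gamma$ with $\sup_{\tau}I(g(\tau))<c+\epsilon$ and set $\tilde g(\tau):=\eta(1,g(\tau))$. Because both $I(0)=0$ and $I(e)<0$ lie outside $[c-2\epsilon,c+2\epsilon]$, the endpoints are fixed by the flow, so $\tilde g\in\Gamma$; but $\sup_{\tau}I(\tilde g(\tau))\leqslant c-\epsilon$, contradicting $c=\inf_{\Gamma}\sup_{\tau}I\circ g$. Hence a $(PS)_{c}$ sequence must exist, completing the proof.

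The main obstacle is the construction of the pseudogradient vector field and its flow in the Banach (non-Hilbert) setting at the mere regularity $I\in C^{1}$; this is precisely the reason one works with a Lipschitz pseudogradient rather than the true gradient, and why separability and reflexivity of $W$ are invoked. Once that is in hand, the deformation argument and the preservation of the endpoints $0$ and $e$ by a careful choice of $\epsilon$ is routine.
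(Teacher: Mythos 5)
The paper does not prove this lemma at all: it is stated as a black box and attributed to Ambrosetti and Rabinowitz, so there is no ``paper's proof'' to compare against. Your argument is the standard deformation-lemma proof of the mountain pass theorem without the Palais--Smale condition, and it is correct in outline: the intermediate value theorem gives $c\geqslant\rho$; the negation of ``a $(PS)_{c}$ sequence exists'' yields $\delta>0$ and $\epsilon_{0}>0$ with $\|I'\|_{W'}\geqslant\delta$ on the strip $\{|I-c|\leqslant 2\epsilon_{0}\}$; a truncated pseudogradient flow then pushes an almost-optimal path below level $c-\epsilon$ while fixing the endpoints $0$ and $e$ (both of which lie strictly below $c-2\epsilon$), contradicting the definition of $c$.

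Two small remarks. First, your parenthetical appeal to separability and reflexivity is not the right justification for the existence of a locally Lipschitz pseudogradient vector field: that construction works on any Banach space for a $C^{1}$ functional, the key ingredient being paracompactness of metric spaces (to build a locally finite locally Lipschitz partition of unity), not separability or reflexivity. Second, to make the deformation quantitative you also need to take $\epsilon$ small relative to $\delta$ (e.g.\ $\epsilon<\delta/4$ with your normalisation $\|V\|\leqslant 1$, $\langle I',V\rangle\geqslant\delta/2$), so that the unit-time flow actually drops the value by $2\epsilon$; this is standard but should be said. An equally common alternative route, which avoids the vector field entirely, is to derive the $(PS)_{c}$ sequence directly from Ekeland's variational principle applied to the minimax class $\Gamma$; either route proves exactly the statement as given.
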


The proof of Proposition~\ref{prop:5.2chineses:sistemas} follows from the next two lemmas.

\begin{lemma}
    \label{lema1passodamontanha:sistemas}
    The functional $I$ verifies the assumptions of Lemma~\ref{passodamontanha:sistemas}.
\end{lemma}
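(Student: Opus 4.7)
The plan is to verify the three conditions of Lemma~\ref{passodamontanha:sistemas} one at a time. Condition~$(1)$, namely $I(0,0)=0$, is immediate from~\eqref{funcionaldeenergia} by substituting the pair $(0,0)$.

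For condition~$(2)$, the strategy rests on the superlinearity of every nonlinear term in $I$. Using the assumption $\gamma_{1},\gamma_{2}<\gamma_{H}$ together with the equivalence~\eqref{des:2.4chineses:bis}, the norm $\|(u,v)\|_{W}^{p}$ controls the Gagliardo seminorms of $u$ and $v$. From the Rayleigh--Ritz definitions~\eqref{Ssharp:sistemas} and~\eqref{Sast:sistemas} I would extract the bounds
\[
Q^{\sharp}(u,v) \leq (S^{\sharp})^{-2p^{\sharp}_{s}/p}\,\|(u,v)\|^{2p^{\sharp}_{s}}, \qquad Q^{*}(u,v) \leq (S^{*})^{-p^{*}_{s}/p}\,\|(u,v)\|^{p^{*}_{s}},
\]
and substitute them into~\eqref{funcionaldeenergia} to get
\[
I(u,v) \geq \frac{1}{p}\,\|(u,v)\|^{p} - C_{1}\,\|(u,v)\|^{2p^{\sharp}_{s}} - C_{2}\,\|(u,v)\|^{p^{*}_{s}}.
\]
Under the standing restrictions $\beta<sp+\theta$ and $2\delta+\mu<N$ of Theorem~\ref{teo:sistemas} we have $p^{*}_{s}>p$ and $2p^{\sharp}_{s}>p$, so the superlinear contributions are negligible compared with $\|(u,v)\|^{p}$ for small norms. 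Fixing $r>0$ sufficiently small and then setting $\rho>0$ equal to the resulting lower bound yields $I(u,v)\geq \rho>0$ on the sphere $\|(u,v)\|=r$.

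For condition~$(3)$, I would pick a nontrivial pair $(u_{0},v_{0})\in W$ with $Q^{\sharp}(u_{0},v_{0})>0$ and $Q^{*}(u_{0},v_{0})>0$ (for instance any of the minimizers supplied by Proposition~\ref{minimizadores:sistemas}) and examine the one-parameter fiber
\[
I(t(u_{0},v_{0})) = \frac{t^{p}}{p}\,\|(u_{0},v_{0})\|^{p} - \frac{t^{2p^{\sharp}_{s}}}{2p^{\sharp}_{s}}\,Q^{\sharp}(u_{0},v_{0}) - \frac{t^{p^{*}_{s}}}{p^{*}_{s}}\,Q^{*}(u_{0},v_{0}).
\]
Since $2p^{\sharp}_{s}$ and $p^{*}_{s}$ are both strictly greater than $p$, the right-hand side tends to $-\infty$ as $t\to+\infty$; choosing $t_{0}$ large enough that both $\|t_{0}(u_{0},v_{0})\|>r$ and $I(t_{0}(u_{0},v_{0}))<0$ then produces the required mountain pass endpoint $e=t_{0}(u_{0},v_{0})$.

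I do not expect a serious obstacle: this is the standard mountain pass geometry for functionals of superlinear type. The only items requiring some care are verifying once and for all that the weighted critical exponents satisfy $p^{*}_{s}>p$ and $2p^{\sharp}_{s}>p$ (routine algebra using $\beta<sp+\theta$ and $2\delta+\mu<N$), and choosing the constants $C_{1},C_{2},r,\rho$ compatibly; both are bookkeeping. The genuinely delicate analysis will come later, in showing that the minimax level $c$ lies strictly below the threshold $c^{*}$ of~\eqref{def:cestrela:sistemas}.
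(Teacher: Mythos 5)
Your proof is correct and follows essentially the same route as the paper: verify $I(0,0)=0$, establish the lower bound $I(u,v)\geqslant \frac1p\|(u,v)\|^{p}-C_{1}\|(u,v)\|^{2p^{\sharp}_{s}}-C_{2}\|(u,v)\|^{p^{*}_{s}}$ to get the mountain-pass geometry on a small sphere, and show $I(t(u_{0},v_{0}))\to-\infty$ as $t\to+\infty$ since both nonlinear exponents exceed $p$. The only cosmetic difference is that you extract the superlinear bounds from the Rayleigh--Ritz quotients~\eqref{Ssharp:sistemas} and~\eqref{Sast:sistemas}, whereas the paper invokes~\eqref{limitacaopsharp} directly; these are equivalent once one knows $S^{\sharp},S^{*}>0$, which in turn rests on the same Caffarelli--Kohn--Nirenberg-type inequalities.
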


\begin{proof}
     Clearly, we have $I(0,0)=0$. 
     We now verify the second assumption of Lemma~\ref{passodamontanha:sistemas}.  Recalling the definition~\eqref{qestrela:sistemas} of the quadratic form $Q^{*}$ and using  inequality~\eqref{limitacaopsharp}, for any $ (u,v) \in W$ we obtain
    \begin{align*}
        I(u,v)  
        & \geqslant  \frac{1}{p}\|(u,v)\|_W^p 
        - \frac{C}{2p^\sharp_s} \bigl[\|u\|^{2p^\sharp_s}_{\Dot{W}^{s,p}_{\theta}(\mathbb{R}^N)}
         + \|v\|^{2p^\sharp_s}_{\Dot{W}^{s,p}_{\theta}(\mathbb{R}^N)}\bigr]
        -\frac{1}{p^{*}_{s}}Q^{*}(u,v) \\
         & \geqslant  \frac{1}{p}\|(u,v)\|_W^p 
        - \frac{C}{2p^\sharp_s} \bigl[\|u\|^p_{\Dot{W}^{s,p}_{\theta}(\mathbb{R}^N)}
         + \|v\|^p_{\Dot{W}^{s,p}_{\theta}(\mathbb{R}^N)}\bigr]^{\frac{2p^\sharp_s}{p}}
        -\frac{1}{p^{*}_{s}}Q^{*}(u,v) \\ 
         & \geqslant  \frac{1}{p}\|(u,v)\|_W^p 
        - C_1 \|(u,v)\|^{2p^\sharp_s}
       - C_2\|(u,v)\|^{p^*_s}.
    \end{align*}

    Since $s \in (0,1), 0 < \alpha, \beta <sp +\theta <N$ and $\mu \in (0,N)$, we have that $p^*_s(\beta,\theta)>p$ and $2p^\sharp_s>p^*_s(\alpha,\theta)>p$. Therefore, there exists $r>0$ small enough such that
    \begin{align*}
        \inf\limits_{\|(u,v)\|=r}I(u,v)>\rho,
    \end{align*}
    so item $(2)$ of Lemma~\ref{passodamontanha:sistemas} are satisfied.

For $(u,v) \in W$ and $t \in \mathbb{R}_{+}$, we have 
\begin{align*}
    I(tu,tv)=\frac{t^p}{p}\|(u,v)\|^p- \frac{t^{2p^{\sharp}_{s}}}{2p^{\sharp}_{s}}Q^{\sharp}(u,v)-\frac{t^{p^*_s}}{p^*_s} Q^*(u,v) ;
\end{align*}
since $2p^\sharp_s>p^*_s(\alpha,\theta)>p$, we deduce that
\begin{align*}
    \lim\limits_{t \to + \infty} I(tu,tv)=-\infty \quad \textup{for any $(u,v) \in W$}.
\end{align*}
Consequently, for any fixed $e \in W$, there exists $t_e>0$ such that $\|t_ee\|>r$ and $I(t_ee)<0$. Thus, item $(3)$ of Lemma~\ref{passodamontanha:sistemas} is satisfied.
\end{proof}

From Lemma~\ref{lema1passodamontanha:sistemas} above, we guarante by Lemma~\ref{passodamontanha:sistemas} the existence of a Palais-Smale sequence $\{(u_k,v_k)\} \subset W$ such that
\begin{align*}
        \lim\limits_{k \to +\infty} I(u_k,v_k) = c \quad \textup{and} \quad \lim\limits_{k \to +\infty} I'(u_k,v_k)=0 \quad \textup{strongly in $W'$}.
    \end{align*}
Moreover, by the definition of $c$ we deduce that $c \geqslant \rho > 0 $. Therefore $c>0$ for all function $(u,v) \in W\setminus\{(0,0)\}$.

\begin{lemma}
  \label{lema2passodamontanha:sistemas}
  Suppose that $\mu \in (0,N)$ and that $0< \alpha< sp+ \theta$. Then there exists $(u,v) \in W \setminus \{(0,0)\}$ such that $c \in (0, c^*)$, where $c^*$ is defined in~\eqref{def:cestrela:sistemas}.
\end{lemma}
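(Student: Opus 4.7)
The plan is to construct two explicit admissible paths using the extremal pairs furnished by Proposition~\ref{minimizadores:sistemas}, and to control the mountain pass level along each one separately; one path bounds $c$ strictly below the first quantity in~\eqref{def:cestrela:sistemas}, the other below the second, so that together they force $c<c^{\ast}$.

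Let $(U_{\sharp},V_{\sharp})\in W$ be a minimizer of $S^{\sharp}$, rescaled so that $Q^{\sharp}(U_{\sharp},V_{\sharp})=1$; then $\|(U_{\sharp},V_{\sharp})\|_{W}^{p}=S^{\sharp}$, and since the minimizer is nontrivial in both components one has $\kappa_{1}\coloneqq Q^{\ast}(U_{\sharp},V_{\sharp})>0$. For $t\geqslant 0$,
\begin{align*}
I(tU_{\sharp},tV_{\sharp})
= \frac{S^{\sharp}}{p}\,t^{p}
 - \frac{1}{2p^{\sharp}_{s}}\,t^{2p^{\sharp}_{s}}
 - \frac{\kappa_{1}}{p^{\ast}_{s}}\,t^{p^{\ast}_{s}}.
\end{align*}
Because $2p^{\sharp}_{s}>p^{\ast}_{s}>p$, this goes to $-\infty$ as $t\to+\infty$, so some $T_{1}>0$ exists with $I(T_{1}U_{\sharp},T_{1}V_{\sharp})<0$, producing an admissible path $g_{1}(s)\coloneqq (sT_{1}U_{\sharp},sT_{1}V_{\sharp})\in\Gamma$. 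Discarding the strictly positive Sobolev term and elementarily maximizing in $t$ gives
\begin{align*}
\sup_{t\geqslant 0}I(tU_{\sharp},tV_{\sharp})
 < \sup_{t\geqslant 0}\Bigl[\frac{S^{\sharp}}{p}\,t^{p}-\frac{1}{2p^{\sharp}_{s}}\,t^{2p^{\sharp}_{s}}\Bigr]
 = \Bigl(\frac{1}{p}-\frac{1}{2p^{\sharp}_{s}}\Bigr)(S^{\sharp})^{\frac{2p^{\sharp}_{s}}{2p^{\sharp}_{s}-p}},
\end{align*}
and by the definition of the mountain pass level this bounds $c$ strictly below the first quantity in~\eqref{def:cestrela:sistemas}.

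Running the symmetric argument with a minimizer $(U_{\ast},V_{\ast})$ of $S^{\ast}$ normalized by $Q^{\ast}(U_{\ast},V_{\ast})=1$, so that $\|(U_{\ast},V_{\ast})\|_{W}^{p}=S^{\ast}$ and $\kappa_{2}\coloneqq Q^{\sharp}(U_{\ast},V_{\ast})>0$, one now drops the positive Choquard term instead of the Sobolev one; the elementary maximization of $\tfrac{S^{\ast}}{p}t^{p}-\tfrac{1}{p^{\ast}_{s}}t^{p^{\ast}_{s}}$ yields
\begin{align*}
c\leqslant \sup_{t\geqslant 0}I(tU_{\ast},tV_{\ast})
 < \Bigl(\frac{1}{p}-\frac{1}{p^{\ast}_{s}}\Bigr)(S^{\ast})^{\frac{p^{\ast}_{s}}{p^{\ast}_{s}-p}}.
\end{align*}
The two estimates combined place $c$ strictly below each of the terms whose minimum defines $c^{\ast}$, and Lemma~\ref{lema1passodamontanha:sistemas} gives $c>0$, so $c\in(0,c^{\ast})$.

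The delicate point, and the main obstacle, is the \emph{strict} inequality: it depends entirely on having $\kappa_{1},\kappa_{2}>0$, i.e., on the fact that the extremal pairs from Proposition~\ref{minimizadores:sistemas} are genuinely nontrivial in both components. This was in fact established there (the local weighted $L^{p}$ mass of each component was kept bounded away from zero by the rescaling and the Lévy-type concentration argument), so no further compactness work is needed; the only computational content of the present lemma is the two one-variable maximizations carried out above.
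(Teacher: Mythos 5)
Your approach is essentially the one the paper uses: take the minimizers of $S^{\sharp}$ and $S^{*}$ from Proposition~\ref{minimizadores:sistemas}, run the straight-line path $t \mapsto t(u,v)$ through each, drop the strictly positive unused term (which forces the strict inequality), and maximize the remaining two-term polynomial in $t$. The two one-variable computations and the resulting bounds agree with the paper, and your way of obtaining strictness (observing that $I(t\,\cdot\,) < f_1(t)$ for all $t>0$ because the discarded term is strictly positive, then evaluating at the argmax) is in fact cleaner than the paper's contradiction argument.

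There is, however, an imprecision in the concluding step that you should repair. The mountain pass level $c$ depends on the chosen endpoint $e$ through the class $\Gamma$: your first estimate controls the level $c_1$ associated with the endpoint $T_1(U_{\sharp},V_{\sharp})$, while your second controls a \emph{different} level $c_2$ associated with $T_2(U_{\ast},V_{\ast})$. Writing a single letter $c$ and asserting that "the two estimates combined place $c$ strictly below each of the terms" conflates them: you have shown $c_1 < A$ and $c_2 < B$, not that one level lies below both $A$ and $B$. Since the lemma only asks for \emph{some} $(u,v)$ producing a level in $(0,c^{\ast})$, the fix is a simple case split: if $A \leqslant B$ (so $c^{\ast}=A$) pick the endpoint built from the $S^{\sharp}$-minimizer and use $c_1<A=c^{\ast}$; otherwise pick the $S^{\ast}$-minimizer and use $c_2<B=c^{\ast}$. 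This is precisely what the paper does when it defines $(e_1,e_2)$ by a case distinction on which of the two quantities in $c^{\ast}$ is smaller; you should make that choice explicit rather than leaving it implicit.
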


\begin{proof}

Using Proposition~\ref{minimizadores:sistemas}, we obtain the minimizers $(u_1,v_1) \in W$ for $S^{\sharp}$ and $(u_2,v_2) \in W$ for $S^* $, respectively. Thus, there exist a function $(e_1,e_2)\in W$ defined by
\begin{align*}
    (e_1,e_2) = 
    \begin{cases}
        (u_1,v_1), \quad \textup{if} \; \frac{2p^\sharp_s-p}{2pp^\sharp_s}S_{\mu}(N,s,\gamma,\alpha)^{\frac{2p^{\sharp}_{\mu}(\alpha, \theta)}{2p^{\sharp}_{\mu}(\alpha, \theta)-p}}\leqslant \frac{sp-\beta}{p(N-\beta)}\Lambda (N, s, \gamma,\beta)^{\frac{N-\beta}{sp-\beta}} \\
       (u_2,v_2), \quad \textup{if} \; \dfrac{2p^\sharp_s(\delta,\theta,\mu)-p}{2pp^\sharp_s}S_{\mu}(N,s,\gamma,\alpha)^{\frac{2p^{\sharp}_{\mu}(\alpha, \theta)}{2p^{\sharp}_{\mu}(\alpha, \theta)-p}} > \frac{sp-\beta}{p(N-\beta)}\Lambda (N, s, \gamma,\beta)^{\frac{N-\beta}{sp-\beta}}
    \end{cases}
\end{align*}
such that $\|(e_1,e_2)\|>r$ and $I(e_1,e_2)<0$. We can define
\begin{align*}
    c \coloneqq \inf\limits_{g \in \Gamma}\sup\limits_{t \in [0,1]} I(g(t)),
\end{align*}
where 
\begin{align*}
    \Gamma \coloneqq \left\{g \in C^0([0,1], \Dot{W}^{s,p}_{\theta}(\mathbb{R}^N)) \colon g(0)=0,g(e_1,e_2)<0\right\}.
\end{align*}
Clearly, we have that $c>0$. For the case where $e=(u_1,v_1)$, we can deduce that 
\begin{align*}
    0<c< \frac{2p^\sharp_s-p}{2pp^\sharp_s}S_{\mu}(N,s,\gamma,\alpha)^{\frac{p^\sharp_s}{p^*_{\mu}(\alpha,\theta)-1}}. 
\end{align*}
In fact, for all $ t \geqslant 0$, by the definition of the functional $I$, we have that
\begin{align*}
    I(tu_1,tv_1) \leqslant \frac{t^p}{p}\|(u_1,v_1)\|^p - \frac{t^{2p^\sharp_s}}{2p^\sharp_s} Q^{\sharp}(u_1,v_1) \eqqcolon f_1(t).
\end{align*}
It is easy to see that 
$    f_1'(t) = t^{p-1}[\|(u_1,v_1)\|^p - t^{2p^\sharp_s -p}Q^{\sharp}(u_1,v_1)]$. So, $f'_{1}(\Tilde{t})=0$ for
\begin{align}
\label{maxf1}
    \Tilde{t}=\left(\frac{\|(u_1,v_1)\|^p}{Q^{\sharp}(u_1,v_1)}\right)^{\frac{1}{2p^\sharp_s-p}},
\end{align}
and this is a point of maximum for $f_{1}$. Besides of that, this maximum value is
\begin{align*}
    f_1(\Tilde{t}) 
     & = \left[\frac{1}{p}-\frac{1}{2p^\sharp_s}\right] \frac{\|(u_1,v_1)\|^{\frac{2pp^\sharp_s}{2p^\sharp_s-p}}}{Q^{\sharp}(u_1,v_1)^{\frac{p}{2p^\sharp_s-p}}} 
    = \left[\frac{2p^\sharp_s-p}{2pp^\sharp_s}\right]S^{\sharp \frac{2p^\sharp_s}{2p^\sharp_s-p}}.
\end{align*}
Therefore,
\begin{align}
\label{des:5.3chineses:sistemas}
    \sup\limits_{t \geqslant 0}I(tu_1,tv_1) \leqslant \sup\limits_{t \geqslant 0} f_1(t)  
    = \frac{2p^\sharp_s-p}{2pp^\sharp_s(\delta,\theta,\mu)} S_{\sharp}^{\frac{2p^\sharp_s}{2p^\sharp_s-p}}
\end{align}
The equality does not hold in~\eqref{des:5.3chineses:sistemas}; otherwise, we would have that $ \sup\limits_{t \geqslant 0}I(tu_1,tv_1) = \sup\limits_{t \geqslant 0} f_1(t)$. Let $t_1>0$ be the point where $\sup\limits_{t \geqslant 0}I(tu_1,tv_1)$ is attained. We have
\begin{align*}
    f_1(t_1) -\frac{t_1^{p^*_s(\beta,\theta)}}{p^*_s(\beta,\theta)}Q^*(u_1,v_1) = f_1(\Tilde{t})
\end{align*}
which means that $f_1(t_1)>f_1(\Tilde{t})$, since $t_1>0$. This contradicts the fact that $\Tilde{t}$ is the unique maximum point for $f_1$. Thus, we have strict inequality in~\eqref{des:5.3chineses:sistemas}, that is,
\begin{align}
    \label{des:5.4chin}
     \sup\limits_{t \geqslant 0}I(tu_1,tv_1) < \sup\limits_{t \geqslant 0} f_1(t)  = \frac{2p^{\sharp}_{\mu}(\alpha, \theta)-p}{2pp^{\sharp}_{\mu}(\alpha, \theta)} S^{\sharp \frac{2p^{\sharp}_{\mu}(\alpha, \theta)}{2p^{\sharp}_{\mu}(\alpha, \theta)-p}}.
\end{align}
Therefore, $0<c<\frac{2p^{\sharp}_{\mu}(\alpha, \theta)-p}{2pp^{\sharp}_{\mu}(\alpha, \theta)} S^{\sharp \frac{2p^{\sharp}_{\mu}(\alpha, \theta)}{2p^{\sharp}_{\mu}(\alpha, \theta)-p}}$.

Similarly, for the case of $e=(u_2,v_2)$, we can verify that
\begin{align}
    \label{des:5.5chineses:sistemas}
    \sup\limits_{t\geqslant 0} I(tu_2,tv_2) < \frac{sp-\beta}{p(N-\beta)}\Lambda (N,s,\gamma,\beta)^{\frac{N-\beta}{sp-\beta}}.
\end{align}
In fact, for all $ t \geqslant 0$, by functional $I$ definition we have that
\begin{align*}
    I(tu_2,tv_2) \leqslant \frac{t^p}{p}\|(u_2,v_2)\|^p - \frac{t^{p^*_s}}{p^*_s} Q^*(u,v)\coloneqq g_1(t).
\end{align*}
It is easy to see that
$    g_1'(t) = t^{p-1}\left[\|(u_2,v_2)\|^p - t^{p^*_s -p}Q^*(u,v) \right]$. So, $g_1(\Tilde{t}) =0$ for 
\begin{align*}
\Tilde{t}=\left(\frac{\|(u_2,v_2)\|^p}{Q^*(u,v)}\right)^{\frac{1}{p^*_s-p}},
\end{align*}
and this is a point of maximum for $g_1$. Besides of that, this maximum value is
\begin{align*}
    g_1(\Tilde{t}) 
     & =\left[\frac{1}{p}-\frac{1}{p^*_s}\right] \left(\frac{\|(u_2,v_2)\|^p}{Q^{*\frac{p}{p^*_s}}}\right)^{\frac{p^*_s}{p^*_s-p}}
     = \frac{sp+\theta-\beta}{p(N-\beta)} S^{*\frac{N-\beta}{sp+\theta-\beta}}.
\end{align*}
Therefore,
\begin{align}
\label{des:5.3achineses:sistemas}
    \sup\limits_{t \geqslant 0}I(tu_2,tv_2) &\leqslant \sup\limits_{t \geqslant 0} g_1(t) = \frac{sp+\theta-\beta}{p(N-\beta)} S^{*\frac{N-\beta}{sp+\theta-\beta}}.
\end{align}
The equality does not hold in~\eqref{des:5.3achineses:sistemas}, otherwise, we would have that $ \sup\limits_{t \geqslant 0}I(tu_2,tv_2) = \sup\limits_{t \geqslant 0} g_1(t)$. Let $t_1>0$, where $\sup\limits_{t \geqslant 0}I(tu_2,tv_2)$ is attained. We have
\begin{align*}
    g_1(t_1) -\frac{t_1^{2p^\sharp_s}}{2p^\sharp_s}Q^{\sharp}(u_2,v_2) = g_1(\Tilde{t})
\end{align*}
which means that $g_1(t_1)>g_1(\Tilde{t})$, since $t_1>0$. This contradicts the fact that $\Tilde{t}$ is the unique maximum point for $g_1(t)$. Thus
\begin{align}
    \label{des:5.4achineses:sistemas}
     \sup\limits_{t \geqslant 0}I(tu_2,tv_2) &< \sup\limits_{t \geqslant 0} g_1(t) = \frac{sp+\theta
     -\beta}{p(N-\beta)}S^{*\frac{N-\beta}{sp+\theta-\beta}}.
\end{align}
Therefore, $0<c<\frac{sp+\theta-\beta}{p(N-\beta)}S^{^*\frac{N-\beta}{sp+\theta-\beta}}$.

From the definition~\eqref{def:cestrela:sistemas} of $c^{\ast}$ and from inequalities~\eqref{des:5.4chin} and~\eqref{des:5.4achineses:sistemas}, we have
\begin{align*}
    0<c < c^* \coloneqq \min \biggl\{
\Bigl(\frac{1}{p}-\frac{1}{2p^{\sharp}_{s}}\Bigr)
    S^{\sharp\frac{2p^{\sharp}_{s}}{2p^{\sharp}_{s}-p}}, \Bigl(\frac{1}{p}-\frac{1}{p^{\ast}_{s}}\Bigr)
    S^{*\frac{p^{\ast}_{s}}{p^{\ast}_{s}-p}}\biggr\}.
\end{align*}
The lemma is proved.
\end{proof}

\begin{proof}[Proof of Proposition~\ref{prop:5.2chineses:sistemas}]
Follows immediately from Lemmas~\ref{lema1passodamontanha:sistemas} and~\ref{lema2passodamontanha:sistemas}.
\end{proof}

\section{Proof of Theorem~\ref{teo:sistemas}}
The existence of a solution will follow from the proof of the Theorem~\ref{teo:sistemas}.

\begin{proof}[Proof of Theorem~\ref{teo:sistemas}]

Suppose that $s \in (0,1), 0<\alpha, \beta< sp+\theta, \mu \in (0,N)$ and $\gamma < \gamma _H$.

    Let $\{(u_k,v_k)\}_{k \in \mathbb{N}}\subset W$ be a Palais-Smale sequence $(PS)_{c}$ as in Proposition~\ref{prop:5.2chineses:sistemas}, i.e.,
    \begin{align*}
        I(u_k,v_k) \to c, \; I'(u_k,v_k) \to 0 \qquad \textup{strongly in $W'$ as $k \to + \infty$}.
    \end{align*}
    Then 
    \begin{align}
        \label{eq:5.6chineses:sistemas}
        I(u_k,v_k)= \frac{1}{p}\|(u_k,v_k)\|^p - \frac{1}{2p^\sharp_s}Q^{\sharp}(u_k,u_k) -\frac{1}{p^*_s}Q^*(u_k,v_k) =c+o(1)
    \end{align}
    and 
    \begin{align}
        \label{eq:5.7chineses:sistemas}
         \langle I'(u_k,v_k),(u_k,v_k) \rangle =\|(u_k,v_k)\|^p -Q^{\sharp}(u_k,v_k)-Q^*(u_k,v_k) =o(1).
    \end{align}

    From~\eqref{eq:5.6chineses:sistemas} and~\eqref{eq:5.7chineses:sistemas}, if $2p^\sharp_s\geqslant p^*_s>p$, we have
\begin{align*}
    c+o(1)\|(u_k,v_k)\| & = I(u_k,v_k)-\frac{1}{p^*_s}\langle I'(u_k,v_k),(u_k,v_k)\rangle \\
    &=\frac{p^*_s -p}{p\cdot p^*_s} \|(u_k,v_k)\|^p + \left(\frac{1}{p^*_s}-\frac{1}{2p^\sharp_s}\right)Q^{\sharp}(u_k,v_k)\\
    & \geqslant  \frac{p^*_s -p}{p\cdot p^*_s} \|(u_k,v_k)\|^p.
\end{align*}

Again from~\eqref{eq:5.6chineses:sistemas} and~\eqref{eq:5.7chineses:sistemas}, if $ p^*_s> 2p^\sharp_s>p$, we have
\begin{align*}
    c+o(1)\|(u_k,v_k)\| & = I(u_k,v_k)-\frac{1}{2p^\sharp_s
 }\langle I'(u_k,v_k),(u_k,v_k)\rangle \\
    &=\frac{2p^\sharp_s
  -p}{p\cdot 2p^\sharp_s
 } \|(u_k,v_k)\|^p + \left(\frac{1}{2p^\sharp_s
 }-\frac{1}{p^*_s}\right)Q^*(u_k,v_k)\\
    & \geqslant \frac{2p^\sharp_s
  -p}{p\cdot 2p^\sharp_s
 } \|(u_k,v_k)\|^p.
\end{align*}
Thus, $\{(u_k,v_k)\}_{k \in \mathbb{N}} \subset W$ is a bounded sequence; so from the estimate~\eqref{eq:5.7chineses:sistemas} there exists a subsequence, still denoted by $\{(u_k,v_k)\}_{k\in\mathbb{N}}\subset W$, such that 
$\|(u_k,v_k)\|^p \to b, 
Q^*(u_k,v_k) \to d_1,
Q^{\sharp}(u_k,u_k)\to d_2,
$as $k\to+\infty$;
additionally, $b=d_1+d_2.$
By the definitions of $S^{\sharp}$ and $S^*$, we get
\begin{align*}
    d_1^{\frac{p}{p^*_s}} S^* \leqslant b = d_1+d_2, \qquad d_2^{\frac{1}{p^\sharp_s}}S^{\sharp}\leqslant b = d_1+d_2.
\end{align*}
From the first inequality we have 
$d_1^{\frac{p}{p^*_s}} S^* -d_1 \leqslant d_2 $, that is
\begin{align}
    \label{des:5.8achineses:sistemas}
     d_1^{\frac{p}{p^*_s}} \Big(S^* -d_1^{\frac{p^*_ -p}{p^*_s}}\Big) \leqslant d_2.
\end{align}
And from the second inequality we have 
$    d_2^{\frac{1}{p^\sharp_s}}S^{\sharp}-d_2\leqslant d_1 $, that is,
\begin{align}
    \label{des:5.8bchineses:sistemas}
    d_2^{\frac{1}{p^\sharp_s}}\Big(S^{\sharp}-d_2^{\frac{p^\sharp_s -1}{p^\sharp_s}}\Big) \leqslant d_1.
\end{align}

\begin{claim}
We have
\begin{align*}
    S^* -d_1^{\frac{p^*_s -p}{p^*_s}} >0, \qquad S^{\sharp} -d_2^{\frac{p^\sharp_s -1}{p^\sharp_s}} >0.
\end{align*}
\end{claim}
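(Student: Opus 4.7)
The plan is to argue by contradiction, exploiting the strict upper bound $c < c^{\ast}$ on the Palais--Smale level together with the energy decomposition already derived and the Sobolev-type lower bounds coming from the definitions of $S^{\ast}$ and $S^{\sharp}$.

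First, passing to the limit in the variational characterisations \eqref{Ssharp:sistemas} and \eqref{Sast:sistemas} along the bounded $(PS)_c$ sequence $\{(u_k,v_k)\}$, one immediately gets
\begin{align*}
b \;\geqslant\; S^{\ast}\, d_1^{\,p/p^{\ast}_s}, \qquad
b \;\geqslant\; S^{\sharp}\, d_2^{\,p/(2 p^{\sharp}_s)},
\end{align*}
which are the two displayed inequalities (with the correct normalisation coming from the definition of $S^{\sharp}$). Second, combining the energy identity $I(u_k,v_k)\to c$ with the fact that $\langle I'(u_k,v_k),(u_k,v_k)\rangle \to 0$, the equation $b=d_1+d_2$ yields the clean level identity
\begin{align*}
c \;=\; \Bigl(\tfrac{1}{p}-\tfrac{1}{p^{\ast}_s}\Bigr) d_1 \;+\; \Bigl(\tfrac{1}{p}-\tfrac{1}{2p^{\sharp}_s}\Bigr) d_2.
\end{align*}
Under the standing assumptions $2p^{\sharp}_s > p^{\ast}_s > p$, both coefficients are strictly positive, and $d_1,d_2\geqslant 0$ by definition of $Q^{\ast}$ and $Q^{\sharp}$.

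To prove the first half of the claim I would assume by contradiction that $S^{\ast}\leqslant d_1^{(p^{\ast}_s-p)/p^{\ast}_s}$, equivalently $d_1\geqslant S^{\ast\, p^{\ast}_s/(p^{\ast}_s-p)}$. Discarding the non-negative summand $\bigl(\tfrac{1}{p}-\tfrac{1}{2p^{\sharp}_s}\bigr) d_2$ in the level identity, one would then obtain
\begin{align*}
c \;\geqslant\; \Bigl(\tfrac{1}{p}-\tfrac{1}{p^{\ast}_s}\Bigr) d_1 \;\geqslant\; \Bigl(\tfrac{1}{p}-\tfrac{1}{p^{\ast}_s}\Bigr) S^{\ast\, p^{\ast}_s/(p^{\ast}_s-p)} \;\geqslant\; c^{\ast},
\end{align*}
by the very definition \eqref{def:cestrela:sistemas} of $c^{\ast}$, contradicting the strict inequality $c<c^{\ast}$ guaranteed by Proposition~\ref{prop:5.2chineses:sistemas}. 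The second half is handled symmetrically: assuming $S^{\sharp}\leqslant d_2^{(2p^{\sharp}_s-p)/(2p^{\sharp}_s)}$, raising to the reciprocal power gives $d_2\geqslant S^{\sharp\, 2p^{\sharp}_s/(2p^{\sharp}_s-p)}$; dropping the $d_1$ contribution in the level identity then produces $c\geqslant c^{\ast}$, again a contradiction.

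The only genuine subtlety is bookkeeping the exponents coming from the two sharp constants. The whole argument works because $c^{\ast}$ was defined as the minimum of exactly the two thresholds that appear in the two contradictions, and because both coefficients $\tfrac{1}{p}-\tfrac{1}{p^{\ast}_s}$ and $\tfrac{1}{p}-\tfrac{1}{2p^{\sharp}_s}$ are strictly positive, so that one may legitimately throw away the other term as non-negative rather than needing a delicate two-variable estimate. I do not expect any serious analytic obstacle beyond this arithmetic.
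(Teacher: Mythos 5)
Your proof is correct and follows essentially the same route as the paper: you use the level identity $c=\bigl(\tfrac{1}{p}-\tfrac{1}{p^{*}_{s}}\bigr)d_{1}+\bigl(\tfrac{1}{p}-\tfrac{1}{2p^{\sharp}_{s}}\bigr)d_{2}$ together with nonnegativity of both summands, the strict bound $c<c^{*}$, and the definition~\eqref{def:cestrela:sistemas} of $c^{*}$; the only cosmetic difference is that you phrase it by contradiction while the paper extracts the upper bounds $d_{1}\leqslant\bigl(\tfrac{1}{p}-\tfrac{1}{p^{*}_{s}}\bigr)^{-1}c$, $d_{2}\leqslant\bigl(\tfrac{1}{p}-\tfrac{1}{2p^{\sharp}_{s}}\bigr)^{-1}c$ directly. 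One point worth flagging: for the Choquard term you use the exponent $p/(2p^{\sharp}_{s})$ coming from the definition of $S^{\sharp}$, whence $S^{\sharp}-d_{2}^{(2p^{\sharp}_{s}-p)/(2p^{\sharp}_{s})}>0$, while the paper's displayed inequality $d_{2}^{1/p^{\sharp}_{s}}S^{\sharp}\leqslant b$ and the stated form $S^{\sharp}-d_{2}^{(p^{\sharp}_{s}-1)/p^{\sharp}_{s}}>0$ carry exponents that coincide with yours only when $p=2$; your version is the one consistent with \eqref{Ssharp:sistemas} and with the exponent $\tfrac{2p^{\sharp}_{s}}{2p^{\sharp}_{s}-p}$ appearing in $c^{*}$, so you have in effect proved the corrected statement.
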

\begin{proof}
In fact, since $c+o(1)\|(u_k,v_k)\|=I(u_k,v_k)-\frac{1}{p}\langle I'(u_k,v_k),(u_k,v_k)\rangle$, we have
\begin{align*}
    I(u_k,v_k)-\frac{1}{p}\langle I'(u_k,v_k),(u_k,v_k)\rangle 
    & =  \left(\frac{1}{p}-\frac{1}{2p^{\sharp}_{\mu}(\alpha, \theta)}\right)Q^{\sharp}(u_k,u_k)+ \left(\frac{1}{p}-\frac{1}{p^*_s}\right) Q^*(u_k,v_k) \\
    &=c+o(1)\|(u_k,v_k)\|.
\end{align*}
Passing to the limit as $k\to+\infty$, we get
\begin{align}
    \label{eq:5.9chineses:sistemas}
    \left(\frac{1}{p}-\frac{1}{p^*_s}\right)  d_1 + \left(\frac{1}{p}-\frac{1}{2p^{\sharp}_{\mu}(\alpha, \theta)}\right) d_2 =c;
\end{align}
so,
\begin{align*}
    d_1 \leqslant 
    \left(\frac{1}{p}-\frac{1}{p^*_s}\right)^{-1}c =
    \frac{p(N-\beta)}{sp+\theta -\beta}\, c, \qquad d_2 \leqslant 
   \left(\frac{1}{p}-\frac{1}{2p^{\sharp}_{\mu}(\alpha, \theta)}\right)^{-1} c =\frac{2pp^\sharp_s}{2p^\sharp_s -p}\, c.
\end{align*}
Using these upper bounds for $d_1, d_2$ and the fact $0<c<c^*$, we have
\begin{align*}
    S^* - d_1 ^{\frac{p^*_s-p}{p^*_s}}  
    &\geqslant  S^*
    -\left[\frac{p(N-\beta)}{sp+\theta-\beta}\,c\right]^{\frac{p^*_s-p}{p^*_s}} 
    > S^*
    -\left[\frac{p(N-\beta)}{sp+\theta-\beta}\,c^*\right]^{\frac{p^*_s-p}{p^*_s}}\\
    &\geqslant  S^* 
    -\left[\frac{p(N-\beta)}{sp+\theta-\beta}\cdot \frac{(sp+\theta-\beta)}{p(N-\beta)}\,S^{*\frac{N-\beta}{(sp+\theta-\beta)}}\right]^{\frac{p^*_s-p}{p^*_s}}
     = S^*-S^* =0.
\end{align*}
Similarly,
\begin{align*}
     S^{\sharp} -d_2^{\frac{p^\sharp_s-1}{p^\sharp_s}}  &  \geqslant S^{\sharp} - \left[\frac{2pp^\sharp_s}{2p^\sharp_s-p}\,c\right]^{\frac{p^\sharp_s-1}{p^\sharp_s}}  
     > S^{\sharp} - \left[\frac{2pp^\sharp_s}{2p^\sharp_s-p}\,c^*\right]^{\frac{p^\sharp_s-1}{p^\sharp_s}}\\
    &  \geqslant S^{\sharp}\left[\frac{2pp^\sharp_s}{(2p^\sharp_s-p)}\cdot \frac{(2p^\sharp_s-p)}{2pp^\sharp_s}\,S^{\sharp \frac{p^\sharp_s}{p^\sharp_s-1}}\right]^{\frac{p^\sharp_s-1}{p^\sharp_s}}
     = S^{\sharp} - S^{\sharp} =0.
\end{align*}    
This concludes the proof of the claim.
\end{proof}

Following up,
inequalities~\eqref{des:5.8achineses:sistemas} and~\eqref{des:5.8bchineses:sistemas} imply, respectively, that
\begin{align*}
    \left[S^* 
    -\left(\frac{p(N-\beta)}{sp+\theta-\beta}c\right)^{\frac{p^*_s-p}{p^*_s}}\right] d_1^{\frac{p}{p^*_s}} \leqslant \left[S^* - d_1 ^{\frac{p^*_s-p}{p^*_s}}\right]d_1^{\frac{p}{p^*_s}} \leqslant d_2
\end{align*}
and
\begin{align*}
    \left[S^{\sharp} - \left(\frac{2pp^\sharp_s}{2p^\sharp_s-p}c\right)^{\frac{p^\sharp_s-1}{p^\sharp_s}}\right]d_2^{\frac{1}{p^\sharp_s}} \leqslant \left[S^{\sharp} -d_2^{\frac{p^\sharp_s-1}{p^\sharp_s}} \right]d_2^{\frac{1}{p^\sharp_s}} \leqslant d_1.
\end{align*}

If $d_1=0$ and $d_2=0$, then~\eqref{eq:5.9chineses:sistemas} implies that $c=0$, which is in contradiction with $c>0$. Therefore, $d_1>0$ and $d_2>0$ and we can choose $\epsilon _0>0$ such that $d_1\geqslant \epsilon _0 >0$ and $d_2\geqslant \epsilon _0 >0$; moreover, there exists a $K\in\mathbb{N}$ such that 
\begin{align*}
    Q^*(u_k,v_k) > \frac{\epsilon _0}{2}, \qquad Q^{\sharp} (u_k,u_k)>\frac{\epsilon _0}{2}
\end{align*}
for every $k > K$.
The embeddings~\eqref{imersoes}, and the improved Sobolev inequality~\eqref{des:1.10chineses:bis} imply that there exist $C_1,C_2>0$ such that 
\begin{align*}
    0<C_2\leqslant \|u_k\|_{L_M^{p,N-sp-\theta+pr}(\mathbb{R}^N,|y|^{-pr})}\leqslant C_1,
\end{align*}
where $r=\frac{\alpha}{p^*_s(\alpha, \theta)}$. 
For any $k>K$, we may find 
$\lambda _k>0$ and 
$x_k \in \mathbb{R}^N$ such that  
\begin{align*}
    \lambda_k^{(N-sp-\theta+pr)-N}\int_{B_{\lambda _k}(x_k)}\frac{|u_k(y)|^p}{|y|^{pr}} \dd y > \|u_k\|^p_{L_M^{p,N-sp-\theta+pr}(\mathbb{R}^N,|y|^{-pr})} -\frac{C}{2k} \geqslant \Tilde{C}>0.
\end{align*}
Now we define the sequence 
$\{\Tilde{u}_{k}\}_{k\in\mathbb{N}}\subset \Dot{W}^{s,p}_{\theta}(\mathbb{R}^N)$ by
$\Tilde{u}_k(x)=\lambda_k^{\frac{N-sp-\theta}{p}}u_k(\lambda _k x)$ and the sequence $\{\Tilde{v}_{k}\}_{k\in\mathbb{N}}\subset \Dot{W}^{s,p}_{\theta}(\mathbb{R}^N)$ by
$\Tilde{v}_k(x)=\lambda_k^{\frac{N-sp-\theta}{p}}v_k(\lambda _k x)$. As we have already shown,  $\|\Tilde{u}_k\|=\|u_k\|\leqslant C$ and $\|\Tilde{v}_k\|=\|v_k\|\leqslant C$ for every $k\in\mathbb{N}$; so, there exist  $u \in \Dot{W}^{s,p}_{\theta}(\mathbb{R}^N)$ and $v \in \Dot{W}^{s,p}_{\theta}(\mathbb{R}^N)$ such that, after passage to subsequence, still denoted in the same way, 
\begin{align*}
    \Tilde{u}_k \rightharpoonup u 
    & \text{ in $\Dot{W}^{s,p}_{\theta}(\mathbb{R}^N)$}
    & \quad \textup{and} \quad
    \Tilde{v}_k \rightharpoonup v 
    & \text{ in $\Dot{W}^{s,p}_{\theta}(\mathbb{R}^N)$}
\end{align*}
as $k\to+\infty$.
In a fashion similar to the proof of~\cite[Proposition $2-1$]{assunccao2020fractional}, we can prove that $u\not\equiv 0$ and $v\not\equiv 0$.

In addition, the boundedness of the sequences $\{\Tilde{u}_{k}\}_{k\in\mathbb{N}}\subset \Dot{W}^{s,p}_{\theta}(\mathbb{R}^N)$ and $\{\Tilde{v}_{k}\}_{k\in\mathbb{N}}\subset \Dot{W}^{s,p}_{\theta}(\mathbb{R}^N)$ implies that the sequences $\{|\Tilde{u}_k|^{p^*_s-2}\Tilde{u}_k\}_{k\in\mathbb{N}}\subset L^{\frac{p^*_s}{p^*_s-1}}(\mathbb{R}^N, |x|^{-\beta})$ and $\{|\Tilde{v}_k|^{p^*_s-2}\Tilde{v}_k\}_{k\in\mathbb{N}}\subset L^{\frac{p^*_s}{p^*_s -1}}(\mathbb{R}^N, |x|^{-\beta})$ are bounded too. In fact, by embeddings~\eqref{imersoes}, we obtain 
\begin{align*}
    \int_{\mathbb{R}^N} \frac{\left||\Tilde{u}_k|^{p^*_s-2}\cdot \Tilde{u}_k\right|^{\frac{p^*_s}{p^*_s -1}}}{|x|^{\beta}} \dd x   =\int_{\mathbb{R}^N} \frac{|\Tilde{u}_k|^{p^*_s}}{|x|^{\beta}} \dd x <C.
\end{align*}
and
\begin{align*}
    \int_{\mathbb{R}^N} \frac{\left||\Tilde{v}_k|^{p^*_s-2}\cdot \Tilde{v}_k\right|^{\frac{p^*_s}{p^*_s-1}}}{|x|^{\beta}} \dd x   =\int_{\mathbb{R}^N} \frac{|\Tilde{v}_k|^{p^*_s}}{|x|^{\beta}} \dd x <C.
\end{align*}

Then, after passage to subsequence, still denoted in the same way, we deduce that
\begin{align}
    \label{conv:5.10chineses:sistemas1}
    |\Tilde{u}_k|^{p^*_s-2}\Tilde{u}_k\ \rightharpoonup |u|^{p^*_s-2}u\ \quad \textup{in $L^{\frac{p^*_s}{p^*_s -1}}(\mathbb{R}^N, |x|^{-\beta})$}
\end{align}
and
\begin{align}
    \label{conv:5.10chineses:sistemas}
    |\Tilde{v}_k|^{p^*_s-2}\Tilde{v}_k\ \rightharpoonup |v|^{p^*_s-2}v\ \quad \textup{in $L^{\frac{p^*_s}{p^*_s -1}}(\mathbb{R}^N, |x|^{-\beta})$}
\end{align}
as $k\to+\infty$.

For any $\phi_1, \phi_2 \in L^{p^*_s(\alpha, \theta)}(\mathbb{R}^N, |x|^{-\alpha})$, Lemma $2.5$ in~\cite{assunccao2023fractional} implies that
\begin{align}
    \label{lim:5.11chineses:sistemas1}
    \begin{split}
    \lim\limits_{k \to \infty} \int_{\mathbb{R}^N} \left[I_{\mu}\ast F_{\alpha}(\cdot , \Tilde{u}_k)\right](x) f_{\alpha}(x,\Tilde{u}_k)\phi_1 (x) \dd x \\
    = \int _{\mathbb{R}^N} \left[I_{\mu}\ast F_{\alpha}(\cdot , u)\right](x) f_{\alpha}(x,u)\phi_1 (x) \dd x
    \end{split}
\end{align}
and
\begin{align}
    \label{lim:5.11chineses:sistemas}
    \begin{split}
    \lim\limits_{k \to \infty} \int_{\mathbb{R}^N} \left[I_{\mu}\ast F_{\alpha}(\cdot , \Tilde{v}_k)\right](x) f_{\alpha}(x,\Tilde{v}_k)\phi_2 (x) \dd x \\
    = \int _{\mathbb{R}^N} \left[I_{\mu}\ast F_{\alpha}(\cdot , v)\right](x) f_{\alpha}(x,v)\phi_2 (x) \dd x. 
    \end{split}
\end{align}
Since $\Dot{W}^{s,p}_{\theta}(\mathbb{R}^N) \hookrightarrow L^{p^*_s(\alpha, \theta)}(\mathbb{R}^N,|x|^{-\alpha})$, \eqref{lim:5.11chineses:sistemas1} and \eqref{lim:5.11chineses:sistemas} hold for any $\phi_1,\phi_2 \in \Dot{W}^{s,p}_{\theta}(\mathbb{R}^N)$.

Finally, we need to check that $\{\Tilde{u}_k\}_{k \in \mathbb{N}}\subset\Dot{W}^{s,p}_{\theta}(\mathbb{R}^N)$ and $\{\Tilde{v}_k\}_{k \in \mathbb{N}}\subset\Dot{W}^{s,p}_{\theta}(\mathbb{R}^N)$ are also a $(PS)_{c}$ sequence for the functional $I$ at energy level $c$. Do to this, we note that the norms in $L^{p^*_s(\alpha, \theta)}(\mathbb{R}^N,|x|^{-\alpha})$ are invariant under the special dilatation $\Tilde{u}_k=\lambda _k^{\frac{N-sp-\theta}{p}}u_k(\lambda _k x)$ and $\Tilde{v}_k=\lambda _k^{\frac{N-sp-\theta}{p}}v_k(\lambda _k x)$. In fact
\begin{align*}
    \|\Tilde{u}_k\|^{p^*_s(\alpha, \theta)}_{L^{p^*_s(\alpha, \theta)}} 
 = \int_{\mathbb{R}^N} \frac{\lambda_k^{\frac{N-sp-\theta}{p}p^*_s(\alpha, \theta)}|u_k(\lambda_k x )|^{p^*_s(\alpha, \theta)})}{|x|^{\alpha}} \dd x = \int_{\mathbb{R}^N} \frac {|u_k(\Bar{x})|^{p^*_s(\alpha, \theta)}}{|\Bar{x}|^{\alpha}} \dd \Bar{x} =  \|u_k\|^{p^*_s(\alpha, \theta)}_{L^{p^*_s(\alpha, \theta)}}
\end{align*}
and
\begin{align*}
    \|\Tilde{v}_k\|^{p^*_s(\beta, \theta)}_{L^{p^*_s(\beta, \theta)}} 
 = \int_{\mathbb{R}^N} \frac{\lambda_k^{\frac{N-sp-\theta}{p}p^*_s(\beta, \theta)}|v_k(\lambda_k x )|^{p^*_s(\beta, \theta)})}{|x|^{\beta}} \dd x = \int_{\mathbb{R}^N} \frac {|v_k(\Bar{x})|^{p^*_s(\beta, \theta)}}{|\Bar{x}|^{\beta}} \dd \Bar{x} =  \|v_k\|^{p^*_s(\beta, \theta)}_{L^{p^*_s(\beta, \theta)}}.
\end{align*}
Additionally, we have
\begin{align*}
    \int_{\mathbb{R}^N} \frac{\eta 
    |\Tilde{u}_k(x)| ^a|\Tilde{v}_k(x)| ^b}{|x|^{\beta}} \dd x 
    & = \int_{\mathbb{R}^N} \frac{\eta \lambda _k ^{\frac{N-sp-\theta}{p}(a+b)}|{u}_k(\lambda_k x)|^a|v_k(\lambda_k x)|^b}{|x|^\beta} \dd x \\
    &= \int_{\mathbb{R}^N}\frac{\eta |u_k (\Bar{x})|^a|v_k (\Bar{x})|^b}{|\Bar{x}|^{\beta}} \dd \Bar{x}.
\end{align*}
Thus, we have
\begin{align*}
    \lim\limits_{k \to + \infty} I(\Tilde{u}_k, \Tilde{v}_k) =c.
\end{align*}
Moreover, for all $ \phi_1, \phi_2 \in \Dot{W}^{s,p}_{\theta}(\mathbb{R}^N)$, we have $\phi_{1,k} (x) = \lambda_k^{\frac{N-sp-\theta}{p}}\phi_1 \left(x/\lambda _k\right) \in \Dot{W}^{s,p}_{\theta}(\mathbb{R}^N)$ and $\phi_{2,k} (x) = \lambda_k^{\frac{N-sp-\theta}{p}}\phi_2 \left(x/\lambda _k\right) \in \Dot{W}^{s,p}_{\theta}(\mathbb{R}^N)$. From $I'(u_k, v_k) \to 0$ in $W'$ as $k\to+\infty$, we can deduce that 
\begin{align*}
    \lim\limits_{k \to + \infty} \langle I'(\Tilde{u}_k, \Tilde{v}_k), (\phi_1, \phi_2)\rangle = \lim \limits_{k \to + \infty} \langle I'(u_k, v_k), (\phi_1, \phi_2)\rangle =0.
\end{align*}

Thus~\eqref{conv:5.10chineses:sistemas1},~\eqref{conv:5.10chineses:sistemas},~\eqref{lim:5.11chineses:sistemas1} and~\eqref{lim:5.11chineses:sistemas} lead to
\begin{align*}
    \langle I'(u,v),(\phi_1, \phi _2) \rangle = \lim \limits_{k \to + \infty} \langle I'(\Tilde{u}_k, \Tilde{v}_k), (\phi_1, \phi _2) \rangle =0.
\end{align*}
Hence $(u,v)$ is a nontrivial weak solution to problem~\ref{prob:sistemas}.
\end{proof}

\begin{proof}[Proof of Theorem~\ref{teo:sistemas02}]
    The proof follows the same steps of the proof of Theorem~\ref{teo:sistemas}. Here we only remark that for problem~\eqref{prob:sistemas02} with a Hardy potential and double Sobolev type nonlinearities we have to define the value below which we can recover the compactness of the Palais-Smale sequences by
    \begin{align*}
c^* \coloneqq \min_{k\in\{1,2\}}  \biggl\{
\Bigl(\frac{1}{p}-\frac{1}{p^{\ast}_{s}}\Bigr)
    S^{*}(N, s, \gamma, \beta_k)^{\frac{p^{\ast}_{s}(\beta_k,\theta)}{p^{\ast}_{s}(\beta_k,\theta)-p}}\biggr\}.
\end{align*}
Similarly, for problem~\eqref{prob:sistemas03} with a Hardy potential and double Choquard type nonlinearities we have to define the corresponding number by
\begin{align*}
c^* \coloneqq \min_{k\in\{1,2\}}  \biggl\{
\Bigl(\frac{1}{p}-\frac{1}{2p^{\sharp}_{s}(\delta, \theta, \mu_k)}\Bigr)
    S^{\sharp}(N, s, \gamma, \beta_k)^{\frac{2p^{\sharp}_{s}(\delta, \theta, \mu_k)}{2p^{\sharp}_{s}(\delta, \theta, \mu_k)-p}}\biggr\}.
\end{align*}
The details are omitted.
\end{proof}

\bibliographystyle{IEEEtranS}
\bibliography{bibliografia}

\end{document}